\newif\ifhs
\newtheorem{theorem}{Theorem}[section]
\newtheorem{proposition}[theorem]{Proposition}
\newtheorem{corollary}[theorem]{Corollary}
\newtheorem{lemma}[theorem]{Lemma}
\theoremstyle{definition}
\newtheorem{definition}[theorem]{Definition}
\newtheorem{notation}[theorem]{Notation}
\theoremstyle{remark}
\newtheorem{remark}[theorem]{Remark}
\title{Operadic categories and 2-Segal sets}
\author{Philip Hackney}
\address{Department of Mathematics, University of Louisiana at Lafayette, USA}
\email{philip@phck.net}
\subjclass[2020]{55U10, 18F20, 18M60, 18A32}
\urladdr{http://phck.net}
\thanks{This work was supported by a grant from the Simons Foundation (\#850849). 
This material is partially based upon work supported by the National Science Foundation under Grant No. DMS-1928930 while the author participated in a program supported by the Mathematical Sciences Research Institute. 
The program was held in the summer of 2022 in partnership with the Universidad Nacional Autónoma de México.}
\keywords{operadic category, 2-Segal object, decomposition space, factorization system, distributive law}
\newcommand{\sset}{\mathsf{sSet}}
\newcommand{\cat}{\mathsf{Cat}}
\newcommand{\op}{{\operatorname{op}}}
\newcommand{\oprm}{\op}
\newcommand{\sdrm}{{\operatorname{sd}}}
\newcommand{\actrm}{\textup{act}}
\newcommand{\intrm}{\textup{int}}
\newcommand{\eqrm}{\textup{eq}}
\newcommand{\udec}{\textup{dec}_{\top}}
\newcommand{\ldec}{\textup{dec}_{\perp}}
\newcommand{\sd}{\textup{sd}}
\newcommand{\iudecop}{\mathscr{D}^{\top}} 
\newcommand{\ildec}{\mathscr{D}_{\bot}}
\newcommand{\oc}{\mathcal{C}}
\newcommand{\fc}{\mathcal{F}}
\newcommand{\expandC}{\bar\oc}
\newcommand{\ec}{\mathcal{E}}
\newcommand{\id}{\textup{id}}
\newcommand{\lr}[1]{\langle #1 \rangle}
\newcommand{\ract}{\varrightarrow\mapsfromchar}
\tikzset{
  act /.tip = >|
}
\begin{document}

\ifhs\maketitle\fi

\begin{abstract}
If $X$ is a 2-Segal set, then the edgewise subdivision of $X$ admits a factorization system coming from upper and lower d\'ecalage.
Using the correspondence between 2-Segal sets and unary operadic categories satisfying the blow-up axiom, the edgewise subdivision of $X$ is interpreted as an enlargement of the associated operadic category, where fiber inclusions have been adjoined.
\end{abstract}
\unless\ifhs\maketitle\fi

\noindent This work is dedicated to Michael Batanin and Martin Markl in celebration of their 60\textsuperscript{th} birthdays.

\section{Introduction}
The operadic categories of Batanin and Markl, which first appeared in \cite{BataninMarkl:OCDDC}, were created to describe operad-like structures.
Namely, each operadic category $\oc$ has an associated notion of `operad' and each such `operad' has its own associated notion of `algebra.'
Operadic come equipped with a cardinality functor sending each object to a finite set, along with an abstract notion of `fibers' of a map, living over the usual fibers of the cardinality of the map.
In contrast to other approaches (e.g. Barwick's operator categories \cite{Barwick:OCHO}), a fiber $\mathfrak{f}_i$ of a map $x\to y$ does not need to be a subobject of $x$, and in fact it may not even be in the same connected component as $x$.
This provides an additional layer of flexibility concerning the types of operad-like structures that fit in the framework.

The theory of operadic categories has been substantially developed in a series of three papers \cite{BataninMarkl:KDOC,BataninMarkl:OCNEKD,BataninMarklObradovic:MMGH}. 
There have also been two papers that have recast or expanded certain aspects.
In \cite{Lack:OCSMCC}, Lack gives a generalization of operadic category, and defines a skew-monoidal category of collections $\mathrm{Coll}_{\oc}$ associated to each generalized operadic category $\oc$. 
Then operads over $\oc$ are described as monoids in $\mathrm{Coll}_{\oc}$.
In \cite{GarnerKockWeber:OCD}, operadic categories and an expanded notion of `lax-operadic categories' are described as algebras for a modified d\'ecalage monad. 

The starting point of this paper is the following source of operadic categories, originally appearing as \cite[Proposition 3.2]{Berger:MCO}.
\begin{theorem}[Berger]\label{berger theorem}
The dual of the active part of a rigid hypermoment category has a canonical structure of operadic category. 
\end{theorem}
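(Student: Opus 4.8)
The plan is to equip $\oc := \mathbb{M}_{\actrm}^{\op}$, the opposite of the wide subcategory of active maps, with the three pieces of data of a Batanin--Markl operadic category --- a cardinality functor $|{-}|\colon \oc \to \mathsf{Fin}$, chosen local terminal objects, and a choice of fibers --- and then to verify their axioms, extracting every piece of structure from the active--inert factorization system. For an object $c$ I set $|c|$ to be the set $\mathbb{M}_{\intrm}(\star, c)$ of inert points of $c$; because $\mathbb{M}$ is rigid this is a genuine set carrying no nontrivial automorphisms, which is exactly what will keep $\oc$ inside the world of ordinary (non-groupoidal) operadic categories. Given a morphism $f\colon S \to T$ of $\oc$, that is, an active map $\bar f\colon T \to S$ of $\mathbb{M}$, I define the fiber over a point $i\colon \star \to T$ by taking the active--inert factorization $\star \to f^{-1}(i) \to S$ of the composite $\bar f \circ i$, with first leg active and second leg inert. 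The active leg exists and is unique because $\star$ is active-initial, so $f^{-1}(i)$ is well defined, and the cardinality function $|f|\colon |S| \to |T|$ is then defined by sending each inert point of $S$ to the unique point of $T$ into whose fiber inclusion it factors.

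First I would dispatch the bookkeeping axioms. Active-initiality of $\star$ says precisely that $\star$ is terminal in $\oc$ and that $|\star|$ is a one-element set, so $\star$ (and, in each connected component, any object of cardinality one) serves as a local terminal object. The identity $\id_T$ is the identity active map, whose factorization over a point is trivial, so $\id_T^{-1}(i) = \star$ as required, and the map to a local terminal has the whole source as its single fiber since an active map is its own active--inert factorization. The cardinality-of-fibers identity $|f^{-1}(i)| = |f|^{-1}(i)$ amounts to the assertion that the inert points of $S$ are partitioned by the fibers of $\bar f$; this is the one place where I would invoke genuine input, namely orthogonality of the factorization system together with the hypermoment axioms and rigidity, which force each inert point $\star \to S$ to factor through exactly one fiber inclusion $f^{-1}(i) \to S$. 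The same fact shows simultaneously that $|f|$ is a well-defined function and that it is functorial.

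The hard part will be the associativity (compatibility-with-composition) axiom. Given composable $f\colon S \to T$ and $g\colon T \to R$ in $\oc$, hence active maps $\bar g\colon R \to T$ and $\bar f\colon T \to S$, and a point $k \in |R|$, I must produce a canonical comparison morphism $(gf)^{-1}(k) \to g^{-1}(k)$ in $\oc$ whose fiber over each point $i \in |g^{-1}(k)|$ recovers $f^{-1}(i)$, and then show that these comparisons cohere. The morphism arises by factoring the composite of $\bar f$ with the inert inclusion $g^{-1}(k) \to T$: orthogonality yields a unique active map $g^{-1}(k) \to (gf)^{-1}(k)$ in $\mathbb{M}$ filling the evident square, and rigidity is exactly what makes it unique. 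One then identifies its target as $(gf)^{-1}(k)$, and its fibers as the $f^{-1}(i)$, by pasting two active--inert factorizations of one composite $\star \to S$ and appealing to uniqueness; the associativity and unit coherences reduce in the same way to uniqueness of factorizations and closure of the active and inert classes under composition, so no genuinely new input is needed once existence and uniqueness of the comparison maps are in hand.

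Finally I would attend to the features special to the hypermoment (as opposed to plain moment) setting. I must check that the objects $f^{-1}(i)$ manufactured by the factorization are again points-carrying objects of $\mathbb{M}$, that the resulting $|{-}|$ lands in $\mathsf{Fin}$, and that the chosen local terminals assemble compatibly with the comparison maps of the previous paragraph. The overall upshot is that every operadic-category axiom is a shadow of a property of the active--inert factorization system on $\mathbb{M}$, and that rigidity is precisely the hypothesis eliminating the automorphisms which would otherwise push the construction out of \Cref{berger theorem} and into the strictly larger setting of groupoidal operadic categories.
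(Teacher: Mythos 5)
The paper offers no proof of \cref{berger theorem}: it is quoted from \cite{Berger:MCO} (Proposition 3.2 there), so your attempt can only be compared with Berger's own construction. In outline you have reproduced that construction correctly: morphisms of $\oc \coloneqq \mathbb{M}_{\actrm}^{\op}$ are active maps $\bar{f} \colon T \to S$ read backwards, the fiber over a point of $T$ is the middle object of the active--inert factorization of the composite of the corresponding unit inclusion with $\bar{f}$, cardinalities are read off from unit inclusions, and the comparison maps for functoriality and associativity come from orthogonality. Nevertheless, two genuine gaps remain.

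First, there is no single object $\star$. In a hypermoment category the units generally form an infinite family of pairwise non-isomorphic objects --- in the dendroidal category $\Omega$, the kind of example motivating this paper, the units are all the corollas --- so $|c| \coloneqq \mathbb{M}_{\intrm}(\star,c)$ for one fixed $\star$ records only the points of a single arity. The cardinality must instead be the set underlying $\gamma(c)$, with the hypermoment axiom furnishing an essentially unique unit inclusion over each of its elements; no single representing object exists. For the same reason, the ``active-initiality of $\star$'' you invoke to produce local terminal objects of $\oc$ is not among the hypermoment axioms: it is the separate unitality condition of \cref{def hypermoment}, which must either be added as a hypothesis or located inside Berger's formulation of hypermoment category.

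Second, you misstate both the content and the role of rigidity, and the point where it is genuinely needed is exactly the step your sketch leaves open. Rigidity does not forbid nontrivial automorphisms (Segal's category $\Gamma$, with $\gamma = \mathrm{id}$, is rigid, yet $\operatorname{Aut}(\underline{n}) = \Sigma_n$), and it has nothing to do with uniqueness of the diagonal filler in an orthogonality square, which is automatic in any orthogonal factorization system. The real difficulty is strictness: an orthogonal factorization system determines the factorization $u \to f^{-1}(i) \rightarrowtail S$ only up to isomorphism, whereas every axiom of an operadic category is an equality --- the fiber of an identity \emph{is} a chosen local terminal object, the fiber assignment \emph{is} a functor, and compatibility with composition holds on the nose. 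Consequently ``pasting two active--inert factorizations of one composite and appealing to uniqueness,'' as in your associativity paragraph, does not parse: factorizations are not unique. This is precisely what rigidity is for: since every isomorphism is an automorphism, the middle objects of any two factorizations of the same map coincide (not merely up to isomorphism), and since automorphisms act trivially on the left on active maps out of units, the active legs $u \to f^{-1}(i)$ coincide as well; these strict identifications are the starting point for making the fiber assignment an honest functor satisfying the axioms on the nose. Until this mechanism, together with the coherence of the remaining choices of inert legs, is spelled out, the part you yourself flag as the hard part has not been proved.
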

\noindent We are interested in the reverse: when is it possible for an operadic category to be canonically expanded into a hypermoment category?
This question was originally posed by Berger.

In this paper we restrict attention to unary operadic categories, that is, those whose cardinality functor is constant at a one-point set.
These are still very interesting, as can be seen in \cite{BataninMarkl:OOCBC} and \cite[Remark 7.2]{Lack:OCSMCC}.
\begin{theorem}[\cref{sec expanded}]
If $\oc$ is a unary operadic category satisfying the blow-up axiom, then there is an expanded category $\expandC$ having the same objects, as well as maps $\mathfrak{f} \leftarrow x$ whenever $x \to y$ is a map in $\oc$ with fiber $\mathfrak{f}$.
\end{theorem}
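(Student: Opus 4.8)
The plan is to construct $\expandC$ as the edgewise subdivision of the $2$-Segal set attached to $\oc$. First I would transport $\oc$ across the correspondence recalled above to a $2$-Segal set $X$, normalized so that $X_1$ is the object set of $\oc$ and so that a map $x \to y$ of $\oc$ with fiber $\mathfrak{f}$ is recorded by a $2$-simplex exhibiting the base edge $y$ as built from $x$ and its fiber $\mathfrak{f}$. The blow-up axiom is what makes this passage legitimate: it is precisely the condition ensuring that the simplicial set so obtained satisfies the $2$-Segal conditions, so that $X$ is a bona fide $2$-Segal set.

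I then define $\expandC$ to be the category whose nerve is the edgewise subdivision $\sd X$; this is legitimate because the edgewise subdivision of a $2$-Segal set is $1$-Segal, hence is the nerve of a category. As $(\sd X)_0 = X_1$, the objects of $\expandC$ coincide with the objects of $\oc$, which is the first assertion. A morphism of $\expandC$ is a $3$-simplex of $X$, whose source and target are recovered by the two subdivision face maps as, respectively, the inner edge $e_{12}$ spanned by its two middle vertices and the outer edge $e_{03}$ spanned by its two extreme vertices (in the conventions we adopt). The maps of $\oc$ sit inside $\expandC$ in this guise, so that $\expandC$ is genuinely an enlargement of $\oc$.

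Finally I would produce the fiber inclusions using the factorization system on $\sd X$ coming from upper d\'ecalage $\udec X$ and lower d\'ecalage $\ldec X$. Factoring the morphism of $\expandC$ that encodes a given $x \to y$ splits it as $x \to c \to y$, the left leg lying in the class arising from lower d\'ecalage and the right leg in the class from upper d\'ecalage; the intermediate object $c$ is one of the two inner edges of the underlying $3$-simplex. The crux --- and the step I expect to be the main obstacle --- is to verify that this intermediate object is exactly the operadic fiber, $c = \mathfrak{f}$, so that the left leg is the desired map $x \to \mathfrak{f}$. This is the point at which the operadic and simplicial descriptions of fibers must be reconciled, and it is again governed by the blow-up axiom, which is what forces the d\'ecalage factorization of an encoded operadic map to reconstruct its fiber. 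Granting this identification, that the assignment is functorial and respects composition follows formally from the simplicial identities and the $1$-Segal property of $\sd X$.
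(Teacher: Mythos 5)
Your outline up through the construction of the category $\sd(X)$ is in fact the paper's own route: form the 2-Segal set $X = U\oc$ associated to $\oc$ (the blow-up axiom accounts exactly for the \emph{lower} 2-Segal condition, the upper one being automatic for any unary operadic category, cf.\ \cref{thm udec}), conclude that $\sd(X)$ is a category by \cref{boors thm}, observe $\sd(X)_0 = X_1$ is the object set of $\oc$, and invoke the d\'ecalage factorization system of \cref{prop canonical fact}. One convention issue already causes trouble: with source $= e_{12}$ and target $= e_{03}$, a map $g\colon x \to y$ of $\oc$ embeds into $\sd(X)$ as the degenerate simplex $s_2(x \xrightarrow{g} y \to u)$, which is a morphism from $y$ to $x$; that is, $\oc$ sits inside $\sd(X)$ \emph{contravariantly}, as $\iudecop(X)\cong\udec(X)^\op$ (this is why the paper takes $\expandC = \sd(U\oc)^\op$). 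So there is no morphism ``from $x$ to $y$ encoding $g$'' to factor in the first place.

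More fundamentally, the step you call the crux is not just an obstacle --- it is false, and it is not where the fiber inclusions come from. Under any convention, the embedded copy of a map $g$ of $\oc$ lies entirely within one class of the strict factorization system ($\iudecop(X)\cong\oc^\op$ in $\sd(X)$, resp.\ the class $\oc$ after passing to opposites), so its unique strict factorization is the trivial one, with intermediate object $x$ itself --- never the fiber $\mathfrak{f}$ --- and no appeal to the blow-up axiom changes this. The maps $\mathfrak{f} \leftarrow x$ are not obtained by factoring anything: up to the same opposite, they \emph{are} the other class of the factorization system, $\ildec(X)\cong\ldec(X)$. The reconciliation of the simplicial and operadic notions of fiber therefore happens one level down, in the construction of the undecking itself: a morphism of $\ldec(X)$ is a 2-simplex $(x \xrightarrow{g} y \to u) \in X_2$, whose source in $\ldec(X)$ is its top face $d_2^X(x \to y \to u)$, and in $U\oc$ the top face maps are \emph{defined} via the fiber functor, so this source is $\varphi_0(g)=\mathfrak{f}$ while the target is $d_1(x\to y\to u)=x$. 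That unwinding, together with the operadic description of identities and composition in $\ldec(X)$ (which genuinely uses blow-up, via unique lifting along the discrete opfibration $\varphi$, cf.\ \cref{prop fiber composition}), is the actual content of \cref{sec fiber inclusions}. A small symptom of the same directional confusion: the 2-simplex of $X$ recording $g\colon x\to y$ has \emph{long} edge $x$, decomposed into the short edges $\mathfrak{f}$ and $y$; it does not exhibit $y$ as built from $x$ and $\mathfrak{f}$.
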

It is expected that the expanded category $\expandC$ will be an example of a distributive operadic category in the sense of Batanin.
The (weak) blow-up axiom is an essential axiom introduced in \cite{BataninMarkl:OCNEKD}, which allows one to transport activity happening in the fibers of a map $x\to y$ back to $x$.
It plays a key role in the development of Koszul duality theory for operadic categories \cite{BataninMarkl:KDOC}.

The expanded category $\expandC$ is equipped with a \emph{strict factorization system} $(\fc^\op, \oc)$, where $\fc$ is a category of formal inclusions of fibers (\cref{sec fiber inclusions}).
Thus $\expandC$ has an \emph{orthogonal} factorization system, whose right class extends $\oc$ by isomorphisms in $\fc^\op$.
Such an orthogonal factorization system is part of two pieces of data that comprise a hypermoment category.
In \cref{sec hypermoment} we explore the question about whether or not $\expandC^\op$ is a hypermoment category. 
We provide sufficient conditions to guarantee this is the case in \cref{prop reverse berger}.

Our fundamental approach in this paper utilizes simplicial machinery, and in particular the 2-Segal spaces of Dyckerhoff–Kapranov \cite{DyckerhoffKapranov:HSS} / decomposition spaces of G\'{a}lvez-Carrillo--Kock--Tonks \cite{GKT1}.
Ultimately, the preceding theorem (together with the strict factorization system $(\fc^\op, \oc)$) is exactly the same as the following.

\begin{theorem}[\cref{sec canonical}]
If $X$ is a 2-Segal set, then there is a strict factorization system on the edgewise subdivision of $X$ whose left class is isomorphic to $\udec(X)^\op$ and whose right class is isomorphic to $\ldec(X)$.
\end{theorem}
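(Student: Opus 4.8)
The plan is to realize all three simplicial sets as nerves of categories on the common object set $X_1$ (the edges of $X$), and then to exhibit the left and right classes as wide subcategories whose unique-factorization property is literally a $2$-Segal bijection. First I would record the combinatorics of the defining coface operators. Since $\sd(X)_n = X_{2n+1} = X_{[n]^\op\star[n]}$ while $\udec(X)_n = X_{[n]\star[0]}$ and $\ldec(X)_n = X_{[0]\star[n]}$, the inclusions of the two join factors $[n]^\op\hookrightarrow[n]^\op\star[n]\hookleftarrow[n]$, together with their order-preserving retractions, induce for every simplicial set $X$ a span of simplicial maps
$$\udec(X)^\op \xleftarrow{\ \pi_L\ } \sd(X) \xrightarrow{\ \pi_R\ } \ldec(X)$$
and sections $\iota_L,\iota_R$ going the other way; the $\op$ on the left is forced by the reversal of the first join factor. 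On $1$-simplices $\pi_L,\pi_R\colon X_3\to X_2$ are $\mu\mapsto d_3\mu$ and $\mu\mapsto d_1\mu$, while $\iota_L,\iota_R$ reinsert a degenerate half. I would check that $\iota_L,\iota_R$ are monomorphisms with $\pi_L\iota_L=\id$ and $\pi_R\iota_R=\id$, so that $\udec(X)^\op$ and $\ldec(X)$ embed as sub-simplicial sets of $\sd(X)$ sharing its object set.

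Next I would verify that, when $X$ is $2$-Segal, all three simplicial sets are $1$-Segal, hence nerves of categories $\mathcal L\subseteq\sd(X)\supseteq\mathcal R$. For the two décalages this is the décalage theorem. For $\sd(X)$ the Segal map at level $n$ unwinds, under $[n]^\op\star[n]\cong[2n+1]$, into an iterate of the $2$-Segal bijections of $X$; I would set this up by induction, reducing the Segal condition on $\sd(X)$ to the polygon-triangulation form of the $2$-Segal condition on $X$.

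The heart of the argument is then a single bijection. Every morphism of $\sd(X)$ is a class $\mu\in X_3$ whose source is the inner edge $\{1,2\}$ and whose target is the outer edge $\{0,3\}$, and its candidate factorization through the intermediate edge $\{0,2\}$ is $\iota_R(d_1\mu)\circ\iota_L(d_3\mu)$. That this factorization exists and is unique is exactly the assertion that
$$(d_3,d_1)\colon X_3 \xrightarrow{\ \cong\ } X_2 \times_{X_1} X_2$$
is a bijection, the fibre product being formed over the maps $d_1$ on the first factor and $d_2$ on the second — one of the two $2$-Segal conditions. Promoting this degree-$1$ statement to a genuine strict factorization system then requires that $\mathcal L$ and $\mathcal R$ be closed under composition and that the factorization be associative; associativity is automatic from strict uniqueness, while closure and the compatibility across degrees are governed by the remaining $2$-Segal bijections. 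Concretely, this identifies $\sd(X)\cong\mathcal L\bowtie\mathcal R$ as a matched pair, the distributive law being supplied by the $2$-Segal structure.

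I expect the main obstacle to be precisely this last promotion. The degree-$1$ unique factorization is immediate from one $2$-Segal bijection, but organizing the vertex-bookkeeping of the reversed join $[n]^\op\star[n]$ so that the higher Segal maps of $\sd(X)$ match the iterated $2$-Segal bijections of $X$ — and so that the two embedded décalages are genuinely \emph{closed} subcategories carrying the stated $\op$ — is where the real work lies. The payoff is that this recovers, on the simplicial side, the strict factorization system $(\fc^\op,\oc)$ on the expanded category $\expandC$, under the correspondences $\ldec(X)\cong\oc$, $\udec(X)\cong\fc$, and $\sd(X)\cong\expandC$.
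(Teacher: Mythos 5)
Your plan follows the same route as the paper's proof: the same two wide subcategories (the images of $s_\top^n$ and $s_\bot^n$, i.e.\ $\iudecop(X)$ and $\ildec(X)$), the same candidate factorization $\iota_R(d_1\mu)\circ\iota_L(d_3\mu)$, and the same lower 2-Segal square. But there is a genuine gap at your central claim. The bijection $(d_3,d_1)\colon X_3\xrightarrow{\ \cong\ }X_2\times_{X_1}X_2$ says only that a twisted arrow $\mu$ is determined by, and determines, the pair of faces $(d_3\mu,d_1\mu)$; it does \emph{not} say that the composite of $\iota_L(d_3\mu)$ and $\iota_R(d_1\mu)$ computed in the category $\sd(X)$ equals $\mu$, nor that every factorization of $\mu$ through $L$ and $R$ arises from such a pair. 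Composition in $\sd(X)$ is encoded in $\sd(X)_2=X_5$, not in $X_3$: a priori the composite $\iota_R(d_1\mu)\circ\iota_L(d_3\mu)$ could be some other morphism with the same source and target as $\mu$. So ``exists and is unique \dots is exactly the assertion that $(d_3,d_1)$ is a bijection'' is an overstatement, and the work you defer to ``promotion''/``bookkeeping'' is precisely the content of the proof. The paper supplies it in one short argument: combine the lower 2-Segal pullback with the pullback $(d_2^\sdrm,d_0^\sdrm)\colon X_5\cong X_3\times_{X_1}X_3$ (which holds because $\sd(X)$ is a category, via the edgewise subdivision criterion) to conclude that the 2-simplices of $\sd(X)$ whose outer faces lie in $L_1\times_{Y_0}R_1$ are exactly those of the form $s_0s_2\mu$, uniquely; then the simplicial identity $d_1d_4\,s_0s_2=\id$ (i.e.\ $s_0s_2$ is a section of $d_1^\sdrm$) gives both existence and uniqueness of the factorization at once. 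Your plan is correct in spirit but incomplete until this step is carried out; your instinct that it is governed by the remaining 2-Segal data is right, since that is what the subdivision criterion encodes.

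Two smaller inaccuracies. First, the claimed span of simplicial maps $\pi_L,\pi_R$ does not exist: a natural transformation $A_\top\Rightarrow Q$ or $A_\bot\Rightarrow Q$ would require a choice of element of $[n]$ natural in $[n]\in\Delta$, and there is none (the join-factor inclusions instead induce maps $\sd(X)\to X^\op$ and $\sd(X)\to X$, with components $X_{2n+1}\to X_n$, which are not the d\'ecalages). The face operators $d_3,d_1$ you write down are only degreewise retractions, not maps of simplicial sets; this is harmless, since all you need is that $\iota_L=\sigma_\top^*$ and $\iota_R=\sigma_\bot^*$ are monomorphisms, which follows from degreewise injectivity. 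Second, your closing dictionary is reversed: in the paper $\oc\cong\udec(X)$, $\fc\cong\ldec(X)$, and $\expandC\cong\sd(U\oc)^\op$, so the strict factorization system on $\sd(X)$ itself is $(\oc^\op,\fc)$, while $(\fc^\op,\oc)$ lives on its opposite $\expandC$.
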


Here, $\udec(X)$ is the upper d\'ecalage of the simplicial set $X$, which is obtained by shifting dimension and discarding the top face and degeneracy maps (and similarly for the lower d\'ecalage $\ldec(X)$).
Meanwhile, the edgewise subdivision $\sd(X)$ only contains odd-dimensional simplices of $X$.
When $X$ is 2-Segal, these three simplicial sets are all (nerves of) categories (see \cref{sec 2-Segal}), so it is sensible to discuss such a strict factorization system.

The connection between the two theorems utilizes results of Garner--Kock--Weber from \cite{GarnerKockWeber:OCD} which say that upper 2-Segal sets are the same thing as unary operadic categories, and 2-Segal sets are the same thing as unary operadic categories satisfying the blow-up axiom.
These equivalences are restrictions of the upper d\'ecalage functor $\udec \colon \sset \to \sset$, which takes upper 2-Segal sets to operadic categories.
As we rely heavily on these correspondences, we provide details in \cref{sec unary as UD} and \cref{app upper dec}.
This is not the only connection between simplicial objects and operadic categories, as there is also the upcoming work of Batanin--Kock--Weber described in \cite{Kock:OCSS}.

\begin{remark}[Motivation]
Our interest in reversibility of \cref{berger theorem} comes from graph categories (see \cite{Hackney:SCGO} for an overview), generalizing the category of trees of Moerdijk and Weiss \cite{MoerdijkWeiss:DS}, which are suitable for describing operad-like structures as well as homotopy-coherent versions.
These graph categories are all hypermoment categories, and several admit rigid variants (see \cite[A.5]{Berger:MCO}). 
This yields important operadic categories from \cite{BataninMarkl:OCNEKD,BataninMarklObradovic:MMGH} via the above construction.
This process throws away inclusions of graphs, which are a key part of the structure of a graph category. 
Subgraphs of graphs are still visible in the associated operadic category, but only as abstract fibers of contractions of subgraphs, and there is no map in the operadic category from the subgraph to the graph representing the inclusion.
Reversing \cref{berger theorem} by formally adjoining fiber inclusions would provide a path to new constructions of graph categories.
Note, however, that graph categories lie far outside the unary case.
\end{remark}

\subsection*{Outline}
The first two sections following the introduction are preliminary matters concerning simplicial objects, where we recall the d\'ecalage and edgewise subdivision constructions, and then relate them to each other and to 2-Segal sets.
\Cref{sec canonical} gives the canonical strict factorization system on the edgewise subdivision of a 2-Segal set.
In \cref{sec unary opcat} we turn to unary operadic categories, recalling their definition, while \cref{sec unary as UD} brings in an alternate characterization.
The category of fiber inclusions arrives in \cref{sec fiber inclusions}, and we characterize isomorphisms in \cref{sec fiber isos}.
Finally, in \cref{sec expanded} we explain the expanded category associated to a unary operadic category satisfying the blow-up axiom, and \cref{sec hypermoment} explores the likelihood of the opposite of the expanded category forming a hypermoment category.

\subsection*{Notation}
We utilize the standard notation $d_i$, $s_i$ for face and degeneracy operators in a simplicial set $X \in \sset$.
Additionally, we write $d_\top \coloneqq d_n \colon X_n \to X_{n-1}$, $s_\top \coloneqq s_n \colon X_n \to X_{n+1}$ for the top face and degeneracy operators, and $d_\bot \coloneqq d_0$, $s_\bot \coloneqq s_0$ for the bottom ones.
In this paper, small categories are not distinguished from their associated simplicial sets and we consider the fully-faithful nerve functor $\cat \to \sset$ as a replete subcategory inclusion.
In other words, we take $\cat$ to be the full subcategory of $\sset$ on those $X$ which satisfy the Grothendieck--Segal condition: the maps
\[ \begin{tikzcd}[row sep=0, column sep=3cm]
X_2 \rar["{d_2, d_0}"] & X_1 \times_{X_0} X_1 \\ 
X_3 \rar["{d_2 d_3, d_0 d_3, d_0 d_0}"] & X_1 \times_{X_0} X_1 \times_{X_0} X_1 \\[-0.2cm] 
\vdots & \vdots \\
X_n \rar[swap]{\prod\limits_{i=1}^n d_\top^{n-i} d_\bot^{i-1}} & X_1 \times_{X_0} X_1 \times_{X_0} \dots  \times_{X_0} X_1
\end{tikzcd} \]
are bijections for every $n \geq 2$.
Alternatively, this holds if and only if the squares
\[ \begin{tikzcd}
X_{n+1} \rar{d_\bot} \dar[swap]{d_\top} & X_n \dar{d_\top} \\
X_n \rar[swap]{d_\bot} & X_{n-1}
\end{tikzcd} \]
are pullbacks for all $n\geq 1$ (see \cite[Lemma 2.10]{GKT1}).
If $X \in \cat$, we will often write $n$-simplices as length $n$ strings of morphisms, leaving the isomorphisms above implicit.

\section{D\'ecalage and edgewise subdivision}\label{sec dec esd}
In this preliminary section, we recall several constructions associated to simplicial objects.
If $X$ is a simplicial set, its upper d\'ecalage $\udec(X)$ is obtained by shifting dimensions down ($\udec(X)_n = X_{n+1}$), and ignoring the top face and degeneracy maps $d_\top, s_\top$.
Similarly, the lower d\'ecalage $\ldec(X)$ shifts dimension down and forgets the bottom face and degeneracy maps $d_\bot = d_0, s_\bot = s_0$.
When $X$ is a category, $\udec(X)$ is the sum of its slices $\udec(X) \cong \sum_{x\in X_0} X_{/x}$ and $\ldec(X)$ is the sum of its coslices.
There is also the edgewise subdivision $\sd(X)$ of a simplicial set (see \cite{BOORS:ESC,KockSpivak:DSST} and \cref{sd X picture}), which recovers the twisted arrow category $\mathrm{tw}(X)$ when $X$ is itself a category.
Moreover, it includes both the upper and lower d\'ecalage inside of it.

\begin{figure}
\[
\begin{tikzcd}
\bullet  \\
\bullet \ar[u, "g"]
\end{tikzcd}
\qquad \qquad
\begin{tikzcd}
&\bullet \\[-0.6cm]
\bullet \ar[ur, "h"] \\
\bullet \ar[u, "g"]\\[-0.6cm]
&\bullet \ar[ul, "f"] \ar[uuu, "k"']
\end{tikzcd}
\qquad \qquad
\begin{tikzcd} 
& & \bullet\\[-0.6cm]
&\bullet \ar[ur,"\ell"] \\[-0.6cm]
\bullet \ar[ur, "h"] \\
\bullet \ar[u, "g"]\\[-0.6cm]
&\bullet \ar[ul, "f"] \ar[uuu, "k"'] \\[-0.6cm]
& & \bullet \ar[ul,"j"] \ar[uuuuu,"m"']
\end{tikzcd}
\]
\caption{In $\sd(X)$: a zero simplex $g$, a one-simplex $g\to k$, and a two-simplex $g\to k \to m$}\label{sd X picture}
\end{figure}
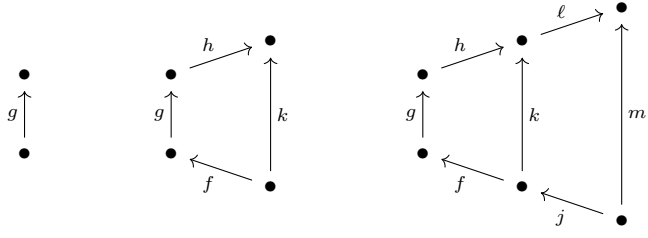

Temporarily write $\dagger \colon \Delta \to \Delta$ for the identity-on-objects automorphism which sends $f \colon [n] \to [m]$ to the function $f^\dagger \colon [n] \to [m]$ with $f^\dagger(t) = m-f(n-t)$.
The opposite of a simplicial set is defined by precomposition with $\dagger$.
Consider the following three endofunctors of $\Delta$:
\begin{align*}
A_\bot \colon  [n] &\mapsto [0]^\oprm \star [n] \cong [n+1] \\
Q \colon  [n] &\mapsto [n]^\oprm \star [n] \cong [2n+1] \\
A_\top \colon  [n] &\mapsto [n]^\oprm \star [0] \cong [n+1]
\end{align*}
Concretely, on $f\colon [n] \to [m]$ these are given by the formulas
\begin{align*}
A_\bot(f)(t) &= \begin{cases}
0 & t=0 \\
f(t-1)+1 & t \geq 1
\end{cases}
&
A_\top(f)(t) &= \begin{cases}
f^\dagger(t) = m - f(n-t) & t \leq n \\
m+1 & t=n+1
\end{cases} \\
& & Q(f)(t) &= \begin{cases}
f^\dagger(t) = m - f(n-t) & t \leq n \\
m+1 + f(t-n-1) & n+1 \leq t.
\end{cases}
\end{align*}
In particular, $Q(f)^\dagger = Q(f)$.

The edgewise subdivision functor $\sd \colon \sset \to \sset$ is defined by precomposition with $Q^\op \colon \Delta^\op \to \Delta^\op$, that is $\sd (X) \coloneqq Q^* X = X \circ Q^\op$.
We have $\sd(X)_n = X_{2n+1}$, and the face maps $\sd(X)_2 \to \sd(X)_1$ and $\sd(X)_1 \to \sd(X)_0$ in low degrees are given in terms of those on $X$ as follows:
\begin{align*}
d_0^\sdrm &= d_2d_3 \colon X_5 \to X_3 & d_0^\sdrm &= d_1d_2 \colon X_3 \to X_1 \\
d_1^\sdrm &= d_1d_4 \colon X_5 \to X_3 & d_1^\sdrm &= d_0d_3 \colon X_3 \to X_1 \\
d_2^\sdrm &= d_0d_5 \colon X_5 \to X_3. 
\end{align*}
Since $\dagger \circ Q = Q$, we have $\sd(X^\op) = \sd(X)$.

Likewise, the lower d\'ecalage functor is defined by $\ldec(X) \coloneqq A_\bot^* X$.
We have $\ldec(X)_n = X_{n+1}$; face and degeneracy maps are shifted by one from those on $X$.
Upper d\'ecalage is pullback along the functor $[n] \mapsto [n]\star [0]$.
There are canonical augmentations $d_\bot \colon \ldec(X) \to X$ and $d_\top \colon \udec(X) \to X$ coming from the unused bottom and top face maps, respectively.

There are natural transformations $A_\bot \Leftarrow Q \Rightarrow A_\top$ which we call $\sigma_\bot$ and $\sigma_\top$, respectively, coming from the unique maps $[0]^\oprm \leftarrow [n]^\oprm$ and $[n] \to [0]$.
Notice also that $A_\top$ is a composite
\[ \begin{tikzcd}[row sep=small]
& \Delta \ar[dr, bend left=20, "(-)\star {[0]}"] & \\
\Delta \ar[ur, bend left=20,"\dagger"] \ar[dr, bend right=20,"A_\bot"'] & & \Delta \\
& \Delta \ar[ur, bend right=20,"\dagger"']
\end{tikzcd} \]
Consequently, for a simplicial set $X$, we have $A_\top^* X = \udec (X)^\op = \ldec (X^\op)$.
The natural transformations $\sigma_\bot$ and $\sigma_\top$ induce maps $A_\bot^* X \to Q^* X \leftarrow A_\top^* X$, which in simplicial degree $n$ are as follows.
\[ \begin{tikzcd}[row sep=tiny]
A_\bot^* X \rar{\sigma_\bot^*} & Q^* X  & \lar[swap]{\sigma_\top^*} A_\top^* X \\
X_{n+1} \rar{s_\bot^n} & X_{2n+1} & \lar[swap]{s_\top^n} X_{n+1}
\end{tikzcd} \]

\begin{notation}
We write $\iudecop (X) \subset \sd (X)$ for the image of $\sigma_\top^*$ and $\ildec (X) \subset \sd (X)$ for the image of $\sigma_\bot^*$.
As simplicial sets, we have $\iudecop (X) \cong \udec (X)^\op$ and $\ildec (X) \cong \ldec (X)$. 
\end{notation}

\section{2-Segal sets / discrete decomposition spaces}\label{sec 2-Segal}

2-Segal spaces, a certain type of simplicial space,  were introduced by Dyckerhoff--Kapranov in \cite{DyckerhoffKapranov:HSS} for applications in representation theory and geometry, and decomposition spaces were introduced in \cite{GKT1} for applications in combinatorics.
These notions turn out to be the same: see \cite{Feller_et_al:E2SSU} for details.
We consider the discrete case, where instead of simplicial spaces we have simplicial sets.
These 2-Segal sets may be considered as a generalization of category where composition is multi-valued \cite[3.3]{DyckerhoffKapranov:HSS}.

\begin{definition}
A simplicial set $X$ is said to be \emph{upper 2-Segal} (resp.\ \emph{lower 2-Segal}) if the square below left (resp.\ below right) is a pullback for all $0 < i < n$.
\[ \begin{tikzcd}
X_{n+1} \rar{d_0} \dar[swap]{d_{i+1}} \ar[dr, phantom, "\lrcorner" very near start] & X_n \dar{d_i} 
& & 
X_{n+1} \rar{d_{n+1}} \dar[swap]{d_i} \ar[dr, phantom, "\lrcorner" very near start] & X_n \dar{d_i}
\\
X_n \rar[swap]{d_0} & X_{n-1}
& & 
X_n \rar[swap]{d_n} & X_{n-1}
\end{tikzcd} \]
A \emph{2-Segal set} is a simplicial set which is both upper and lower 2-Segal.
\end{definition}

Notice that $X$ is upper 2-Segal if and only if $X^\op$ is lower 2-Segal.
By the main theorem of \cite{Feller_et_al:E2SSU}, 2-Segal sets are the same thing as discrete decomposition spaces \cite{GKT1}.

We have the following important \emph{path space criterion}.
It appears in this form as a special case of \cite[Proposition 2.7]{poguntke}; see also \cite[Theorem 6.3.2]{DyckerhoffKapranov:HSS} and \cite[Theorem 4.10]{GKT1}.

\begin{proposition}\label{lem upper 2-segal char}
A simplicial set $X$ is upper 2-Segal if and only if $\udec(X)$ is a category.
Dually, $X$ is lower 2-Segal if and only if $\ldec(X)$ is a category.
\end{proposition}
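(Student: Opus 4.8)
The plan is to make the face maps of the upper d\'ecalage completely explicit and then read the Grothendieck--Segal condition for $\udec(X)$ directly off the upper 2-Segal squares of $X$. Writing $D \coloneqq \udec(X)$, I would first record that $D_m = X_{m+1}$ and that the face operators of $D$ are those of $X$ with the top one discarded: at level $m$ one has $d_i^{D} = d_i$ for $0 \le i \le m$, so that $d_\bot^{D} = d_0$ while $d_\top^{D} = d_m$ rather than $d_{m+1}$. Everything below is bookkeeping with these indices, so I would set this dictionary up with care.

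Next I would observe that the Grothendieck--Segal squares for $D$ from the Notation section---required to be pullbacks for all $m \ge 1$---translate, via the dictionary, into precisely the upper 2-Segal squares of $X$ in the extreme case $i = n-1$. Concretely, writing $n = m+1$, the square
\[ \begin{tikzcd}
D_{m+1} \rar{d_\bot} \dar[swap]{d_\top} & D_m \dar{d_\top} \\
D_m \rar[swap]{d_\bot} & D_{m-1}
\end{tikzcd} \]
becomes
\[ \begin{tikzcd}
X_{n+1} \rar{d_0} \dar[swap]{d_n} & X_n \dar{d_{n-1}} \\
X_n \rar[swap]{d_0} & X_{n-1}
\end{tikzcd} \]
which is the upper 2-Segal square at $(n,n-1)$. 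This already settles one implication: if $X$ is upper 2-Segal, then in particular its squares at $i = n-1$ are pullbacks, so $D$ is a category.

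For the converse I would recover the remaining squares of $X$---those with $0 < i < n-1$---from the fact that a category is itself (upper) 2-Segal. Translating the upper 2-Segal square of $D$ at index $(N,i)$ with $0 < i < N$, and putting $n = N+1$, reproduces exactly the square of $X$ at $(n,i)$ with $0 < i < n-1$; together with the extreme squares above, this covers the whole range $0 < i < n$, so $X$ is upper 2-Segal. The hard part is precisely this input that every category is 2-Segal, since the Grothendieck--Segal condition on its own delivers only the extreme squares. I would either cite this standard fact or give the short direct check: for a nerve, a 2-Segal square is a pullback because at a non-extremal vertex the two bounding faces retain enough of the chain---the uncomposed edge on one side, the split edges on the other---to reconstruct the simplex uniquely.

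Finally the dual statement follows formally, with no new work. Since $X$ is lower 2-Segal if and only if $X^\op$ is upper 2-Segal, and since the identification $A_\top^* X = \udec(X)^\op = \ldec(X^\op)$ recorded earlier gives $\ldec(X) = \udec(X^\op)^\op$, I conclude that $\ldec(X)$ is a category iff $\udec(X^\op)$ is a category, iff $X^\op$ is upper 2-Segal, iff $X$ is lower 2-Segal.
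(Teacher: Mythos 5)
Your proof is correct, and its key step takes a genuinely different route from the paper's. The first half coincides: like the paper, you set up the index dictionary identifying the Grothendieck--Segal squares of $\udec(X)$ with the extreme upper 2-Segal squares of $X$ (those with $i = n-1$), which immediately gives the implication ``upper 2-Segal $\Rightarrow$ category,'' and your handling of the dual statement via $\ldec(X) = \udec(X^\op)^\op$ is the paper's as well. The difference is in the converse. The paper outsources it entirely: it cites \cite[Lemma 3.6]{GKT1}, a purely simplicial reduction lemma saying that the extreme squares being pullbacks already forces \emph{all} upper 2-Segal squares of $X$ to be pullbacks. You instead bootstrap through the d\'ecalage: once $\udec(X)$ is a category, you invoke the standard fact that nerves of categories are 2-Segal, and observe that the upper 2-Segal squares of $\udec(X)$ at $(N,i)$ with $0 < i < N$ translate, under the same dictionary, into exactly the non-extreme squares of $X$ at $(N+1,i)$ --- precisely the ones the category condition misses. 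This is a valid and rather elegant substitution; in effect you re-prove the needed instance of \cite[Lemma 3.6]{GKT1} rather than citing it, so your argument is more self-contained, at the price of needing the nerve-is-2-Segal fact (which you correctly flag as the real content; your sketch of it is the right one, the essential point being that for $0 < i$ the face $d_{i+1}$ retains the initial edge that $d_0$ discards, so a compatible pair of faces reconstructs the chain of arrows uniquely). The paper's version is shorter and keeps all the combinatorics in the cited reference; yours makes the proposition nearly self-proving from first principles.
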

\begin{proof}
This is simply a slight re-reading of the proof of Proposition 4.9 of \cite{GKT1}.
Namely, if $Y= \udec(X)$ and $n\geq 2$, then the two squares below are equal.
\[ \begin{tikzcd}
Y_{n+1} \rar{d_\bot} \dar[swap]{d_\top} & Y_n \dar{d_\top} & & X_{n+2} \rar{d_0} \dar[swap]{d_{n+1}} & X_{n+1} \dar{d_n}  \\
Y_n \rar[swap]{d_\bot} & Y_{n-1} & & X_{n+1} \rar[swap]{d_0} & X_n
\end{tikzcd} \]
By \cite[Lemma 3.6]{GKT1}, the square on the right is a pullback for all $n\geq 1$ if and only if $X$ is upper 2-Segal.
But $Y = \udec(X)$ is a category if and only if the square on the left is a pullback for all $n \geq 1$.
The dual statement uses $\udec(X^\op) = \ldec(X)^\op$.
\end{proof}

The following is a special case of the edgewise subdivision criterion from \cite{BOORS:ESC}.
\begin{proposition}\label{boors thm}
A simplicial set $X$ is 2-Segal if and only if $\sd(X)$ is a category. \qed
\end{proposition}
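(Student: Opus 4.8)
The plan is to test the Grothendieck--Segal condition on $\sd(X)$ directly and translate it into the two-sided 2-Segal conditions on $X$. First I would record the top and bottom face operators of $\sd(X)$ in terms of those of $X$. Extrapolating from the low-dimensional formulas already computed, the injections $Q(\delta_0),Q(\delta_n)\colon [2n-1]\to[2n+1]$ omit respectively the two \emph{middle} values $n,n+1$ and the two \emph{extreme} values $0,2n+1$, so that
\[ d_\bot^{\sdrm} = d_n d_{n+1}\colon X_{2n+1}\to X_{2n-1}, \qquad d_\top^{\sdrm} = d_0 d_{2n+1}\colon X_{2n+1}\to X_{2n-1}. \]
Hence, using the characterization of $\cat\subset\sset$ by pullback squares, $\sd(X)\in\cat$ if and only if for every $n\geq 1$ the square
\[ \begin{tikzcd}
X_{2n+3} \rar{d_{n+1}d_{n+2}} \dar[swap]{d_0 d_{2n+3}} & X_{2n+1} \dar{d_0 d_{2n+1}} \\
X_{2n+1} \rar[swap]{d_n d_{n+1}} & X_{2n-1}
\end{tikzcd} \]
cut out by $d_\bot^{\sdrm}$ and $d_\top^{\sdrm}$ is a pullback.

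For the forward implication I would factor this square as a $2\times 2$ grid, removing the two middle vertices $n+1,n+2$ one at a time along the horizontal direction and the two outer vertices $0,2n+3$ along the vertical direction. Each of the four elementary squares pits a single inner-vertex deletion against a single boundary deletion ($d_\bot$ or $d_\top$), so after the relevant simplicial identity it is the transpose of either an upper or a lower 2-Segal square; an index count identifies them as the upper 2-Segal squares at $i=n,\,n+1$ and the lower 2-Segal squares at $i=n+1,\,n+2$, each satisfying $0<i<m$ (for the appropriate top dimension $m$) once $n\geq 1$. If $X$ is 2-Segal these are pullbacks, and since pullbacks paste, so is the outer square; thus $\sd(X)\in\cat$.

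The reverse implication is the crux, and pasting runs the wrong way: the grid realizes middle-index 2-Segal squares as \emph{factors} of the $\sd$-squares, but with no a priori supply of pullbacks there is nothing to cancel against, and as $n$ varies one recovers only a single middle index in each dimension rather than the whole family $0<i<n$. I would instead route the argument through \cref{lem upper 2-segal char} and the décalage subcomplexes $\iudecop(X)\cong\udec(X)^{\op}$ and $\ildec(X)\cong\ldec(X)$ of $\sd(X)$. Granting that $\sd(X)$ is a category, it suffices to show that the simplicial subsets $\iudecop(X)$ and $\ildec(X)$ are closed under the composition of $\sd(X)$: each is then a subcategory, hence a category, and \cref{lem upper 2-segal char} promotes this to the assertion that $X$ is both upper and lower 2-Segal. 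Unwinding the level-two Segal isomorphism $\sd(X)_2\cong \sd(X)_1\times_{\sd(X)_0}\sd(X)_1$, i.e. $X_5\cong X_3\times_{X_1}X_3$, turns this closure into a finite statement: the unique $\sd(X)$-composite of two $s_\top$-degenerate (resp. $s_\bot$-degenerate) edges is again degenerate of the same type.

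I expect this closure verification to be the main obstacle. It is in fact equivalent to the reverse implication itself---were $X$ to fail upper 2-Segality, then $\udec(X)$, hence $\iudecop(X)$, would not be a category and closure would break---so it genuinely requires analyzing how the degeneracies $s_\top,s_\bot$ interact with the multi-valued composition encoded by the 2-Segal property. This is precisely the combinatorics that \cite{BOORS:ESC} handle in general, of which the present statement is the 2-Segal case. An alternative would be to import the polygonal-decomposition calculus of \cite{DyckerhoffKapranov:HSS,GKT1}, under which the Segal maps $X_{2n+1}\to X_3\times_{X_1}\cdots\times_{X_1}X_3$ of $\sd(X)$ are identified with the membranes of a fixed zig-zag triangulation, and 2-Segality is exactly the bijectivity of all such membrane maps.
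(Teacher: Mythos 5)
The paper offers no argument for this proposition at all: it is recorded as a special case of the edgewise subdivision criterion of \cite{BOORS:ESC}, which is why it carries a tombstone rather than a proof. Measured against that, your forward implication is genuine added value, and it is correct: the formulas $d_\bot^{\sdrm}=d_nd_{n+1}$ and $d_\top^{\sdrm}=d_0d_{2n+1}$ on $\sd(X)_n=X_{2n+1}$ are right, and your $2\times 2$ grid really does decompose the $(d_\top,d_\bot)$-square of $\sd(X)$ into transposes of the lower 2-Segal squares at $i=n+2,\,n+1$ and the upper 2-Segal squares at $i=n+1,\,n$, all with admissible indices once $n\geq 1$, so pasting of pullbacks gives that $\sd(X)$ is a category whenever $X$ is 2-Segal.

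The reverse implication, however, has a genuine gap, and it is not only the one you flagged. Your reduction of the closure claim to ``a finite statement'' in degree two is invalid as stated: for a simplicial subset $Y$ of a nerve $Z$, closure of $Y_1$ under composition---even the stronger requirement that the unique $2$-simplex of $Z$ filling any composable pair of edges of $Y$ lies in $Y_2$---does not make $Y$ a nerve. For instance, the $2$-skeleton of $Z$ (with all degeneracies) contains every $2$-simplex, hence is closed under composition in both senses, yet fails the Segal condition in degree $3$ as soon as $Z$ has a nondegenerate $3$-simplex. What \cref{lem upper 2-segal char} actually needs is that the simplicial sets $\iudecop(X)\cong\udec(X)^\op$ and $\ildec(X)\cong\ldec(X)$ satisfy the Segal condition in \emph{every} degree, i.e.\ that they contain the $\sd(X)$-filler of every composable string of their edges; your outline supplies no induction carrying the degree-two statement upward, and such an induction cannot be purely formal---it must use that these subsets are the images of the iterated degeneracies $s_\top^n$ and $s_\bot^n$. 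Beyond this, you concede that even the degree-two closure is ``equivalent to the reverse implication itself'' and defer it (and likewise the membrane/triangulation alternative, where the same bootstrapping from zig-zag triangulations to all triangulations is the real content) to \cite{BOORS:ESC}. Since the paper's entire proof is that citation, your deferral lands exactly where the paper already stands; but as a self-contained argument, the backward direction remains unproven.
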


These two results reveal a close relationship between $\sd(X)$ and its subobjects $\iudecop (X) \cong \udec(X)^\oprm$ and $\ildec(X) \cong \ldec(X)$.
In the next section, we strengthen this connection.

\section{The canonical factorization system}\label{sec canonical}

We now turn to the canonical (strict) factorization system on the edgewise subdivision of a 2-Segal set. 
Recall that a strict factorization system (see \cite{Grandis:WSEMCC}) on a category $\oc$ is a pair $(L,R)$ of wide subcategories of $\oc$ such that each morphism factors uniquely as $r\circ \ell$ with $\ell \in L$ and $r\in R$.
In particular, $L\cap R$ consists only of identities.
Strict factorization systems are typically not examples of the usual notion of (orthogonal) factorization systems, where $L$ and $R$ are each required to contain all the isomorphisms, and factorizations are unique up to unique isomorphism.

\Cref{prop canonical fact} generalizes an obvious strict factorization system on the twisted arrow category of a category.
\[
\begin{tikzcd}
&\bullet \\[-0.6cm]
\bullet \ar[ur, "h"] \\
\bullet \ar[u, "g"]\\[-0.6cm]
&\bullet \ar[ul, "f"] \ar[uuu, "k"']
\end{tikzcd}
\qquad \qquad
\begin{tikzcd} 
& & \bullet\\[-0.6cm]
&\bullet \ar[ur,"h"] \\[-0.6cm]
\bullet \ar[ur, "\id"] \\
\bullet \ar[u, "g"]\\[-0.6cm]
&\bullet \ar[ul, "f"] \ar[uuu, "gf"'] \\[-0.6cm]
& & \bullet \ar[ul,"\id"] \ar[uuuuu,"k"']
\end{tikzcd}
\]
Namely, each morphism in the twisted arrow category factors uniquely as one whose `forwards map' is an identity, followed by one whose `backwards map' is an identity.

\begin{proposition}\label{prop canonical fact}
If $X \in \sset$ is 2-Segal, then $(\iudecop(X), \ildec(X))$ is a strict factorization system on $\sd (X)$.
\end{proposition}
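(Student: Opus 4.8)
The plan is to reduce the statement to the bijectivity of a single composition map and then identify that map with (the inverse of) one of the pullback bijections defining the lower 2-Segal condition.

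First I would fix coordinates. Since $\sigma_\top^*$ and $\sigma_\bot^*$ are given in each degree by the (injective) degeneracy operators $s_\top^n, s_\bot^n$, the subobjects $\iudecop(X)$ and $\ildec(X)$ agree with $\sd(X)$ on objects ($\sd(X)_0 = X_1$), while on morphisms they are the images of $s_\top = s_2$ and $s_\bot = s_0$, both $X_2 \to X_3 = \sd(X)_1$. So I would write a morphism of $\sd(X)$ as $\phi \in X_3$, a left factor as $\ell = s_2 a$, and a right factor as $r = s_0 b$ with $a, b \in X_2$, and record the source/target operators (source $= d_0 d_3$, target $= d_1 d_2$ on $X_3$) together with their restrictions: via the simplicial identities one finds that $\ell = s_2 a$ has target $d_1 a$ and $r = s_0 b$ has source $d_2 b$.

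Next I would dispatch the wide-subcategory requirement. Because $X$ is 2-Segal, both $\udec(X)$ and $\ldec(X)$ are categories by \cref{lem upper 2-segal char}, so $\iudecop(X) \cong \udec(X)^{\op}$ and $\ildec(X) \cong \ldec(X)$ satisfy the Segal condition. I would then invoke the elementary fact that a simplicial subset $Y$ of a category which satisfies the Segal condition and contains all objects is automatically the nerve of a wide subcategory: identities lie in $Y$ since it is closed under degeneracies, and a composite lies in $Y$ because the unique Segal filler in $Y_2$ maps to the unique Segal filler in $\sd(X)_2$, carrying its middle face along. By \cref{boors thm} the ambient $\sd(X)$ is a category, so this applies to both classes.

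The heart of the matter is unique factorization, which is exactly the bijectivity of the composition map $\mu \colon \iudecop(X)_1 \times_{\sd(X)_0} \ildec(X)_1 \to \sd(X)_1$. Using the source/target computations, the domain is the pullback $X_2 \times_{X_1} X_2$ of $d_1, d_2 \colon X_2 \to X_1$, and $\mu$ sends $(a,b)$ to the composite $r \circ \ell = d_1^{\sdrm}\Sigma$, where $\Sigma \in \sd(X)_2 = X_5$ is the unique Segal filler with outer faces $s_2 a$ and $s_0 b$. I would then note that the lower 2-Segal pullback square at $(n,i) = (2,1)$ says precisely that $(d_1, d_3)\colon X_3 \to X_2 \times_{X_1} X_2$ is a bijection, onto the same subset of $X_2 \times X_2$ (after swapping the two factors). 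The plan is to show $\mu$ is its inverse: given $(a,b)$, let $\psi \in X_3$ be the unique $3$-simplex with $d_3 \psi = a$, $d_1 \psi = b$, and verify that the degenerate $5$-simplex $\Sigma = s_3 s_0 \psi$ satisfies $d_0 d_5 \Sigma = s_2 a$, $d_2 d_3 \Sigma = s_0 b$, and $d_1 d_4 \Sigma = \psi$. By uniqueness of Segal fillers in $\sd(X)$ this $\Sigma$ computes $r \circ \ell$, so $\mu(a,b) = \psi$; thus $\mu$ agrees with $(d_1,d_3)^{-1}$ up to the coordinate swap and is a bijection.

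The main obstacle is this final identification, and it has two aspects. The routine-but-delicate part is checking the three face identities for $\Sigma = s_3 s_0 \psi$ by bookkeeping with the simplicial identities, matched against the degree-two $\sd$-face formulas $d_0^{\sdrm} = d_2 d_3$, $d_1^{\sdrm} = d_1 d_4$, $d_2^{\sdrm} = d_0 d_5$. The conceptually important part is recognizing that \emph{uniqueness} of the factorization genuinely uses the lower 2-Segal condition and not merely that $\sd(X)$ is a category: the Segal condition on $\sd(X)$ recovers $\Sigma$ from its outer faces $(\ell, r)$, whereas here the middle face $\phi = d_1^{\sdrm}\Sigma$ is held fixed and the pair $(\ell, r)$ must be reconstructed — which is exactly the content of the lower 2-Segal bijection $X_3 \cong X_2 \times_{X_1} X_2$. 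Pinning down the correct instance $(n,i)=(2,1)$ and confirming that its pullback cospan coincides with the fiber product $\iudecop(X)_1 \times_{\sd(X)_0} \ildec(X)_1$ is the crux of the argument.
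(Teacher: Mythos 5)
Your proposal is correct and follows essentially the same route as the paper's proof: the identical lower 2-Segal pullback instance giving the bijection $X_3 \cong X_2 \times_{X_1} X_2$ via $(d_1,d_3)$, the Segal condition on $\sd(X)$ from \cref{boors thm}, and the very same degenerate $5$-simplex supplying the factorization (your $s_3 s_0 \psi$ equals the paper's $s_0 s_2 \psi$ by the simplicial identity $s_0 s_2 = s_3 s_0$). The only differences are presentational — the paper packages the argument as a pullback square of bijections together with the observation that $s_0 s_2$ is a section of $d_1^{\sdrm}$, whereas you invert the composition map directly, and you additionally spell out the wide-subcategory verification that the paper leaves implicit.
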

\begin{proof} Let $Y= \sd(X)$, $L=\iudecop(X)$, and $R=\ildec(X)$.
Since $X$ is lower 2-Segal, the following square is a pullback.
\[ \begin{tikzcd}
X_3 \rar{d_3} \dar[swap]{d_1} \ar[dr, phantom, "\lrcorner" very near start] & X_2 \dar{d_1} \\
X_2 \rar[swap]{d_2} & X_1
\end{tikzcd} \]
By \cref{boors thm}, the square
\[ \begin{tikzcd}
X_5 \rar{d_0d_5} \dar[swap]{d_2d_3} \ar[dr, phantom, "\lrcorner" very near start] & X_3 \dar{d_1d_2}  \ar[drr, phantom, "="]
& &
Y_2 \rar{d_2^\sdrm} \dar[swap]{d_0^\sd} \ar[dr, phantom, "\lrcorner" very near start] & Y_1 \dar{d_0^\sdrm}
\\
X_3 \rar[swap]{d_0d_3} & X_1 & &
Y_1 \rar[swap]{d_1^\sdrm} & Y_0
\end{tikzcd} \]
is a pullback, which implies that
\[ \begin{tikzcd}
X_3 \rar{s_0s_2} \dar["{d_3, d_1}" swap, "\cong"] \ar[dr, phantom, "\lrcorner" very near start] & X_5 \dar["\cong"',"{d_0d_5,d_2d_3}"]  \\
X_2 \times_{X_1} X_2 \rar[swap]{s_\top,s_\bot} & X_3 \times_{X_1} X_3 
\end{tikzcd} \]
is a pullback since both vertical arrows are isomorphisms.
This square is isomorphic to 
\[ \begin{tikzcd}
Y_1 \rar{s_0s_2} \dar[swap]{s_2d_3, s_0d_1} \ar[dr, phantom, "\lrcorner" very near start] & Y_2 \dar["\cong"',"{d_2^\sdrm, d_0^\sdrm}"] \\
L_1 \times_{Y_0} R_1 \rar[hook] & Y_1 \times_{Y_0} Y_1.
\end{tikzcd} \]
Since $s_0s_2 \colon X_3 \to X_5$ is a section of $d_1d_4 = d_1^\sd \colon Y_2 \to Y_1$, every morphism in $Y$ factors uniquely as a morphism in $L$ followed by a morphism in $R$.
\end{proof}

\begin{corollary}\label{second sfs}
If $X \in \sset$ is 2-Segal, then $(\ildec(X), \iudecop(X))$ is a strict factorization system on $\sd (X)$.
\end{corollary}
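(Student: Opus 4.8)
The plan is to deduce this formally from \cref{prop canonical fact} applied to the opposite simplicial set $X^\op$, exploiting the self-duality of the edgewise subdivision. First I would observe that $X^\op$ is again 2-Segal: since $X$ is upper 2-Segal exactly when $X^\op$ is lower 2-Segal (and conversely), the 2-Segal condition is self-dual. Hence \cref{prop canonical fact} applies to $X^\op$ and produces a strict factorization system $(\iudecop(X^\op), \ildec(X^\op))$ on $\sd(X^\op)$.

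Next I would transport all three pieces of data back along the duality. Because $\dagger \circ Q = Q$, we have $\sd(X^\op) = \sd(X)$, so both factorization systems live on the same category. It then remains only to identify the two wide subcategories, namely to show $\iudecop(X^\op) = \ildec(X)$ and $\ildec(X^\op) = \iudecop(X)$ as subobjects of $\sd(X)$. Granting these equalities, the factorization system produced above is precisely $(\ildec(X), \iudecop(X))$, which is the claim; the left-then-right order also matches, since $\iudecop(X^\op)$ is the left class.

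The step requiring the most care — and the main obstacle — is establishing those identifications \emph{as subobjects of $\sd(X)$}, rather than merely as abstract simplicial sets. At the level of underlying simplicial sets everything follows from the relations recorded in \cref{sec dec esd}: from $A_\top = \dagger \circ A_\bot$ and $(X^\op)^\op = X$ one computes $A_\top^*(X^\op) = A_\bot^* X$ and $A_\bot^*(X^\op) = A_\top^* X$, whence $\iudecop(X^\op) \cong \ldec(X) \cong \ildec(X)$ and $\ildec(X^\op) \cong \udec(X)^\op \cong \iudecop(X)$. To upgrade these to equalities of subobjects I would work degreewise. In degree $n$, the subobject $\ildec(X)_n \subseteq \sd(X)_n = X_{2n+1}$ is the image of $s_\bot^n$, while $\iudecop(X^\op)_n$ is the image of $s_\top^n$ computed for $X^\op$; since passing to the opposite interchanges the top and bottom degeneracy operators, this latter map is literally $s_\bot^n$ for $X$, so the two images coincide, and the identification $\ildec(X^\op) = \iudecop(X)$ is symmetric. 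Conceptually, all of this is the single assertion that $\sigma_\top$ is obtained from $\sigma_\bot$ by whiskering on the left with $\dagger$, which in turn reflects that $\sigma_\top$ and $\sigma_\bot$ come from the dual maps $[n] \to [0]$ and $[0]^\op \leftarrow [n]^\op$.

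Alternatively, one could avoid the opposite altogether and rerun the argument of \cref{prop canonical fact} with the two classes interchanged, this time invoking the \emph{upper} 2-Segal condition to produce the relevant pullback square together with a section of the middle face map $d_1^\sd$ witnessing unique $\ildec$-then-$\iudecop$ factorizations. I would nonetheless prefer the duality argument, since it makes the statement a genuine corollary.
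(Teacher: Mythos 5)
Your proof is correct and is essentially the paper's own argument: the paper likewise deduces the corollary by applying \cref{prop canonical fact} to $X^\op$ and identifying $\ildec(X) = \iudecop(X^\op)$ and $\iudecop(X) = \ildec(X^\op)$ inside $\sd(X) = \sd(X^\op)$. Your degreewise check that passing to the opposite swaps the top and bottom degeneracies merely makes explicit the identification of subobjects that the paper leaves implicit.
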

\begin{proof}
The result is obtained by applying the preceding proposition to $X^\op$: inside $\sd(X) = \sd(X^\op)$ we have $\ildec(X) = \iudecop(X^\op)$ and $\iudecop(X) = \ildec(X^\op)$.
\end{proof}

The map $s_0s_2 \colon X_3 \to X_5$ supplies the factorization in \cref{prop canonical fact}, and the map $s_1s_3 \colon X_3 \to X_5$ supplies the factorization in \cref{second sfs}.

\begin{remark}
This discussion can also be applied to simplicial spaces: if $X$ is a Rezk complete decomposition space (see \cite[5.13]{GKT2}), then $(\iudecop(X), \ildec(X))$ is a factorization system on $\sd(X)$.
Let us explain this result a bit more.
First, under our hypothesis we have by \cite{BOORS:ESC,HK-untwist} that $\sd(X)$ is a Rezk complete Segal space; this type of simplicial space is an important model for $(\infty,1)$-categories \cite{Rezk:MHTHT}.
The maps $\sigma_\bot^* \colon \ldec (X) = A_\bot^* X \to \sd(X)$ and $\sigma_\top^* \colon \udec(X)^\oprm \simeq A_\top^* X \to \sd(X)$ from \cref{sec dec esd} are (levelwise) monomorphisms of simplicial $\infty$-groupoids \cite[Lemma 2.5]{GKT2}.
We will show in the next paragraph that $\ldec(X) \simeq \ildec(X)$ and $\udec(X) \simeq \iudecop(X)^\op$ are also Rezk complete Segal spaces, and the arguments above are readily adapted to establish that $(\iudecop(X), \ildec(X))$ forms an $\infty$-categorical factorization system on $\sd(X)$. 

Recall that an equivalence in a simplicial space $Y$ is an arrow $f\in Y_1$ so that there exist $\alpha, \beta \in Y_2$ with $d_2 \alpha = f =  d_0 \beta$, $d_1\alpha = s_0d_1f$, and $d_1\beta = s_0d_0f$.
We denote by $Y_1^\eqrm \subset Y_1$ the full sub $\infty$-groupoid spanned by the equivalences.
A simplicial space is Rezk complete just when $s_0 \colon Y_0 \to Y_1$ induces an equivalence $Y_0 \simeq Y_1^\eqrm$.
Notice that if $f$ is an equivalence in $\ildec(X)$, then it is also an equivalence in $\sd(X)$.
We thus have the dashed arrow in the following square, which must be a monomorphism.
\[ \begin{tikzcd}
\ildec(X)_0 \rar \dar{\simeq} & \ildec(X)_1^\eqrm \rar[hook] \dar[hook, dashed] & \ildec(X)_1 \dar[hook] \\
\sd(X)_0 \rar{\simeq} & \sd(X)_1^\eqrm \rar[hook] & \sd(X)_1
\end{tikzcd} \]
Since $\sd(X)_0 \simeq \sd(X)_1^\eqrm$ when $X$ is Rezk complete, it follows that $\ildec(X)_0 \to \ildec(X)_1^\eqrm$ is an equivalence.
Further, $\ldec(X)$ is a Segal space by the path space criterion \cite{DyckerhoffKapranov:HSS,GKT1}.
Hence $\ildec(X) \simeq \ldec(X)$ is a Rezk complete Segal space.
A similar argument establishes the result for $\udec(X)$.
\end{remark}

\section{Unary operadic categories}\label{sec unary opcat}
In this section, we discuss unary operadic categories.
The full definition of an operadic category may be found in \cite{BataninMarkl:OCDDC,GarnerKockWeber:OCD}.
Briefly, a unary operadic category is an operadic category where the cardinality functor is identically the one-point set.
We give a self-contained definition below.

A category $\oc$ is said to be \emph{endowed with chosen local terminal objects} if we have specified a terminal object in each connected component of $\oc$.
The letter $u$ will be reserved for these chosen local terminal objects.
If $x$ is some other object of $\oc$, then we write $\tau_x$ or $\tau \colon x\to u$ for the unique map from $x$ to the chosen local terminal object in its connected component.

\begin{definition}
A \emph{unary operadic category} consists of:
\begin{itemize}
\item a category $\oc$ endowed with chosen local terminal objects, along with 
\item a fiber functor $\varphi\colon \udec(\oc) \cong \sum \oc_{/x} \to \oc$.
\end{itemize}
This data is subject to the following three axioms.
\begin{enumerate}[label=(U\arabic*), ref=U\arabic*]
\item For each $x\in \oc$, the object $\varphi_0(\id_x \colon x\to x)$ is a chosen local terminal object.\label{OCA:idfiber}
\item $\varphi_0(\tau_x \colon x \to u) = x$ and $\varphi_1(f \colon \tau_x \to \tau_y) = (f \colon x \to y)$.
\label{OCA:uniquemap}
\item \label{OCA:fibercomp} Suppose we have composable morphisms $g,h$ in $\oc_{/x} \subset \udec(X)$:
\[ \begin{tikzcd}[sep=small]
a \rar["h"] \ar[dr] & b \rar["g"] \dar & c \ar[dl,"f"] \\
& x
\end{tikzcd} \]
Then
$\varphi_0(\varphi_1(fg \xrightarrow{g} f)) = \varphi_0(b \xrightarrow{g} c)$ 
and 
\[
    \varphi_1\left( 
\begin{tikzcd}[column sep=small]
a \ar[rr,"h"] \ar[dr,"gh"'] & & b \ar[dl,"g"] \\
& c
\end{tikzcd}
    \right) =
    \varphi_1\left( 
\begin{tikzcd}[column sep=-0.2cm, row sep=small]
\varphi_0(fgh) \ar[rr,"\varphi_1(h)"] \ar[dr] & & \varphi_0(fg) \ar[dl,"\varphi_1(g)"] \\
& \varphi_0(f)
\end{tikzcd}
    \right).
\]
\end{enumerate}
The category of unary operadic categories consists of functors which both preserve chosen local terminal objects and commute with fiber functors.
\end{definition}

This amounts to an operadic category whose cardinality functor is identically the one-point set.
Comparing to \cite[Definition 1]{GarnerKockWeber:OCD}, the three axioms \eqref{OCA:idfiber}, \eqref{OCA:uniquemap}, and \eqref{OCA:fibercomp} correspond to (A2), (A4), and (A5), while (A1) and (A3) are automatic in the unary case.

We will use tightly dotted lines to diagrammatically express the fiber relationship.
For example, the diagrams below both reflect the situation where $a = \varphi_0(h) = \varphi_0(fg)$, $b=\varphi_0(f)$, and $k= \varphi_1(g\colon h \to f)$.
\[\begin{tikzcd}[row sep=small, column sep=tiny]
a \ar[rr,"k"] \dar[no head, densely dotted] & & b \dar[no head, densely dotted]\\
x \ar[rr,"g"] \ar[dr,"h"'] & & y \ar[dl,"f"]  \\
& z
\end{tikzcd} 
\qquad
\begin{tikzcd}[sep=small]
a \rar{k} \dar[no head, densely dotted] & b \dar[no head, densely dotted]\\
 x \rar["g"] & y \rar["f"] & z
\end{tikzcd}
\]
Likewise, the picture below depicts the situation of \eqref{OCA:fibercomp}.
\[
\begin{tikzcd}[sep=small]
m \dar[no head, densely dotted] \rar{k} & n \dar[no head, densely dotted]
\\
w \dar[no head, densely dotted] \rar{q} & y \rar{p} \dar[no head, densely dotted] & z \dar[no head, densely dotted]\\
a \rar["h"] & b \rar["g"] & c \rar["f"] & x
\end{tikzcd}
\]
Here, $p=\varphi_1(g\colon fg\to f)$, $q=\varphi_1(h\colon fgh \to fg)$, and $k =\varphi_1(q \colon pq \to p) = \varphi_1(h\colon gh \to g)$.
We have the following lemma, whose $n=2,3$ cases are precisely what is encoded in \eqref{OCA:fibercomp}.

\begin{lemma}\label{lem a5 general}
If $\oc$ is a unary operadic category and $\varphi \colon \udec(\oc)\to \oc$ is the fiber functor, then the following diagram commutes for all $n\geq 2$.
\[ \begin{tikzcd}[row sep=tiny]
\oc_n \rar{d_n} \ar[dd,"="]& \oc_{n-1} = \udec(\oc)_{n-2} \ar[dr, start anchor=south east, "\varphi_{n-2}"] \\
& & \oc_{n-2} \\
\udec(\oc)_{n-1} \rar{\varphi_{n-1}} & \oc_{n-1} = \udec(\oc)_{n-2} \ar[ur, start anchor=north east, "\varphi_{n-2}"']
\end{tikzcd} \]
\end{lemma}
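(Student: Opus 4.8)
The plan is to argue by induction on $n$, with the $n=2$ and $n=3$ cases serving as the base: these are exactly the two equations recorded in axiom \eqref{OCA:fibercomp}, as the surrounding discussion already notes. For the inductive step, fix $\xi \in \oc_n$ and set
\[ \alpha \coloneqq \varphi_{n-2}(d_n \xi), \qquad \beta \coloneqq \varphi_{n-2}(\varphi_{n-1}\xi), \]
two elements of $\oc_{n-2}$; the goal is $\alpha = \beta$. For $n \geq 4$ we have $n-2 \geq 2$, so I would use that $\oc$ is a category: by the pullback squares of the Grothendieck--Segal condition, the pair $(d_\top, d_\bot) \colon \oc_{n-2} \to \oc_{n-3} \times \oc_{n-3}$ is jointly monomorphic. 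It therefore suffices to establish $d_\top \alpha = d_\top \beta$ and $d_\bot \alpha = d_\bot \beta$, each of which I will reduce to an instance of the statement in degree $n-1$, i.e.\ the inductive hypothesis.

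The only inputs for these reductions are the simplicial identities in $\oc$ and the fact that $\varphi \colon \udec(\oc) \to \oc$ is a map of simplicial sets, so that $\varphi$ commutes with every face $d_0, \dots, d_k$ of $\udec(\oc)_k$ — but not with the discarded top face, whose non-commutativity is precisely what the lemma measures. On the $\beta$-side both faces pass through: since $n-2 \leq n-1$, naturality gives $d_{n-2}\varphi_{n-1}\xi = \varphi_{n-2}(d_{n-2}\xi)$ and $d_0 \varphi_{n-1}\xi = \varphi_{n-2}(d_0 \xi)$, whence
\[ d_\top \beta = \varphi_{n-3}\bigl(\varphi_{n-2}(d_{n-2}\xi)\bigr), \qquad d_\bot \beta = \varphi_{n-3}\bigl(\varphi_{n-2}(d_0 \xi)\bigr). \]
On the $\alpha$-side I would instead use the identities $d_{n-2}d_n = d_{n-1}d_{n-2}$ and $d_0 d_n = d_{n-1}d_0$ together with naturality of $\varphi$ to obtain $d_\top \alpha = \varphi_{n-3}\bigl(d_{n-1}(d_{n-2}\xi)\bigr)$ and $d_\bot \alpha = \varphi_{n-3}\bigl(d_{n-1}(d_0 \xi)\bigr)$. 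Recognizing $d_{n-1}$ as the top face of $\oc_{n-1}$, each inner expression is the left-hand side of the lemma applied to $d_{n-2}\xi$ and $d_0 \xi$ in $\oc_{n-1}$; the inductive hypothesis converts them into $\varphi_{n-3}\bigl(\varphi_{n-2}(d_{n-2}\xi)\bigr)$ and $\varphi_{n-3}\bigl(\varphi_{n-2}(d_0 \xi)\bigr)$, matching the $\beta$-side, and joint monicity then yields $\alpha = \beta$.

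The step requiring the most care is the bookkeeping of the three identifications $\oc_n = \udec(\oc)_{n-1}$, $\oc_{n-1} = \udec(\oc)_{n-2}$, $\oc_{n-2} = \udec(\oc)_{n-3}$, and of which face index is a legitimate face of $\udec(\oc)$ versus the forbidden top face. Every invocation of naturality of $\varphi$ above is made with a face index no larger than the ambient dimension, so it is valid; the sole place where the top face genuinely appears is inside $d_{n-1}(d_{n-2}\xi)$ and $d_{n-1}(d_0\xi)$, which is exactly where the inductive hypothesis is applied. I do not anticipate a deeper obstacle: once the indices are pinned down the argument is a short manipulation of simplicial identities, and the base cases $n = 2, 3$ are supplied directly by \eqref{OCA:fibercomp}.
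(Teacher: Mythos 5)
Your proof is correct, but it takes a genuinely different route from the paper's. The paper argues by direct computation: it writes an arbitrary $n$-simplex as an explicit chain $x_0 \to \cdots \to x_n$, computes the fibers $y_i$ and the fibers-of-fibers $z_j$, and then applies \eqref{OCA:fibercomp} once for each index $i$ (together with functoriality of $\varphi$, to identify $p_{i,n-1}$ with $\varphi_1(f_{i,n-1}\colon f_{i,n} \to f_n)$) to show that the double-fiber chain coincides with the fiber chain of the truncated simplex $d_n\sigma$. You instead induct on $n$, invoking \eqref{OCA:fibercomp} only in the base cases $n=2,3$, and your inductive step is purely formal: simpliciality of $\varphi$ (which commutes with all faces except the discarded top one), the simplicial identities $d_{n-2}d_n = d_{n-1}d_{n-2}$ and $d_0 d_n = d_{n-1}d_0$, and the joint monicity of $(d_\top, d_\bot)$ on simplices of dimension at least $2$, which is exactly the pullback form of the Grothendieck--Segal condition recalled in the paper's notation section (and is available since $n-2\geq 2$ in the inductive step). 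I checked the index bookkeeping and it is sound; in particular, every face you pass through $\varphi$ has index at most the ambient décalage dimension, and the forbidden top face appears only where the inductive hypothesis is applied. The trade-off: the paper's computation exhibits the iterated-fiber grid concretely, which is reused in spirit when describing $U\oc$ and its top face maps later, whereas your argument is lighter on notation and isolates a nice structural fact -- that the commutativity in all degrees is a formal consequence of the degree $2$ and $3$ cases encoded in the axiom, given only that $\oc$ is a category and $\varphi$ is simplicial.
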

\begin{proof}
Let $\sigma$ be an $n$-simplex of $\oc$ as follows
\[
\begin{tikzcd}[sep=small]
x_0 \arrow[r, "f_1"] & x_1 \rar{f_2} & \cdots \rar{f_{n-1}} & x_{n-1} \rar{f_n} & x_n.
\end{tikzcd}
\]
For $i\leq j$ we will use the following shorthand for composites:
\[ f_{i,j} = f_j \circ \cdots \circ f_{i+1} \colon x_i \to x_j. \]
(and likewise for $p_{i,j}$).
Taking fibers twice, we obtain
\[
\begin{tikzcd}[sep=small] 
z_0 \arrow[r, "q_1"]  \dar[no head, densely dotted]& z_1 \rar{q_2}  \dar[no head, densely dotted] & \cdots \rar{q_{n-3}} & z_{n-3} \dar[no head, densely dotted] \rar{q_{n-2}} & z_{n-2} \dar[no head, densely dotted]
\\
y_0 \arrow[r, "p_1"]  \dar[no head, densely dotted]& y_1 \rar{p_2}  \dar[no head, densely dotted] & \cdots \rar{p_{n-3}} & y_{n-3} \dar[no head, densely dotted] \rar{p_{n-2}} & y_{n-2} \dar[no head, densely dotted] \rar{p_{n-1}} & y_{n-1} \dar[no head, densely dotted]
\\
x_0 \arrow[r, "f_1"] & x_1 \rar{f_2} & \cdots \rar{f_{n-3}} & x_{n-3} \rar{f_{n-2}} & x_{n-2} \rar["f_{n-1}"] & x_{n-1} \rar["f_n"] & x_n 
\end{tikzcd}
\]
where 
\[ p_i = \varphi_1 \left( \begin{tikzcd}[column sep=0, row sep=small]
x_{i-1} \ar[rr,"f_i"] \ar[dr,"f_{i-1,n}"'] &  & x_i  \ar[dl,"f_{i,n}"] \\
& x_n
\end{tikzcd} \right)
\qquad
q_j = 
\varphi_1 \left( 
\begin{tikzcd}[column sep=0, row sep=small]
y_{j-1} \ar[rr,"p_j"] \ar[dr,"p_{j-1,n-1}"'] &  & y_j  \ar[dl,"p_{j,n-1}"] \\
& y_{n-1}
\end{tikzcd}
\right)
\]
for $i=1,\dots, n-1$  and $j=1,\dots, n-2$,  

By functoriality of $\varphi$, we have $p_{i,n-1} = p_{n-1} \cdots p_{i+1} = \varphi_1(f_{i,n-1} \colon f_{i,n} \to f_n)$.
Using this, we apply \eqref{OCA:fibercomp} to the following composable pair of morphisms of $\udec(\oc)$
\[ \begin{tikzcd}
x_{i-1} \rar["f_i"] \ar[dr,"f_{i-1,n}"'] & x_i \rar["f_{i,n-1}"] \dar & x_{n-1} \ar[dl,"f_n"] \\
& x_n
\end{tikzcd} \]
to see that 
\[
\varphi_1 \left( \begin{tikzcd}[column sep=0, row sep=small]
x_{i-1} \ar[rr,"f_i"] \ar[dr,"f_{i-1,n-1}"'] &  & x_i  \ar[dl,"f_{i,n-1}"] \\
& x_n
\end{tikzcd} \right) = \varphi_1 \left( 
\begin{tikzcd}[column sep=0, row sep=small]
y_{i-1} \ar[rr,"p_i"] \ar[dr,"p_{i-1,n-1}"'] &  & y_i  \ar[dl,"p_{i,n-1}"] \\
& y_{n-1}
\end{tikzcd}
\right) = q_i.
\]
Thus the result of applying $\varphi_{n-2}$ to the bottom row of the following diagram yields the top row.
\[
\begin{tikzcd}[sep=small] 
z_0 \arrow[r, "q_1"]  \dar[no head, densely dotted]& z_1 \rar{q_2}  \dar[no head, densely dotted] & \cdots \rar{q_{n-3}} & z_{n-3} \dar[no head, densely dotted] \rar{q_{n-2}} & z_{n-2} \dar[no head, densely dotted]
\\
x_0 \arrow[r, "f_1"] & x_1 \rar{f_2} & \cdots \rar{f_{n-3}} & x_{n-3} \rar{f_{n-2}} & x_{n-2} \rar["f_{n-1}"] & x_{n-1}
\end{tikzcd}
\]
This bottom row is $d_n \sigma$, and we conclude that $\varphi_{n-2} \varphi_{n-1} \sigma = \varphi_{n-2} d_n \sigma$.
\end{proof}

\begin{definition}\label{def wbu}
A unary operadic category $\oc$ satisfies the \emph{(weak) blow-up axiom} if the fiber functor
  $\varphi \colon \udec(\oc) \to \oc$ is a discrete opfibration.
\end{definition}

\begin{remark}\label{remark weak strong blow-up}
The weak blow-up axiom was originally given in a different form for general operadic categories in \cite{BataninMarkl:OCNEKD}.
That our definition agrees with that of Batanin--Markl for unary operadic categories follows from \cite[Remark 2.5]{BataninMarkl:OCNEKD}, since in a unary operadic category every morphism is automatically `order-preserving' in their sense.
They also give a stronger blow-up axiom, but \cite[Lemma 2.11]{BataninMarkl:OCNEKD} (combined with \cite[Corollary 2.6]{BataninMarkl:OCNEKD}) imply that for unary operadic categories, the two notions coincide.
Henceforth we omit the adjective `weak.'
\end{remark}

Additionally, we have the following proposition, which follows from Corollary 2.6 and Lemma 2.12 of \cite{BataninMarkl:OCNEKD}.

\begin{proposition}\label{isos and loc term fibers}
Suppose $\oc$ is a unary operadic category satisfying the blow-up axiom.
Let $f \colon x \to z$ be a morphism of $\oc$ having fiber $y$.
\begin{itemize}
\item $y$ is a local terminal object if and only if $f$ is an isomorphism.
\item $y$ is a chosen local terminal object if and only if $f$ is an identity. \qed
\end{itemize}
\end{proposition}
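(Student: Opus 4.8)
The plan is to analyze the morphism $f \colon x \to z$ through its incarnation as the canonical map to the terminal object of the slice $\oc_{/z} \subset \udec(\oc)$. Writing $\hat f \colon (f \colon x \to z) \to (\id_z \colon z \to z)$ for this slice morphism (which is realized by $f$ itself, since $(\id_z)$ is terminal in $\oc_{/z}$), applying the fiber functor produces a map $\varphi_1(\hat f) \colon y \to u$, where $y = \varphi_0(f)$ is the fiber and $u \coloneqq \varphi_0(\id_z)$ is a chosen local terminal object by \eqref{OCA:idfiber}. The whole statement then reduces to transporting properties back and forth across $\varphi$ along this single morphism.

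For the first item, the key observation is that a morphism of a slice is invertible if and only if its underlying morphism of $\oc$ is; thus $\hat f$ is an isomorphism exactly when $f$ is. Since $\varphi$ is a functor it preserves isomorphisms, so if $f$ is an isomorphism then $\varphi_1(\hat f)\colon y \to u$ is invertible, and an object isomorphic to the local terminal object $u$ is itself local terminal. Conversely, if $y$ is local terminal then the unique arrow $y \to u$ between two terminal objects of the same component is invertible; but this unique arrow is precisely $\varphi_1(\hat f)$, so it remains to deduce that $\hat f$ (equivalently $f$) is invertible. Here is where the blow-up axiom enters: I would first record the standard fact that a discrete opfibration reflects isomorphisms, by lifting the inverse of $\varphi_1(\hat f)$ out of the target of $\hat f$ and invoking uniqueness of identity lifts to see that the two composites are identities. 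Applying this to $\varphi$ gives that $\hat f$ is invertible, hence so is $f$.

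For the second item, the reverse implication is immediate from \eqref{OCA:idfiber}: if $f = \id_x$ then $y = \varphi_0(\id_x)$ is a chosen local terminal object. For the forward implication, suppose $y$ is a chosen local terminal object. Since $\varphi_1(\hat f)$ places $y$ in the same connected component as $u = \varphi_0(\id_z)$, and each component contains a unique chosen local terminal object, we get $y = u$; the first item then shows $f$ is an isomorphism, so $\varphi_1(\hat f)$ is an endomorphism of the terminal object $u$ and must equal $\id_u$. Now both $\hat f$ and $\id_{(f)}$ are morphisms of $\udec(\oc)$ issuing from $(f)$ and lying over $\id_u$; by uniqueness of lifts for the discrete opfibration $\varphi$ they coincide, forcing $(\id_z) = (f)$ and hence $f = \id_z$.

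The main obstacle — and the only place the blow-up axiom is genuinely needed — is the passage from information about $\varphi_1(\hat f)$ back to information about $\hat f$ itself, namely reflecting isomorphisms and detecting when a lift is an identity. Both follow from the uniqueness of lifts in a discrete opfibration, so the crux is to isolate and cleanly state that uniqueness property; the remaining steps are formal manipulations of slice categories and terminal objects.
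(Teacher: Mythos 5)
Your proof is correct, but it takes a genuinely different route from the paper, which in fact gives no argument at all: the proposition there is stated with its proof deferred entirely to a citation of Corollary 2.6 and Lemma 2.12 of \cite{BataninMarkl:OCNEKD}, results proved in the general (non-unary) framework using the Batanin--Markl formulation of the blow-up axioms. Your argument is instead self-contained and works entirely from \cref{def wbu}, the paper's reformulation of blow-up as the statement that $\varphi \colon \udec(\oc) \to \oc$ is a discrete opfibration. The inputs are exactly: axiom \eqref{OCA:idfiber}; the observation that the unique slice map $\hat f \colon (f) \to (\id_z)$ in $\oc_{/z}$ has underlying morphism $f$ and is invertible if and only if $f$ is; preservation of isomorphisms by the functor $\varphi$ together with invariance of local terminality under isomorphism (forward direction of the first item, where no blow-up is needed); reflection of isomorphisms by discrete opfibrations (converse direction); and, for the second item, uniqueness of the chosen terminal object in each component plus uniqueness of lifts with prescribed source, which forces $\hat f = \id_{(f)}$ and hence $f = \id_z$. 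All of these steps are sound. (One small trim: in the second item you do not actually need to invoke the first item --- once $y = u$, the map $\varphi_1(\hat f)$ is an endomorphism of a terminal object and is automatically $\id_u$.) What the paper's citation buys is brevity and compatibility with the general operadic-category theory; what your proof buys is the insight that, in the unary case, the proposition is a purely formal consequence of the discrete-opfibration definition --- it uses neither \eqref{OCA:uniquemap} nor \eqref{OCA:fibercomp} --- which is arguably the natural argument given the paper's setup.
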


\section{Unary operadic categories as upper d\'ecalage}\label{sec unary as UD}
In this section we expand on part of the proof of the following theorem, which originally appears as \cite[Corollary 12]{GarnerKockWeber:OCD}.
This formulation relies on \cref{lem upper 2-segal char}.
\begin{theorem}[Garner--Kock--Weber]\label{gkw theorem}
The upper d\'ecalage functor induces an equivalence between the category of upper 2-Segal sets and that of unary operadic categories.
\end{theorem}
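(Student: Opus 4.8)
The plan is to exhibit the equivalence by describing explicit mutually inverse functors and verifying that they respect all of the structure involved. In one direction, given an upper 2-Segal set $X$, \cref{lem upper 2-segal char} already tells us that $\oc \coloneqq \udec(X)$ is a category. I would upgrade this to a unary operadic category by (i) taking the chosen local terminal objects to be the degenerate edges $s_\top(v)$ for $v \in X_0$, and (ii) taking the fiber functor to be the canonical augmentation $\varphi = d_\top \colon \udec(\oc) \to \oc$. The second point makes sense because $\oc$, being a category, is in particular upper 2-Segal, so \cref{lem upper 2-segal char} (applied to $\oc$) shows that $\udec(\oc)$ is again a category; thus $d_\top$ is a genuine functor and not merely a simplicial map. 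In the other direction, given a unary operadic category $(\oc,\varphi)$, I would reconstruct a simplicial set $X$ by reversing d\'ecalage: set $X_0 \coloneqq \pi_0(\oc)$ (equivalently, the set of chosen local terminal objects), set $X_n \coloneqq \oc_{n-1}$ for $n\geq 1$, let the non-top faces and degeneracies of $X$ be those of $\oc$, and let the forgotten top operators be supplied by the operadic data: the top face maps by the components $\varphi_n$ of the fiber functor, and the top degeneracies by insertion of chosen local terminal objects.

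For the forward functor the three operadic axioms should reduce to simplicial identities holding in the genuine simplicial set $X$. Axiom \eqref{OCA:idfiber} is the computation that $\varphi_0(\id_x) = d_\top(s_\bot x)$, which a simplicial identity rewrites as $s_\bot$ applied to the component vertex of $x$, i.e. a chosen local terminal object. Axiom \eqref{OCA:uniquemap} records that the augmentation returns $x$ from the unique map $\tau_x$ out of $x$, together with its naturality; here I first have to know that $s_\top(v)$ really is terminal in its connected component and that these components biject with $X_0$, which is the one genuinely 2-Segal input on this side, and which follows from the pullback conditions forcing the connecting $2$-simplices to exist and be unique. Finally, axiom \eqref{OCA:fibercomp} is, via $\varphi = d_\top$, a direct consequence of the simplicial identity $d_n d_{n+1} = d_n d_n$ relating the two topmost faces; this is exactly the content of \cref{lem a5 general} (in its $n=2,3$ cases) read in reverse.

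For the inverse functor the content is to verify that the reconstructed $X$ is a well-defined simplicial set and is upper 2-Segal, and that $\udec$ returns $(\oc,\varphi)$. That $\udec(X) = \oc$ with augmentation equal to $\varphi$ holds by construction, and upper 2-Segality is then immediate from \cref{lem upper 2-segal char}, since $\udec(X)=\oc$ is a category. Checking the simplicial identities for $X$ splits into two families: those not involving a forgotten top operator, which are inherited verbatim from $\oc$; and those that do, which must be produced from the operadic axioms. Functoriality of $\varphi$ supplies the identities asserting that the forgotten top face commutes with the retained faces and degeneracies, \cref{lem a5 general} supplies the top-face/top-face identity $d_n d_{n+1} = d_n d_n$, and axioms \eqref{OCA:idfiber} and \eqref{OCA:uniquemap} (governing the interaction of $\varphi$ with identities and with chosen local terminal objects) supply the identities involving the forgotten top degeneracy. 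Naturality of all the constructions gives functoriality of both assignments, and the two round trips are seen to be naturally isomorphic to the identity, the only subtle point being the bijection $\pi_0(\udec(X)) \cong X_0$ discussed above.

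I expect the main obstacle to be the second family of simplicial identities in the inverse construction, in particular those mixing the forgotten top degeneracy with the forgotten top face and with itself, since these are precisely the relations that are \emph{not} literally among the stated operadic axioms and must instead be assembled from \eqref{OCA:idfiber}, \eqref{OCA:uniquemap}, functoriality of $\varphi$, and repeated application of \cref{lem a5 general}. Establishing that degenerate edges are terminal in their components, which is needed both for \eqref{OCA:uniquemap} and for the $\pi_0$ identification, is the other place where the upper 2-Segal hypothesis must be used essentially rather than merely formally.
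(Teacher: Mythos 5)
Your construction of the fiber functor in the forward direction is wrong, and the error sits exactly where the theorem has its content. You take $\varphi$ to be the canonical augmentation $d_\top \colon \udec(\oc) \to \oc$ of $\oc = \udec(X)$, i.e.\ the map assembled from the unused top face operators \emph{of $\oc$}. In terms of $X$ these are the operators $d_{n+1} \colon X_{n+2} \to X_{n+1}$, and categorically this augmentation is just the domain projection $\sum_x \oc_{/x} \to \oc$, $(a \to x) \mapsto a$. With that choice, axiom \eqref{OCA:idfiber} fails at once: $\varphi_0(\id_x) = d_1(s_0 x) = x$ would have to be a chosen local terminal object for \emph{every} object $x$. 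The correct fiber functor, used in \cref{app upper dec}, is not the augmentation of $\oc$ but $\udec$ applied to the augmentation of $X$: in degree $n$ it is $d_{n+2} \colon X_{n+2} \to X_{n+1}$, the top face operators of $X$ itself --- precisely the data discarded in forming $\udec(X)$, and not recoverable from the category $\oc$ alone. This distinction is what makes the theorem true: your $\varphi$ is canonically determined by $\oc$ as a category, so it could never encode the forgotten top faces, and the backward ``undecking'' construction would have nothing from which to rebuild them. Tellingly, your own computations use the correct map rather than your stated one: the rewriting $d_\top(s_\bot x) = s_\bot(\text{component vertex})$ is the simplicial identity $d_2 s_0 = s_0 d_1$ in $X$, valid for the top face $d_2$ of $X$ but not for the top face $d_1$ of $\oc$ (where $d_1 s_0 = \id$); likewise the identity $d_n d_{n+1} = d_n d_n$ you invoke for \eqref{OCA:fibercomp} relates the two topmost faces of $X$, only the lower of which survives into $\oc$.

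Once the fiber functor is corrected, your outline is essentially the paper's own treatment. \Cref{app upper dec} equips $\udec(X)$ with chosen local terminals $s_0(X_0)$ and fiber functor $\varphi_n = d_{n+2}$, verifying \eqref{OCA:idfiber}, \eqref{OCA:uniquemap}, \eqref{OCA:fibercomp} by exactly the simplicial identities you indicate, and upper 2-Segality enters where you predict, in showing degenerate edges are terminal in their components. \Cref{sec unary as UD} performs the inverse construction $U\oc$, presented not as $X_n = \oc_{n-1}$ with grafted-on top operators but, isomorphically, as the subset of $\oc_n$ of chains ending in a chosen local terminal object; with that presentation most simplicial identities hold for free, and only those involving $d_\top^X$ require the axioms together with \cref{lem a5 general} and \cref{lem top face degen} --- the same division of labor you correctly anticipate as the main obstacle.
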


We recall details about the passage from upper 2-Segal sets to unary operadic categories in Appendix~\ref{app upper dec}.
Given a unary operadic category $\oc$, there is an associated `undecking' $U\oc \in \sset$ which is an upper 2-Segal set such that $\udec (U\oc) \cong \oc$.
In this section, we spell out the details of the simplicial set $X=U\oc$;  
see also the related presentation in \cite[\S1--2]{BataninMarkl:OOCBC}.
Our presentation below has the advantage that equality of simplices is easy to test and most simplicial structure maps are evident, but has the disadvantage that top face maps are obscured.

The set $U\oc_n = X_n \subset \oc_n$ is given by chains $\sigma$ of composable morphisms of length $n$ ending in a chosen local terminal object $u$:
\begin{equation}\label{eq_C_n_elt}
\begin{tikzcd}[sep=small]
x_0 \arrow[r, "f_1"] & x_1 \rar{f_2} & \cdots \rar{f_{n-1}} & x_{n-1} \rar{\tau} & u.
\end{tikzcd}
\end{equation}
In particular, $X_0$ is the set of chosen local terminal objects, $X_1$ is in bijection with the objects of $\oc$, the set $X_2$ is in bijection with the morphisms of $\oc$, and so on.

All of the basic simplicial operators of $X$ agree with those of $\oc$, with the exception of the top face maps.
This should be expected, as there is no reason that the restriction to $X_n$ of $d_n \colon \oc_n \to \oc_{n-1}$ should take values in $X_{n-1}$.
The operator $d_\top^X = d_n^X \colon X_n \to X_{n-1}$ is instead given by the following composite, which lands in $X_{n-1}$ by \eqref{OCA:idfiber}.
\[ \begin{tikzcd}
X_n \rar[hook] \ar[rr, bend right, "\cong"] & \oc_n \rar{d_n} & \oc_{n-1} \rar{s_{n-1}} & \oc_n = \udec(\oc)_{n-1} \rar{\varphi_{n-1}} & \oc_{n-1}
\end{tikzcd} \]
More explicitly, this composite takes \eqref{eq_C_n_elt} to the top row of the following diagram, where $y_i$ is the fiber of the composite map $x_i \to x_{n-1}$. 
\begin{equation*}\label{eq_NDC_n-2_elt}
\begin{tikzcd}[column sep=small] 
y_0 \arrow[r, "p_1"]  \dar[no head, densely dotted]& y_1 \rar{p_2}  \dar[no head, densely dotted] & \cdots \rar{p_{n-2}} & y_{n-2} \dar[no head, densely dotted] \rar & u' \dar[no head, densely dotted]
\\
x_0 \arrow[r, "f_1"] \ar[drrr, bend right=15] & x_1 \rar{f_2}  \ar[drr, bend right=15] & \cdots \rar{f_{n-2}} & x_{n-2} \dar \rar["f_{n-1}"] & x_{n-1} \ar[dl,"\id_{x_{n-1}}"] \\
&  &  &  x_{n-1}
\end{tikzcd}
\end{equation*}
The object $u'$ is a chosen local terminal object by \eqref{OCA:idfiber}, so the top row lies in $X_{n-1}$.
Notice this assignment also makes sense for $n=1$; in this case, $d_1^X( x \xrightarrow{\tau} u)$ will be the chosen local terminal object $\varphi_0(\id_x)$.

\begin{proposition}
The object $X$ is a simplicial set.
\end{proposition}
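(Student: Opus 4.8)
The plan is to verify the simplicial identities by hand, exploiting the fact that only the top face operator $d_\top^X$ differs from the corresponding operator of $\oc$. First I would record that the remaining candidate operators are well defined: deleting or repeating any object other than the terminal one leaves a chain that still ends in a chosen local terminal object, so the non-top faces $d_i$ (for $i<n$) and every degeneracy $s_i$ restrict from $\oc$ to the subsets $X_\bullet \subseteq \oc_\bullet$, while $d_\top^X$ lands in $X$ by \eqref{OCA:idfiber}. Since $\oc$ is the nerve of a category, hence already a simplicial set, any simplicial identity all of whose operators coincide with those of $\oc$ holds automatically on $X$. Thus the only genuine work is with the identities in which a modified top face appears.

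Keeping track of which operator is ``top'' in each dimension, these outstanding identities organize into three families: (i) commutation of $d_\top^X$ with a strictly lower operator, namely $d_i^\oc d_\top^X = d_\top^X d_i^\oc$ for $i \le n-2$ and $d_\top^X s_j^\oc = s_j^\oc d_\top^X$ for $j<n$; (ii) the unit identity $d_\top^X s_\top^X = \id_{X_n}$; and (iii) the composite of two consecutive top faces, $d_\top^X d_\top^X = d_\top^X d_{n-1}^\oc$ (an instance of the simplicial relation $d_{m-1}d_m = d_{m-1}d_{m-1}$). I would dispatch (i) and (ii) first, as they are comparatively routine. For (i), the explicit description of $d_\top^X$ as ``discard the terminal object and pass to fibers over the new final object'' shows that a strictly lower face or degeneracy acts only on the part of the chain untouched by fiber formation, so these identities reduce to functoriality of $\varphi$ on each slice together with \eqref{OCA:fibercomp}. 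For (ii), computing $d_\top^X(s_\top^X \sigma)$ amounts to forming the fibers of the maps $\tau_{x_i}\colon x_i \to u$ over the terminal object, which \eqref{OCA:uniquemap} identifies with the $x_i$ and the original morphisms, so that $\sigma$ is recovered exactly.

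The main obstacle is family (iii). Here $d_\top^X d_\top^X$ computes the iterated fibers $\varphi_0(y_j \to y_{n-2})$ of the fibers $y_i = \varphi_0(x_i \to x_{n-1})$, and one must show this agrees with the result of first taking an ordinary lower face of $\sigma$ and then a single round of fibers. This equality of ``fibers of fibers'' with ``fibers after one face'' is exactly the statement of \cref{lem a5 general}, which was proved for all $n$ for precisely this purpose; the only residual bookkeeping is to absorb the auxiliary top degeneracy in $d_\top^X = \varphi_{n-1}\, s_{n-1}\, d_n$ using the simplicial identities internal to $\oc$ and \eqref{OCA:idfiber}. Once (i)--(iii) are in hand, every simplicial identity holds and $X$ is a simplicial set.
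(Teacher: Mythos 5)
Your proposal is correct and takes essentially the same approach as the paper's proof: you isolate exactly the same three families of non-automatic identities, handling the commutations of $d_\top^X$ with lower operators via simpliciality/functoriality of $\varphi$ and the simplicial identities of $\oc$, the unit identity $d_{n+1}^X s_n = \id$ via \eqref{OCA:uniquemap} (the paper's \cref{lem top face degen}), and the double-top-face identity via \cref{lem a5 general}. The only quibble is that your family (i) needs nothing beyond functoriality of $\varphi$ and the simplicial identities of $\oc$, so the appeal to \eqref{OCA:fibercomp} there is superfluous (that axiom enters only through \cref{lem a5 general} in family (iii)).
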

\begin{proof}
The only simplicial identities that are not automatic involve top face maps.
For $0\leq i \leq n-2$ we have the following string of equalities of maps $X_n \to X_{n-2}$:
\[
d_i d_n^X = d_i \varphi_{n-1} s_{n-1} d_n = \varphi_{n-2} d_i s_{n-1} d_n = \varphi_{n-2} s_{n-2} d_{n-1} d_i = d_{n-1}^X d_i.
\]
For $0\leq i \leq n-1$ we have the following string of equalities of maps $X_n \to X_n$:
\[
  d_{n+1}^X s_i = \varphi_n s_n d_{n+1} s_i = \varphi_n s_n s_i d_n = \varphi_n s_i s_{n-1} d_n = s_i \varphi_{n-1} s_{n-1} d_n = s_i d_n^X.
\]
It remains to check $d_{n-1}^X d_n^X = d_{n-1}^X d_{n-1}$ and $d_{n+1}^X s_n = \id_{X^n}$; the second of these appears as \cref{lem top face degen} below.
For the first equality, we can compute:
\begin{align*}
  d_{n-1}^X d_n^X = \varphi_{n-2} s_{n-2} d_{n-1} \varphi_{n-1} s_{n-1} d_n &= \varphi_{n-2} \varphi_{n-1} s_{n-2} d_{n-1} s_{n-1} d_n \\ &= \varphi_{n-2} \varphi_{n-1} s_{n-2} d_n.
\end{align*}
By \cref{lem a5 general}, we know $\varphi_{n-2} \varphi_{n-1} = \varphi_{n-2} d_n$, so the above expression becomes
\[
  \varphi_{n-2} d_n s_{n-2} d_n = \varphi_{n-2} s_{n-2} d_{n-1} d_n = \varphi_{n-2} s_{n-2} d_{n-1} d_{n-1}.
\]
The right-hand side is equal to $d_{n-1}^X d_{n-1}$.
\end{proof}

\begin{lemma}\label{lem top face degen}
The endomorphism $d_{n+1}^X s_n$ of $X_n$ is the identity.
\end{lemma}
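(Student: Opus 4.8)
The plan is to unwind the definition of the top degeneracy map $s_n^X$ on $X_n$, apply $d_{n+1}^X$ as defined by the composite $\varphi_n \circ s_n \circ d_{n+1}$, and show the result simplifies to the identity. The key observation I want to exploit is that the top face and degeneracy maps of $X$ are \emph{not} the restrictions of those on $\oc$, but rather the twisted versions defined via the fiber functor. So I must be careful: the degeneracy $s_n^X$ could be the ordinary $s_n$ restricted to $X_n$ (which is plausible, since by the simplicial-identity computations just completed one checks $s_n$ takes $X_n$ into $X_{n+1}$), whereas $d_{n+1}^X = \varphi_n s_n d_{n+1}$ genuinely twists.

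\emph{First I would} pin down what $s_n^X$ does to a simplex $\sigma$ of the form \eqref{eq_C_n_elt}, namely the chain $x_0 \xrightarrow{f_1} \cdots \to x_{n-1} \xrightarrow{\tau} u$. If $s_n^X = s_n$ is the ordinary top degeneracy, it simply inserts an identity at the terminal end, producing $x_0 \to \cdots \to x_{n-1} \xrightarrow{\tau} u \xrightarrow{\id_u} u$. This is an $(n+1)$-simplex of $\oc$ lying in $X_{n+1}$ since it still ends at a chosen local terminal object. \emph{Then I would} apply $d_{n+1}^X = \varphi_n s_n d_{n+1}$ to this. The key point is that in the twisted top face map, the fiber we take over the last arrow $\id_u$ is governed by axiom \eqref{OCA:idfiber}: the fiber of an identity map is a chosen local terminal object, and taking the fiber over $\id$ should leave the rest of the data untouched. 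Concretely, $d_{n+1}$ strips off the appended $\id_u$, returning something isomorphic to $\sigma$, and then $s_n d_{n+1}$ followed by $\varphi_n$ must recover $\sigma$ exactly because the fiber functor acts trivially where the top map is an identity.

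\emph{The cleanest route} is probably to express $d_{n+1}^X s_n^X$ entirely in terms of operators on $\oc$ and the fiber functor, then use \eqref{OCA:idfiber} together with \eqref{OCA:uniquemap} to collapse the composite. One expects a computation of the shape $d_{n+1}^X s_n = \varphi_n s_n d_{n+1} s_n$, and since $d_{n+1} s_n = \id$ by a simplicial identity on $\oc$, this reduces to $\varphi_n s_n$ evaluated on (the inclusion of) $\sigma$; the final step is to argue $\varphi_n s_n|_{X_n} = \id_{X_n}$, which is exactly the statement that degenerating and then taking the fiber over the inserted identity arrow reconstructs the original simplex — a direct consequence of axiom \eqref{OCA:idfiber}.

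\emph{The main obstacle} I anticipate is bookkeeping the two competing degeneracies and making sure I am not conflating $s_n^X$ with the twisted top degeneracy: I must verify whether the intended top degeneracy of $X$ is the naive $s_n$ or a fiber-twisted variant, since the lemma's content depends on this. Assuming it is the naive one (which the preceding simplicial-identity verifications suggest, as no twisted top degeneracy was introduced), the real work is a clean application of \eqref{OCA:idfiber} to show the fiber over an appended identity is trivial, i.e.\ that $\varphi_n$ composed with the relevant section returns the input. Everything else is the routine simplicial identity $d_{n+1} s_n = \id$ on $\oc$.
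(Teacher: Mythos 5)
Your setup is right: the top degeneracy $s_n$ on $X$ is indeed the naive one (the paper defines all simplicial operators of $X$, except the top face maps, as restrictions of those on $\oc$), and your reduction
\[
d_{n+1}^X s_n \;=\; \varphi_n s_n d_{n+1} s_n \;=\; \varphi_n s_n|_{X_n}
\]
via the simplicial identity $d_{n+1}s_n = \id$ is a valid, slightly streamlined version of the paper's computation (the paper instead writes out $s_n\sigma$ and applies the pictorial formula for $d_{n+1}^X$ directly; both routes reduce to the same key claim). The gap is in your justification of that key claim. You assert that $\varphi_n s_n|_{X_n} = \id_{X_n}$ is ``a direct consequence of axiom \eqref{OCA:idfiber},'' on the grounds that the fiber taken is the one ``over the inserted identity arrow'' and that this ``leaves the rest of the data untouched.'' That is not how $\varphi_n$ acts here, and \eqref{OCA:idfiber} cannot deliver the conclusion. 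The element $s_n\sigma = (x_0 \to \cdots \to x_{n-1} \xrightarrow{\tau} u \xrightarrow{\id} u)$, viewed in $\udec(\oc)_n$, is a chain in the slice $\oc_{/u}$ whose objects are \emph{all} of the composites into the second-to-last object $u$, namely $\tau_{x_0}, \dots, \tau_{x_{n-1}}, \id_u$. Applying $\varphi_n$ therefore takes the fiber of every one of these maps, not just of the appended identity: the output is the chain $\varphi_0(\tau_{x_0}) \to \cdots \to \varphi_0(\tau_{x_{n-1}}) \to \varphi_0(\id_u)$ with maps $\varphi_1(f_i)$. Axiom \eqref{OCA:idfiber} only says that $\varphi_0(\id_u)$ is \emph{some} chosen local terminal object; it says nothing about the fibers $\varphi_0(\tau_{x_i})$ or the induced maps between them, so with it alone the step fails.

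What actually closes the argument is axiom \eqref{OCA:uniquemap}: $\varphi_0(\tau_{x_i}) = x_i$ and $\varphi_1(f_i \colon \tau_{x_{i-1}} \to \tau_{x_i}) = f_i$, which reconstructs every object and arrow of $\sigma$ (and also gives $\varphi_0(\id_u) = \varphi_0(\tau_u) = u$, since $\tau_u = \id_u$ when $u$ is a chosen local terminal). This is exactly what the paper's proof invokes, and it explains why the lemma holds for simplices of $X_n$ --- chains ending at a chosen local terminal --- rather than for arbitrary simplices of $\oc$: it is precisely because every map in sight is a $\tau$-map into $u$ that taking fibers recovers the original data. You do mention \eqref{OCA:uniquemap} once in passing, but the detailed reasoning you give rests on \eqref{OCA:idfiber}; swap the roles of these two axioms and your proof becomes correct and essentially identical to the paper's.
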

\begin{proof}
Letting $\sigma \in X_n$ be the simplex of \eqref{eq_C_n_elt}, we have $s_n\sigma$ is 
\[
\begin{tikzcd}[sep=small]
x_0 \arrow[r, "f_1"] & x_1 \rar{f_2} & \cdots \rar{f_{n-1}} & x_{n-1} \rar{\tau} & u \rar{\id} & u.
\end{tikzcd}
\]
The top face $d_{n+1}^Xs_n\sigma$ is given by the top line in the following.
\[
\begin{tikzcd}[column sep=small] 
y_0 \arrow[r, "p_1"]  \dar[no head, densely dotted]& y_1 \rar{p_2}  \dar[no head, densely dotted] & \cdots \rar{p_{n-1}} & y_{n-1} \dar[no head, densely dotted] \rar & u' \dar[no head, densely dotted]
\\
x_0 \arrow[r, "f_1"] \ar[drrr, bend right=15] & x_1 \rar{f_2}  \ar[drr, bend right=15] & \cdots \rar{f_{n-1}} & x_{n-1} \dar \rar & u \ar[dl,"\id"] \\
&  &  &  u
\end{tikzcd}
\]
Since $u$ is a chosen local terminal, $p_i = f_i$ for all $i$ by \eqref{OCA:uniquemap}, hence $d_{n+1}^Xs_n\sigma = \sigma$.
\end{proof}

Though the inclusions $X_n \subset \oc_n$ do not assemble into a simplicial map $X\to \oc$, they do induce a simplicial map $\udec(X) \hookrightarrow \udec(\oc)$ as the upper d\'ecalage discards top face maps.

\begin{proposition}
If $\oc$ is a unary operadic category and $X = U\oc$ is the simplicial set constructed above, then $\udec(X) \cong \oc$.
\end{proposition}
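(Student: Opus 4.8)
The plan is to exhibit an explicit levelwise bijection and check that it is simplicial. Recall $\udec(X)_n = X_{n+1}$, so an $n$-simplex of $\udec(X)$ is a chain
\[ x_0 \xrightarrow{f_1} \cdots \to x_n \xrightarrow{\tau} u \]
of length $n+1$ in $\oc$ whose final object $u$ is a chosen local terminal object, and whose final morphism is therefore forced to be the unique map $\tau = \tau_{x_n}$. I would define $\Phi_n \colon \udec(X)_n \to \oc_n$ by discarding this terminal morphism, sending the displayed simplex to $x_0 \to \cdots \to x_n$. This is a bijection: since each connected component contains a unique chosen local terminal object and a unique map into it from any given object, every chain $x_0 \to \cdots \to x_n$ in $\oc$ extends in exactly one way to a chain ending at a chosen local terminal object. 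The existence of chosen local terminal objects is the only structure used for bijectivity, and the inverse of $\Phi_n$ simply appends $\tau_{x_n}$.

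Next I would verify that $\Phi = (\Phi_n)$ is a map of simplicial sets. The key observation is that forming the upper d\'ecalage discards precisely the anomalous operator: on $X_{n+1}$ the face maps $d_0, \dots, d_n$ and all degeneracies $s_0, \dots, s_n$ agree with those of $\oc$ (only $d_{n+1}^X$, the operator defined through the fiber functor, is exceptional), and these are exactly the operators retained by $\udec$. Hence every structure map of $\udec(X)$ is a restriction of the corresponding structure map of $\oc$, and I would check $\Phi_{n-1} d_i = d_i \Phi_n$ and $\Phi_{n+1} s_i = s_i \Phi_n$ for $0 \le i \le n$ directly. For the degeneracies and for $d_i$ with $i < n$, the equality is immediate, since these operators act only on the portion $x_0 \to \cdots \to x_n$ that $\Phi$ retains and leave the terminal morphism untouched. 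The one case deserving a word is the top face $d_n$ of $\udec(X)$: on $X_{n+1}$ this equals the ordinary $d_n$ of $\oc$, which composes $\tau \circ f_n$ and drops $x_n$, landing on $x_0 \to \cdots \to x_{n-1} \xrightarrow{\tau f_n} u$, whereas $d_n$ on $\oc_n$ just drops $x_n$. Applying $\Phi$ discards the terminal morphism in either order, so the two results agree.

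Since each $\Phi_n$ is a bijection compatible with all face and degeneracy operators, $\Phi$ is an isomorphism $\udec(X) \cong \oc$. I do not anticipate a genuine obstacle here: the content is entirely in recognizing that the simplicial set $X = U\oc$ was constructed so that uncapping the local terminal object inverts the d\'ecalage, and that upper d\'ecalage throws away exactly the single face operator $d_\top^X$ whose definition involved the fiber functor $\varphi$. Everything else reduces to the standard bookkeeping of the face and degeneracy maps of a nerve.
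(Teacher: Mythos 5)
Your proof is correct and takes essentially the same approach as the paper: your $\Phi_n$ is exactly the paper's map $\udec(X)_n = X_{n+1} \subset \oc_{n+1} \xrightarrow{d_{n+1}} \oc_n$ (discarding the terminal morphism), with the same inverse given by adjoining $\tau_{x_n}$. The only cosmetic difference is that the paper gets simplicial compatibility for free by expressing this as the composite of the simplicial inclusion $\udec(X) \hookrightarrow \udec(\oc)$ with the canonical augmentation $d_\top \colon \udec(\oc) \to \oc$, whereas you verify the same identities by hand using the identical observation that upper d\'ecalage discards the one anomalous operator $d_\top^X$.
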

\begin{proof}
The composite of $\udec(X) \hookrightarrow \udec(\oc)$ with the canonical augmentation $d_\top \colon \udec(\oc) \to \oc$ is an isomorphism. 
By inspection, the map \[ \udec(X)_n = X_{n+1} \subset \oc_{n+1} \xrightarrow{d_{n+1}} \oc_n \] is a bijection, whose inverse is given by adjoining the map to a chosen local terminal object.
\end{proof}

As a consequence, $X$ is upper 2-Segal.
This simplicial set is necessarily isomorphic to the one given in \cite{GarnerKockWeber:OCD}.
The reader is encouraged to compare our description of $X$ with the low-dimensional pictures from \cite[page 13]{GarnerKockWeber:OCD}.
Generally, the full data of an $n$-simplex of $X$ is a grid consisting of objects and morphisms of $\oc$, such that each row is the fiber of the one below.
\[
\begin{tikzcd}
x_0^{n-1} \dar[no head, densely dotted] 
\\
x_0^{n-2} \arrow[r, "f_1^{n-2}"]  \dar[no head, densely dotted]& x_1^{n-2}   \dar[no head, densely dotted] 
\\
\vdots \dar[no head, densely dotted]& \vdots   \dar[no head, densely dotted] & \cdots & \vdots  \dar[no head, densely dotted]  
\\
x_0^1 \arrow[r, "f_1^1"]  \dar[no head, densely dotted]& x_1^1 \rar{f_2^1}  \dar[no head, densely dotted] & \cdots \rar{f_{n-3}^1} & x_{n-3}^1 \dar[no head, densely dotted] \rar{f_{n-2}^1} & x_{n-2}^1 \dar[no head, densely dotted] 
\\
x_0^0 \arrow[r, "f_1^0"] & x_1^0 \rar{f_2^0} & \cdots \rar{f_{n-3}^0} & x_{n-3}^0 \rar{f_{n-2}^0} & x_{n-2}^0 \rar["f_{n-1}^0"] & x_{n-1}^0 
\end{tikzcd}
\]
Identifying the 1-simplices of $X$ with objects of $\oc$, the spine of this $n$-simplex is
\[ \begin{tikzcd}
u^n \rar{x_0^{n-1}} & u^{n-1} \rar{x_1^{n-2}} & \cdots \rar{x_{n-2}^1} & u^1 \rar{x_{n-1}^0} & u^0 
\end{tikzcd} \]
where $u^i$ is the chosen local terminal in the $i$th row and/or the fiber of $\id_{x_{n-i}^{i-1}}$.

\section{Fiber inclusions via lower d\'ecalage}\label{sec fiber inclusions}

The following counterpart to \cref{gkw theorem} appears in \cite[Remark 13]{GarnerKockWeber:OCD}.
\begin{theorem}[Garner--Kock--Weber]\label{thm udec}
The upper d\'ecalage functor induces an equivalence between the category of 2-Segal sets and that of unary operadic categories satisfying the blow-up axiom.
\end{theorem}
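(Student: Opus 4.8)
The plan is to deduce the statement as a restriction of the equivalence already recorded in \cref{gkw theorem}. The category of $2$-Segal sets is a full subcategory of upper $2$-Segal sets, and unary operadic categories satisfying the blow-up axiom form a full subcategory of all unary operadic categories; consequently, once I know that $\udec$ matches these two full subcategories at the level of objects, the equivalence restricts automatically. Thus the whole content is to show, for an upper $2$-Segal set $X$ with associated operadic category $\oc = \udec(X)$, that $X$ is lower $2$-Segal if and only if $\oc$ satisfies the blow-up axiom.

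I would first translate both conditions into the language of pullback squares. By the path space criterion (\cref{lem upper 2-segal char}), $X$ is lower $2$-Segal precisely when $\ldec(X)$ is a category. On the other side, $\oc$ is a category and, since lower d\'ecalage sends a category to the sum of its coslices, $\udec(\oc)$ is a category as well; hence the fiber functor $\varphi\colon \udec(\oc)\to\oc$ is a functor between categories, and (\cref{def wbu}) the blow-up axiom --- that $\varphi$ be a discrete opfibration --- is equivalent to the single cartesian square
\[
\begin{tikzcd}
\udec(\oc)_1 \rar{\varphi_1} \dar[swap]{d_1} & \oc_1 \dar{d_1} \\
\udec(\oc)_0 \rar[swap]{\varphi_0} & \oc_0,
\end{tikzcd}
\]
the vertical maps being source operators. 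Using the description of $\oc\mapsto U\oc$ from \cref{sec unary as UD} together with \cref{app upper dec}, the fiber functor is identified with the top face operator of $X$ on the double d\'ecalage $\udec(\oc)=\udec(\udec(X))$; the displayed square thereby becomes exactly the degree-$2$ Segal square of $\ldec(X)$. So the blow-up axiom says precisely that $\ldec(X)$ satisfies the Segal condition in degree $2$, and it remains to upgrade this single instance to the assertion that $\ldec(X)$ is a category.

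The enabling observation is that $\ldec(X)$ is automatically upper $2$-Segal. Indeed, upper and lower d\'ecalage commute, so $\udec(\ldec(X)) \cong \ldec(\udec(X)) = \ldec(\oc)$, and the right-hand side is a category (a sum of coslices of the category $\oc$); applying the path space criterion to $\ldec(X)$ converts this into upper $2$-Segality of $\ldec(X)$. Writing $Y = \ldec(X)$, I am therefore reduced to the self-contained lemma: an upper $2$-Segal set $Y$ for which $Y_2 \to Y_1\times_{Y_0}Y_1$ is a bijection is a category. I would prove this by showing the spine maps $Y_n \to Y_1\times_{Y_0}\dots\times_{Y_0}Y_1$ are bijections, by induction on $n$. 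The cases $n\le 2$ are the hypotheses, and for the inductive step I use the Grothendieck--Segal pullbacks of the category $\udec(Y)$, in the form $Y_{n+1}\cong Y_n\times_{Y_{n-1}}Y_n$ via $(d_0,d_n)$, together with the inductive identifications $Y_n\cong \mathrm{Sp}_n$; substituting and chasing how the matching maps $d_{n-1}$ and $d_0$ act on spines --- the former composing the last two edges (legal by the degree-$2$ condition), the latter deleting the first edge --- shows the fibre product telescopes to $\mathrm{Sp}_{n+1}$.

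The main obstacle is exactly this bootstrap, since the blow-up axiom hands over only the degree-$2$ Segal condition for $\ldec(X)$ whereas being a category demands it in every degree. The point that makes it go through is that upper $2$-Segality of $\ldec(X)$ is free, coming from the commutation of the two d\'ecalages and the fact that $\oc$ is already a category; granting this, the spine induction is routine. In writing the argument carefully I would be most attentive to pinning down the identification of $\varphi$ with the top face operator (the role of \cref{app upper dec}) and to verifying that $d_{n-1}$ and $d_0$ really act on spines as stated, as this is what forces the fibre product to be $\mathrm{Sp}_{n+1}$ rather than a different gluing.
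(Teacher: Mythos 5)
Your proposal is correct, and its skeleton matches the paper's: both restrict the equivalence of \cref{gkw theorem} to full subcategories, and both reduce the theorem to showing that, for an upper 2-Segal set $X$ with $\oc=\udec(X)$, the blow-up axiom for $\varphi$ is equivalent to $X$ being lower 2-Segal, via the identification $\varphi_n = d_{n+2}\colon X_{n+2}\to X_{n+1}$ of the fiber functor with top face operators. Where you genuinely diverge is in bridging the gap between ``one cartesian square'' and ``a condition in every degree.'' The paper observes that the discrete-opfibration squares for $\varphi$ in \emph{all} degrees $n\geq 1$ are literally squares built from face operators of $X$, and then invokes \cite[Lemma 3.6]{GKT1} to identify ``pullbacks for all $n\geq 1$'' with the lower 2-Segal condition (implicitly also using that a functor between categories is a discrete opfibration iff all these squares, not just the first, are cartesian). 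You instead use only the minimal $n=1$ square, recognize it as the degree-2 Segal square of $\ldec(X)$, and supply your own bootstrap: $\ldec(X)$ is automatically upper 2-Segal since $\udec\ldec=\ldec\udec$ and $\ldec(\oc)$ is a category, and an upper 2-Segal set with bijective degree-2 Segal map is a category by spine induction. Your induction is sound: the upper 2-Segal squares with $i=n-1$ give $Y_{n+1}\cong Y_n\times_{Y_{n-1}}Y_n$ via $(d_n,d_0)$ over $(d_0,d_{n-1})$, and the actions of $d_0$ (drop the first edge) and $d_{n-1}$ (compose the last two edges, legitimized by the degree-2 bijection) do make the fiber product telescope to the spine. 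The trade-off is clear: the paper's argument is a few lines but leans on an external lemma, while yours is longer but self-contained, and your intermediate lemma plus the ``free'' upper 2-Segality of $\ldec(X)$ have some independent interest. One small slip to fix: you justify $\udec(\oc)$ being a category by citing that \emph{lower} d\'ecalage yields coslices; the fact needed there is that \emph{upper} d\'ecalage of a category is the sum of its slices (both facts are stated in the paper, so this is cosmetic).
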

\begin{proof}
Suppose $X$ is an upper 2-Segal set and $\oc = \udec (X)$ is the associated unary operadic category.
For $n\geq 1$, the three squares below are equal.
\[ \begin{tikzcd}
\udec(\oc)_n \rar{d_\top} \dar[swap]{\varphi_n} & \udec(\oc)_{n-1} \dar{\varphi_{n-1} }
& 
\oc_{n+1} \rar{d_n} \dar[swap]{\varphi_n} & \oc_n \dar{\varphi_{n-1} }
&
X_{n+2} \rar{d_n} \dar[swap]{d_{n+2}} & X_{n+1} \dar{d_{n+1}}
\\
\oc_n \rar[swap]{d_\top} & \oc_{n-1} 
&  
\oc_n \rar[swap]{d_n} & \oc_{n-1} 
& 
X_{n+1} \rar[swap]{d_n} & X_n 
\end{tikzcd} \]
The left square is a pullback for all $n\geq 1$ if and only if $\varphi$ is a discrete opfibration. 
By \cite[Lemma 3.6]{GKT1}, the square on the right is a pullback for all $n\geq 1$ if and only if $X$ is lower 2-Segal.
Applying \cref{gkw theorem}, the result follows.
\end{proof}

Suppose $\oc$ is a unary operadic category, which moreover satisfies blow-up axiom.
By \cref{thm udec}, the `undecking' $U\oc = X$ is a 2-Segal set.
Hence the lower d\'ecalage $\ldec(X)$ is also a category by \cref{lem upper 2-segal char}.
In this section we interpret $\ldec(X)$ as a category of formal fiber inclusions of $\oc$.
Below we will use $d^\bot_i, s^\bot_j$ to indicate the simplicial operators on $\ldec(X)$.

\begin{definition}
If $\oc$ is a unary operadic category satisfying the blow-up axiom, then the \emph{category of fiber inclusions}, $\fc$, has the same objects as $\oc$ and the set of maps from $x$ to $y$ in $\fc$ coincide with the set of maps in $\oc$ with domain $y$ and fiber $x$.
We write $[g] \colon x \dashrightarrow y$ for the morphism in $\fc$ associated with $g \colon y \to z$ having fiber $x$.
More formally, $\fc \cong \ldec(U\oc)$.
\end{definition}

That is, the same element $\sigma = (y \xrightarrow{g} z \to u)$ in $X_2$ is interpreted as $g  \colon y \to z$ in $\oc_1 \cong \udec(X)_1$, and as $[g] \colon \varphi_0(g) \dashrightarrow y$ in $\fc_1 \cong \ldec(X)_1$.
Indeed, $d_1^\bot \sigma = d_2^X \sigma = (\varphi_0(g) \to u')$ and $d_0^\bot \sigma = d_1\sigma = (y\to u')$, so $\sigma$ represents a map with the indicated domain and codomain.

\begin{proposition}\label{prop idens in fc}
Let $\oc$ be a unary operadic category satisfying the blow-up axiom, and $\fc$ its category of fiber inclusions.
The map $\tau_x \colon x \to u$ in $\oc$ to a chosen local terminal object induces the identity map $\id^\fc_x = [\tau_x] \colon x \dashrightarrow x$ in $\fc$.
\end{proposition}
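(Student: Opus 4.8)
The plan is to verify directly that the morphism $[\tau_x] \colon x \dashrightarrow x$ coincides with the identity on $x$ in $\fc \cong \ldec(X)$, where $X = U\oc$. Since identities in $\ldec(X)$ are by definition in the image of the bottom degeneracy $s_0^\bot$, which under the dimension shift is $s_1^X \colon X_1 \to X_2$, I would identify the $2$-simplex representing $[\tau_x]$ and compare it with $s_1^X$ applied to the $1$-simplex representing the object $x$.

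First I would pin down the relevant simplices. Under the identification of \cref{sec unary as UD}, the object $x$ of $\oc$ corresponds to the $1$-simplex $(x \xrightarrow{\tau_x} u)$ in $X_1$. The map $[\tau_x]$ is, by the definition of $\fc$, the morphism in $\fc$ associated to $\tau_x \colon x \to u$ regarded as an object of $\oc$, i.e.\ as a $2$-simplex of $X$; concretely this $2$-simplex is $\sigma = (x \xrightarrow{\tau_x} u \xrightarrow{\id_u} u)$, whose fiber $\varphi_0(\tau_x \colon x\to u) = x$ by \eqref{OCA:uniquemap}. So $\sigma$ has domain $x$ (it is $d_1^\bot\sigma = d_2^X\sigma$) and codomain $x$ (it is $d_0^\bot\sigma = d_1\sigma$), as required for it to represent a self-map of $x$.

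The core computation is then to show $s_1^X(x \xrightarrow{\tau_x} u) = \sigma$. I would expand $s_1^X$ on the $1$-simplex $(x \xrightarrow{\tau_x} u)$. Away from top face maps the simplicial operators of $X$ agree with those of $\oc$, and $s_1$ is not a top degeneracy for $X_1 \to X_2$, so $s_1^X$ inserts an identity and produces $(x \xrightarrow{\tau_x} u \xrightarrow{\id_u} u)$, which is exactly $\sigma$. This identifies $[\tau_x]$ with an element in the image of $s_0^\bot$, hence with the identity endomorphism $\id_x^\fc$ in $\ldec(X)$.

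The main obstacle is bookkeeping rather than conceptual: I must be careful that the degeneracy $s_1$ on $X_1 \to X_2$ really is the ordinary one (not affected by the exceptional top-face formula) and that the resulting simplex genuinely has both faces equal to $(x \to u')$ with $u' = \varphi_0(\id_x)$ the correct chosen local terminal object—here \eqref{OCA:idfiber} guarantees $u'$ is chosen local terminal and \eqref{OCA:uniquemap} guarantees the fiber of $\tau_x$ is $x$ itself, so no unexpected object intervenes. Once these identifications are confirmed, the equality $[\tau_x] = \id_x^\fc$ follows immediately.
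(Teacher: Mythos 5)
Your proposal is correct and takes essentially the same approach as the paper's own two-line proof: identify the object $x$ with the $1$-simplex $(x \xrightarrow{\tau_x} u) \in \ldec(X)_0 = X_1$, note that the identity on it is $s_0^\bot(x \to u) = s_1(x\to u) = (x \to u \to u)$, and recognize this $2$-simplex as $[\tau_x]$. (One immaterial slip in your final bookkeeping remark: the chosen local terminal appearing in the top face $d_2^X(x \to u \to u)$ is $\varphi_0(\id_u)$ rather than $\varphi_0(\id_x)$, but since there is a unique map from $x$ to a chosen local terminal, both faces equal $(x \xrightarrow{\tau_x} u)$ regardless.)
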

\begin{proof}
The object $x$ is represented by $\tau_x \colon x\to u$ in $\ldec(X)_0 = X_1$.
The identity is given by $s_0^\bot(x\to u) = s_1(x\to u) = (x\to u \to u)$, which corresponds to $[\tau_x] \colon x \dashrightarrow x$ in $\fc$.
\end{proof}

\begin{proposition}\label{prop fiber composition}
Let $\oc$ be a unary operadic category satisfying the blow-up axiom, and $\fc$ its category of fiber inclusions.
Suppose we have a composable pair of maps 
\[ \begin{tikzcd}[sep=small]
x \rar[dashed]{[f]} & y \rar[dashed]{[g]} & z
\end{tikzcd} \]
in $\fc$.
That is, $f \colon y \to a$ in $\oc$ has fiber $x$, and $g\colon z \to b$ in $\oc$ has fiber $y$.
Then by the weak blow-up axiom there is a unique factorization $g = hk$ as in the following diagram so that $\varphi_1(k) = f$. 
The composite in $\fc$ is given by $[g] \circ [f] = [k] \colon x \dashrightarrow z$.
\[ \begin{tikzcd}[row sep=small, column sep=tiny]
y \ar[rr,"f"] \dar[no head, densely dotted] & & a \dar[no head, densely dotted]\\
z \ar[rr,"k"] \ar[dr,"g"'] & & c \ar[dl,"h"]  \\
& b
\end{tikzcd} \]
\end{proposition}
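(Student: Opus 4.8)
The plan is to recognize composition in $\fc \cong \ldec(U\oc)$ as ordinary composition in the category $\ldec(X)$, where $X = U\oc$ is $2$-Segal by \cref{thm udec} and hence $\ldec(X)$ is a category by (the dual of) \cref{lem upper 2-segal char}. In such a category the composite of a composable pair is computed as the inner face of the \emph{unique} $2$-simplex filling that pair. So I would simply exhibit one $2$-simplex of $\ldec(X)$ — equivalently a $3$-simplex of $X$ — whose two outer faces are $[f]$ and $[g]$; its inner face is then forced to be $[g]\circ[f]$.

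First I would produce the factorization. Viewing $g\colon z\to b$ as an object of $\udec(\oc)\cong\oc_{/b}$ with fiber $\varphi_0(g)=y$, the map $f\colon y\to a$ starts at $\varphi_0(g)$, so the discrete opfibration $\varphi$ (the blow-up axiom) lifts it uniquely to a morphism in $\oc_{/b}$ out of $g$. This morphism is exactly a commuting triangle $g=hk$ with $k\colon z\to c$, $h\colon c\to b$, and $\varphi_1(k\colon g\to h)=f$; in particular $\varphi_0(h)=a$. Applying the first equation of \eqref{OCA:fibercomp} to this triangle identifies the fiber of $k$ with the fiber of $\varphi_1(k)=f$, giving $\varphi_0(k)=x$ and thereby justifying the notation $[k]\colon x\dashrightarrow z$.

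Next I would assemble the $3$-simplex
\[ \Sigma = \bigl( z \xrightarrow{k} c \xrightarrow{h} b \xrightarrow{\tau} u \bigr) \in X_3 = \ldec(X)_2, \]
which lies in $X_3$ because its terminal edge is $\tau$. Its d\'ecalage faces are $d_i^\bot = d_{i+1}^X$. The two easy ones are the ordinary faces of $\oc$: the bottom face $d_0^\bot\Sigma = d_1\Sigma = (z\xrightarrow{g} b\to u)=\sigma_g$ recovers $[g]$ since $hk=g$, and the inner face $d_1^\bot\Sigma = d_2\Sigma = (z\xrightarrow{k} c\to u)=\sigma_k$ represents $[k]\colon x\dashrightarrow z$.

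The step requiring genuine care is the top face $d_2^\bot\Sigma = d_3^X\Sigma$, the one non-obvious simplicial operator on $X$, and this is where the main obstacle lies: one must keep the bookkeeping of $d_3^X$ aligned with the opfibration lift so that its induced fiber map is exactly $f$. Using the explicit fiber formula for $d_n^X$ from \cref{sec unary as UD}, the fibers of the composites $z\to b$, $c\to b$, $\id_b$ are $\varphi_0(hk)=y$, $\varphi_0(h)=a$, and a chosen local terminal $u'$ respectively, while the induced map on fibers is precisely $\varphi_1(k\colon g\to h)=f$; hence $d_3^X\Sigma=(y\xrightarrow{f} a\to u')=\sigma_f$ recovers $[f]$. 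With both outer faces identified as $[f]$ and $[g]$, the fact that $\ldec(X)$ is a category forces $[g]\circ[f]$ to be the inner face $d_1^\bot\Sigma = [k]$, which completes the argument; apart from the top-face computation, everything is routine face-map bookkeeping.
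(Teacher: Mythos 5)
Your proof is correct and is essentially the paper's own argument: both identify $[g]\circ[f]$ as the inner face $d_1^\bot$ of the unique $3$-simplex $(z \xrightarrow{k} c \xrightarrow{h} b \to u)$ of $U\oc$ filling the composable pair $([f],[g])$ in $\fc \cong \ldec(U\oc)$. The only difference is packaging---the paper produces this filler (and with it the factorization $g=hk$) in one stroke from the lower 2-Segal pullback along $(d_3^X, d_1)$, whereas you first construct the lift via the discrete-opfibration form of the blow-up axiom and then verify the three faces by hand, including the $d_3^X$ computation; these are two readings of the same square, as the proof of \cref{thm udec} makes explicit.
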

\begin{proof}
Since $X$ is lower 2-Segal, the following square is a pullback
\[ \begin{tikzcd}
X_3 \rar{d_1} \dar[swap]{d_3^X} \ar[dr, phantom, "\lrcorner" very near start] & X_2 \dar{d_2^X}  \\
X_2 \rar[swap]{d_1} & X_1 
\end{tikzcd} \] 
so there is a unique element $\sigma \in X_3$ of the form
\[ \begin{tikzcd}[sep=small]
z \rar{k} & c \rar{h} & b \rar & u
\end{tikzcd} \]
such that $d_0^\bot \sigma = d_1\sigma = (z \xrightarrow{g} b \to u)$ and $d_2^\bot \sigma =d_3^X\sigma = (y\xrightarrow{f} a \to u')$.
Hence $\sigma \in \ldec(X)_2$ represents the composable pair $([g], [f])$.
The composite is given by
\[
  d_1^\bot \sigma = d_2 \sigma = (\begin{tikzcd}[sep=small,cramped]
z \rar{k} & c \rar & u
\end{tikzcd}).
\]
This element corresponds to $[k] \colon x \dashrightarrow z$ in $\fc_1$.
\end{proof}

\section{Isomorphisms in the category of fiber inclusions}\label{sec fiber isos}

In this section, we classify the isomorphisms in $\fc$.
These turn out to be the same as the category of \emph{virtual isomorphisms} of Batanin and Markl, at least when virtual isomorphisms are defined. These virtual isomorphisms are used in constructing indexing categories for Markl operads over (not necessarily unary) operadic categories.

\begin{lemma}\label{lem right invertible}
Let $\oc$ be a unary operadic category satisfying the blow-up axiom and $\fc$ its category of fiber inclusions.
Suppose $g \colon x \to z$ is a morphism of $\oc$ with fiber $y$.
Then $[g] \colon y \dashrightarrow x$ in $\fc$ admits a right inverse if and only if $g$ factors through a chosen local terminal object.
\end{lemma}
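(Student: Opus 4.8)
The plan is to reduce the question to the description of composites in $\fc$ from \cref{prop fiber composition} together with the identification of identities from \cref{prop idens in fc}. By definition of $\fc$, any right inverse of $[g] \colon y \dashrightarrow x$ has the form $r = [f] \colon x \dashrightarrow y$ for a (unique) map $f \colon y \to a$ of $\oc$ with fiber $x$; the condition to be a right inverse is that the composite $[g] \circ [f]$ equals $\id^\fc_x = [\tau_x]$.

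First I would treat the forward direction. Supposing such an $r = [f]$ exists, I apply \cref{prop fiber composition} to the composable pair $x \xrightarrow{[f]} y \xrightarrow{[g]} x$. This produces the unique factorization $g = hk$ in $\oc$, with $k \colon x \to c$ and $h \colon c \to z$ satisfying $\varphi_1(k) = f$, and it identifies the composite as $[g] \circ [f] = [k]$. Since the assignment sending a morphism $m$ of $\oc$ to $[m]$ is a bijection $\oc_1 \cong \fc_1$, the equality $[k] = [\tau_x]$ forces $k = \tau_x$. Hence $c$ is the chosen local terminal object $u$ in the component of $x$, and $g = h \circ \tau_x$ factors through $u$.

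For the converse, suppose $g = h \circ \tau_x$ with $\tau_x \colon x \to u$ and $h \colon u \to z$. Regarding $\tau_x$ as a morphism from $g$ to $h$ in the slice $\oc_{/z} \subset \udec(\oc)$, I set $f \coloneqq \varphi_1(\tau_x)$, a map with domain $\varphi_0(g) = y$. By the first identity of \eqref{OCA:fibercomp} together with \eqref{OCA:uniquemap}, its fiber is $\varphi_0(f) = \varphi_0(\varphi_1(\tau_x)) = \varphi_0(\tau_x) = x$, so $[f] \colon x \dashrightarrow y$ is a genuine morphism of $\fc$. As $g = h \circ \tau_x$ is a factorization of $g$ with $\varphi_1(\tau_x) = f$, the uniqueness clause of \cref{prop fiber composition} yields $[g] \circ [f] = [\tau_x] = \id^\fc_x$, so $r = [f]$ is the sought right inverse.

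The step I expect to be most delicate is the bookkeeping with the fiber functor: correctly matching which map plays the role of $[f]$ and which plays $[g]$ in \cref{prop fiber composition}, and confirming in the converse that the candidate $f = \varphi_1(\tau_x)$ has fiber exactly $x$ rather than merely some local terminal object. Both points rely on the blow-up axiom, which underlies \cref{prop fiber composition} and ensures that the relevant factorization of $g$ is unique.
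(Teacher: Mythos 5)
Your proof is correct. The two directions work: in the forward direction, writing the putative right inverse as $[f]$ for a unique $f \colon y \to a$ with fiber $x$, applying \cref{prop fiber composition} to the pair $([f],[g])$, and using that $m \mapsto [m]$ is a bijection $\oc_1 \cong \fc_1$ (so $[k]=[\tau_x]$ forces $k=\tau_x$) is airtight; in the converse, your verification via \eqref{OCA:fibercomp} and \eqref{OCA:uniquemap} that $f = \varphi_1(\tau_x \colon g \to h)$ has fiber exactly $x$, followed by the uniqueness clause of \cref{prop fiber composition}, is exactly what is needed.

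Your route differs from the paper's in presentation rather than substance. The paper argues simplicially: for the ``if'' direction it builds the explicit $3$-simplex $x \to u \to z \to u$ in $X_3 = \ldec(U\oc)_2$ and computes its faces (including the nontrivial top face, which is where $\varphi$ enters), and for the ``only if'' direction it unpacks a witnessing $3$-simplex to read off $a = u$. In effect the paper re-runs, in this special case, the same lower-2-Segal pullback argument that proves \cref{prop fiber composition}, whereas you invoke \cref{prop fiber composition} and \cref{prop idens in fc} as black boxes and stay entirely inside the category $\fc$. Your version is more modular and makes the role of the blow-up axiom (unique factorization $g = hk$ with $\varphi_1(k)=f$) transparent, at the cost of not producing the explicit simplex: the paper's concrete construction, labelled \eqref{eq construction of inverse}, and the map $h = \varphi_1(\tau_x \colon g \to k)$ appearing in it are cited again verbatim in the proof of \cref{thm fiber isos}. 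Since your converse produces the same right inverse $[\varphi_1(\tau_x)]$, that later use remains compatible with your argument, but a write-up replacing the paper's proof would need to restate that construction where \cref{thm fiber isos} refers to it.
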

\begin{proof}
A composable pair of morphisms
\[ \begin{tikzcd}[column sep=small]
x \rar["\tau_x"] & u \rar{k} & z \rar["\tau_z"] & u,
\end{tikzcd} \]
denoted by $\sigma$ in $\ldec(X)_2 = X_3$, has that $d_2^\bot\sigma = d_3^X\sigma$ is the top line in
\begin{equation}\label{eq construction of inverse}
\begin{tikzcd}[sep=small]
y \dar[no head, densely dotted] \rar["h"] & w \rar \dar[no head, densely dotted] & u' \dar[no head, densely dotted]\\
x \rar["\tau_x"] & u \rar["k"] & z \rar["\id_z"] & z.
\end{tikzcd} \end{equation}
We also have $d_0^\bot\sigma = d_1 \sigma = (x \to z \to u)$ and $d_1^\bot\sigma = d_2\sigma = (x \to u \to u) = s_1\tau_x = s_0^\bot (\tau_x)$.
It follows that $[k\tau_x] \circ [h]$ is an identity.
Hence if $g \colon x \to z$ factors through a chosen local terminal object, then $[g]$ admits a right inverse.
On the other hand, if $[g]$ admits a right inverse then there is $\sigma \in \ldec(X)_2 = X_3$ witnessing this fact; write this element $\sigma$ as 
\[ \begin{tikzcd}[column sep=small]
x \rar["\ell"] & a \rar{k} & z \rar & u.
\end{tikzcd} \]
Since $d_0^\top \sigma = d_1\sigma = (x \to z \to u)$, we have $k\ell = g$ in $\oc$.
But $d_1^\top \sigma = d_2\sigma = (x\to a \to u)$ is assumed to be $(x \to u \to u)$, so we must have $a=u$.
Thus $g$ factors through a chosen local terminal.
\end{proof}

\begin{theorem}\label{thm fiber isos}
Let $\oc$ be a unary operadic category satisfying the blow-up axiom and $\fc$ its category of fiber inclusions.
Suppose $g \colon x \to z$ is a morphism of $\oc$ with fiber $y$.
Then $[g] \colon y \dashrightarrow x$ in $\fc$ is invertible if and only if $z$ is a local terminal object of $\oc$.
\end{theorem}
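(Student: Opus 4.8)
The plan is to view $[g]$ as a morphism of $\fc \cong \ldec(U\oc)$ and to combine \cref{lem right invertible} with the elementary fact that a morphism $\phi$ in any category is invertible precisely when it admits a right inverse $\psi$ that is itself right-invertible (if $\phi\psi = \id$ and $\psi\psi' = \id$, then $\phi = (\phi\psi)\psi' = \psi'$, so $\psi\phi = \psi\psi' = \id$). First I would record the data coming from right-invertibility: if $[g]$ has a right inverse, then by \cref{lem right invertible} we have $g = k\tau_x$ with $k \colon u \to z$, and the right inverse is the explicit $[h]$ whose underlying map is the fiber comparison $h = \varphi_1(\tau_x \colon g \to k) \colon y \to w$ with $w = \varphi_0(k)$. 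As $\tau_z k \colon u \to u$ is necessarily $\id_u$, the map $k$ is a section of $\tau_z$; hence $k$ is invertible if and only if $z$ is a local terminal object (being the codomain of a section of $\tau_z$ into a chosen local terminal). Everything thus reduces to deciding when $k$ is an isomorphism.

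For the implication $(\Leftarrow)$ this is immediate. If $z$ is local terminal then $\tau_z$ is invertible, so $k = \tau_z^{-1}$ is too, and by \cref{isos and loc term fibers} its fiber $w = \varphi_0(k)$ is a local terminal object. Then $\tau_w$ is invertible, so $h = \tau_w^{-1}\tau_y$ factors through the chosen local terminal $u_y$; by \cref{lem right invertible} again $[h]$ has a right inverse, and the reduction above shows $[g]$ is invertible.

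For $(\Rightarrow)$ I would start from an invertible $[g]$, whose inverse is forced to be the right inverse $[h]$. Being invertible, $[h]$ is in particular right-invertible, with right inverse $[g]$ itself; feeding the relation $[h]\circ[g] = \id_y$ through \cref{prop fiber composition} produces the symmetric data $h = k'\tau_y$ with $k' \colon u_y \to w$, together with $g = \varphi_1(\tau_y \colon h \to k')$ and the crucial identification $\varphi_0(k') = z$. By the same elementary reasoning, $k'$ is a section of $\tau_w$, so $k'$ is invertible if and only if $w$ is local terminal. It remains to prove that $k$ (equivalently $k'$) is invertible.

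This final step is where I expect the genuine difficulty. The fiber data assembled so far is perfectly symmetric — $k \colon u \to z$ is a section of $\tau_z$ with $\varphi_0(k) = w$, while $k' \colon u_y \to w$ is a section of $\tau_w$ with $\varphi_0(k') = z$ — so \cref{isos and loc term fibers} only yields the circle ``$z$ local terminal $\iff k$ iso $\iff w$ local terminal $\iff k'$ iso,'' which by itself proves nothing; direct attempts to compute $\varphi_0(k\tau_z)$ from the axioms \eqref{OCA:uniquemap} and \eqref{OCA:fibercomp} return only tautologies. What breaks the symmetry is that \emph{both} composites $[g][h]$ and $[h][g]$ are identities, not merely that $k$ and $k'$ exist. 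I would therefore descend to the $2$-Segal set $X = U\oc$, where the two relations are witnessed by $3$-simplices $\Sigma = (x \xrightarrow{\tau_x} u \xrightarrow{k} z \xrightarrow{\tau_z} u)$ and $\Sigma' = (y \xrightarrow{\tau_y} u_y \xrightarrow{k'} w \xrightarrow{\tau_w} u_y)$ with degenerate middle faces, and exploit the lower and upper $2$-Segal pullback squares — equivalently, the analysis of isomorphisms inside the category $\sd(X)$ made available by the strict factorization systems of \cref{prop canonical fact} and \cref{second sfs} — to force the section $k$ of $\tau_z$ to be a two-sided inverse. Concretely, the goal of this comparison is to show that $\varphi_0(k\tau_z)$ is a chosen local terminal, so that $k\tau_z = \id_z$ by \cref{isos and loc term fibers}; carrying out this simplicial argument, rather than the symmetric fiber bookkeeping, is the main obstacle.
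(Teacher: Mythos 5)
Your backward direction ($\Leftarrow$) is correct and is essentially the paper's own argument: when $z$ is local terminal, $g=k\tau_x$ with $k$ invertible, the fiber $w=\varphi_0(k)$ is local terminal by \cref{isos and loc term fibers}, so $h$ factors through a chosen local terminal, and two applications of \cref{lem right invertible} together with your elementary right-inverse observation give invertibility of $[g]$. Your reduction of the forward direction also matches the paper's setup (the inverse must be the $[h]$ of \cref{lem right invertible}, and applying that lemma to $[h]$ yields $h=j\tau_y$, $g=\varphi_1(\tau_y\colon h\to j)$, $\varphi_0(j)=z$). But the forward direction itself is not proved: after correctly diagnosing that the symmetric fiber bookkeeping is circular and that both relations $[g]\circ[h]=\id_x$ and $[h]\circ[g]=\id_y$ must be used at once, you defer the decisive step to a ``simplicial argument'' that you explicitly call the main obstacle and never carry out. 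That deferred step is the entire content of the theorem, so what you have is a reduction plus a plan, not a proof.

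The idea you are missing is a concrete construction that converts the two composition identities into an equation between elements of $X_4$, where $X=U\oc$. The paper does this with the blow-up axiom: since $\varphi$ is a discrete opfibration, there is a \emph{unique} $\sigma'=(x\xrightarrow{g}z\xrightarrow{m}a\xrightarrow{n}z)$ in $\oc_3=\udec(\oc)_2$ with $d_2\sigma'=s_1g$ (so $nm=\id_z$) and $\varphi_2(\sigma')=(y\xrightarrow{\tau_y}u'\xrightarrow{j}w)$. Appending $\tau\colon z\to u$ gives $\sigma\in X_4=\ldec(X)_3$; a computation with \eqref{OCA:fibercomp} and \cref{lem a5 general} shows the spine of $\sigma$ in $\fc$ is exactly $y\xrightarrow{[g]}x\xrightarrow{[h]}y\xrightarrow{[g]}x$. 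Only now do both relations act together: since $[g]\circ[h]=\id_x$, the face $d_2^\bot\sigma$ must equal $s_1^\bot[g]=(x\xrightarrow{g}z\to u\to u)$, while direct computation gives $d_2^\bot\sigma=d_3\sigma=(x\xrightarrow{g}z\xrightarrow{m}a\to u)$; comparing forces $a=u$ and $m=\tau_z$, so $\id_z=nm=n\tau_z$ and $z\cong u$ is local terminal. Your own sketch (gluing the two witnessing $3$-simplices $\Sigma$, $\Sigma'$) can in fact be completed without the explicit lift, but it still requires an argument you do not give: because $\fc$ is a category, the chain $([g],[h],[g])$ is represented by a unique element of $X_4$, whose \emph{inner} $X$-faces $d_1$ and $d_2$ are computed by composition in $\oc$; equating $d_1\sigma$ with $\Sigma$ and $d_2\sigma$ with $s_1(x\xrightarrow{g}z\to u)$ forces the underlying chain to be $(x\xrightarrow{g}z\xrightarrow{\tau_z}u\xrightarrow{k}z\to u)$ with $k\tau_z=\id_z$, whence $z\cong u$. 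Either way, some construction-and-comparison of this kind is the heart of the theorem, and it is absent from your proposal.
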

\begin{proof}
In either case, $g$ factors as $k\tau_x \colon x \to u \to z$.

Suppose $z$ is a local terminal object.
Then $k\colon k \to \id_z$ is an isomorphism in $\udec(\oc)$, so the morphism $(w\to u') = \varphi_1(k)$ in \eqref{eq construction of inverse} (from the proof of \cref{lem right invertible}) is also an isomorphism.
Now $[h]$ is a right inverse for $[g]$, and has its own right inverse by \cref{lem right invertible}, hence $[g]$ is invertible.

For the reverse direction, suppose that $[g]$ is invertible, and let $h \colon y \to w$ be as in the proof of \cref{lem right invertible}, so that $[h]$ is the inverse of $[g]$.
That is, $h = \varphi_1(\tau_x \colon g \to k)$.
Since $[h]$ is invertible, $h$ factors as $j\tau_y \colon y \to u' \to w$.
Likewise, we have $g = \varphi_1(\tau_y \colon h \to j)$.
Since $\varphi$ is a discrete opfibration, there is a unique element $\sigma' \in \oc_3 = \udec(\oc)_2$
\[
\begin{tikzcd}[column sep=small]
  x \rar{g} & z \rar{m} & a \rar{n} & z
\end{tikzcd}
\]
with $d_2 \sigma' = s_1g$ and 
\[
  \varphi_2(\sigma') = (\begin{tikzcd}[sep=small, cramped]
    y \rar{\tau_y} & u' \rar{j} & w
    \end{tikzcd}).
\]
This assertion uses that $\varphi_0(g) = y$, along with the fact that $\varphi_1 ( g \colon g \to \id_z)$ is a map  $y = \varphi_0(g) \to \varphi_0(\id_z)$, so must be $\tau_y$ by \eqref{OCA:idfiber}.

Now consider the corresponding element 
\[
\sigma = \left( 
\begin{tikzcd}[column sep=small, cramped]
  x \rar{g} & z \rar{m} & a \rar{n} & z \rar{\tau} & u
\end{tikzcd} \right)
\]
in $X_4 = \ldec(X)_3$.
In a moment, we will verify that this represents the chain
\begin{equation}\label{eq chain}
\begin{tikzcd}
y \rar[dashed, "{[g]}"] & x \rar[dashed, "{[h]}"]  & y \rar[dashed, "{[g]}"] & x,
\end{tikzcd} \end{equation}
in $\fc$, which implies that 
\[d_2^\bot \sigma = s_1^\bot [g] = s_2 (x \xrightarrow{g} z \to u) = (x \xrightarrow{g} z \to u \to u ).\]
On the other hand, $d_2^\bot \sigma = d_3 \sigma$ is computed directly as 
\[
  \begin{tikzcd}[column sep=small, cramped]
  x \rar{g} & z \rar{m} & a \rar & u
\end{tikzcd}
\]
which implies that $a = u$. 
Since $\id_z = nm = n\tau_z$, this implies that $z \cong u$.
This is precisely what we are trying to prove, that $z$ is a local terminal object.

Let us verify that $\sigma$ actually represents the chain \eqref{eq chain}, which will complete the proof.
We have $d_3^\bot \sigma = d_4^X\sigma$ is 
\[
   \varphi_3\left( 
\begin{tikzcd}[column sep=small, cramped]
  x \rar{g} & z \rar{m} & a \rar{n} & z \rar{\id_z} & z 
\end{tikzcd} \right)
= 
\left( 
\begin{tikzcd}[column sep=small, cramped]
  y \rar{\tau_y} & u' \rar{j} & w \rar{\tau} & u'  
\end{tikzcd} \right)
 \] 
which is the element corresponding to $\varphi_2(\sigma')$.
We then have $d_2^\bot d_3^\bot \sigma = d_3^X d_4^X \sigma$ is
\[
   \varphi_2\left( 
\begin{tikzcd}[column sep=small, cramped]
  y \rar{\tau_y} & u' \rar{j} & w \rar{\id_w} & w  
\end{tikzcd} \right)
= 
\left( 
\begin{tikzcd}[column sep=small, cramped]
  x \rar{g} & z \rar & u 
\end{tikzcd} \right) = [g]
 \] 
On the other hand, $d_0^\bot d_3^\bot \sigma = d_1 d_3^\bot \sigma = (y \xrightarrow{h} w \to u')$, which is just $[h]$.
Finally, $d_0^\bot d_0^\bot \sigma = d_1 d_1 \sigma$ is just $[g]$ since $nm = \id_z$. 
We conclude that $\sigma$ is really of the form \eqref{eq chain}, so $z$ is a local terminal object.
\end{proof}

With several assumptions on an operadic category, in \cite[\S2]{BataninMarkl:KDOC} the notion of `virtual morphism' is defined, which are precisely formal fiber inclusions associated to maps with codomain a local terminal object.
The \emph{unique fiber condition} \cite[Definition 2.14]{BataninMarkl:OCNEKD} says that if $t$ is a local terminal object and $x\to t$ has fiber $x$, then $t$ is a chosen local terminal object.

\begin{corollary}\label{cor virtual isos}
Suppose $\oc$ is a unary operadic category satisfying the blow-up axiom and the unique fiber condition.
Then the category of virtual isomorphisms is the subcategory of isomorphisms in $\fc$.
\end{corollary}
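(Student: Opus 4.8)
The plan is to show that the category of virtual isomorphisms and the groupoid of isomorphisms in $\fc$ coincide as subcategories of $\fc$, by matching them on the level of underlying maps. By definition, virtual morphisms are the formal fiber inclusions $[g]$ associated to maps $g\colon x\to z$ whose codomain $z$ is a \emph{local terminal object}; the virtual isomorphisms are presumably those among these that are invertible. So the content is to check two things: first, that every virtual isomorphism is an isomorphism in $\fc$, and second, that every isomorphism in $\fc$ is in fact a virtual morphism (i.e.\ has codomain a local terminal object), so that it lies in the category of virtual isomorphisms.

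First I would observe that \cref{thm fiber isos} already does the heavy lifting in one useful form: it says $[g]\colon y\dashrightarrow x$ is invertible in $\fc$ if and only if the codomain $z$ of $g\colon x\to z$ is a local terminal object. This immediately gives that the isomorphisms of $\fc$ are exactly the invertible virtual morphisms: if $[g]$ is an isomorphism in $\fc$ then $z$ is local terminal by \cref{thm fiber isos}, so $[g]$ is a virtual morphism, and being invertible it is a virtual isomorphism. Conversely a virtual isomorphism is by definition an invertible map in $\fc$. Thus as soon as the notion of `virtual isomorphism' is spelled out to mean an invertible virtual morphism, the two subcategories have literally the same morphisms.

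The role of the \emph{unique fiber condition} is to pin down the definitional framework so that this identification is meaningful: it guarantees that when $t$ is local terminal and $x\to t$ has fiber $x$, then $t$ is a \emph{chosen} local terminal, which is exactly the hypothesis under which Batanin--Markl's virtual morphisms are defined (the relevant construction in \cite[\S2]{BataninMarkl:KDOC} requires several such assumptions, and the unique fiber condition is what makes the virtual-morphism formalism apply here). So the step I expect to require the most care is not a calculation but the bookkeeping of definitions: verifying that the unique fiber condition ensures the Batanin--Markl category of virtual morphisms is defined and that its underlying maps are precisely the $[g]$ with codomain local terminal, matching the description preceding the corollary. Once that alignment is in place, \cref{thm fiber isos} supplies the equality of subcategories directly, and there is nothing further to compute.
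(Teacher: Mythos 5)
There is a genuine gap. Your argument correctly identifies the \emph{underlying sets} of morphisms: by \cref{thm fiber isos}, the invertible maps of $\fc$ are exactly the formal fiber inclusions $[g]$ whose associated map $g$ has local terminal codomain, i.e., exactly the virtual morphisms of \cite[\S2]{BataninMarkl:KDOC}. But the corollary asserts an identification of \emph{categories}, and the category of virtual isomorphisms is not a priori a subcategory of $\fc$: it is a separate construction of Batanin--Markl, equipped with its own composition law, whereas composition in $\fc$ is defined via lower d\'ecalage (\cref{prop fiber composition}). Your sentence ``the two subcategories have literally the same morphisms'' presupposes that the virtual morphisms, with Batanin--Markl's composition, already sit inside $\fc$ --- which is essentially the statement to be proved. (The same circularity appears in your phrase ``invertible virtual morphism'': invertible with respect to which composition?) Matching the two composition laws is the real mathematical content here, and your proposal never addresses it, dismissing what remains as ``bookkeeping of definitions.''

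The paper closes exactly this gap with a preorder argument, and that is also where the unique fiber condition genuinely enters --- not merely to make the definitions applicable, since the definitional hypotheses of \cite[\S2]{BataninMarkl:KDOC} already follow from the blow-up axiom via \cite[Lemma 2.11]{BataninMarkl:OCNEKD} (see \cref{remark weak strong blow-up}). Namely, combining \cref{thm fiber isos} with the unique fiber condition and \cite[Lemma 2.15]{BataninMarkl:OCNEKD}, the maximal subgroupoid of $\fc$ is a preorder; by \cite[Lemma 2.1]{BataninMarkl:KDOC} the category of virtual isomorphisms is a preorder for the same reasons. Two groupoids that are preorders, have the same objects, and have a morphism $y \dashrightarrow x$ under exactly the same condition (existence of $g \colon x \to t$ with fiber $y$ and $t$ local terminal) are necessarily the same category: with at most one morphism between any pair of objects, the composition laws are forced to agree. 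If you wanted to avoid the preorder route, you would instead have to compare \cref{prop fiber composition} directly against Batanin--Markl's composition of virtual morphisms; either way, some argument about composition is indispensable.
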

\begin{proof}
Virtual isomorphisms are defined in \cite[\S2]{BataninMarkl:KDOC} only under certain hypotheses, which follow from our assumptions on $\oc$ by \cite[Lemma 2.11]{BataninMarkl:OCNEKD} (see also \cref{remark weak strong blow-up}).
Combining \cref{thm fiber isos} with the unique fiber condition implies, by \cite[Lemma 2.15]{BataninMarkl:OCNEKD}, that the maximal subgroupoid of $\fc$ is a preorder.
The same is true (for the same reasons) of the category of virtual isomorphisms as explained in \cite[Lemma 2.1]{BataninMarkl:KDOC}, so these two groupoids coincide.
\end{proof}

\section{The expanded category}\label{sec expanded}

\begin{definition}
Suppose $\oc$ is a unary operadic category satisfying the blow-up axiom.
Then there is a category $\expandC$ with the same objects as $\oc$ and morphisms are pairs of the form
\[ \begin{tikzcd}[sep=small]
x \rar["\lr{f}", squiggly]  & y \rar{g} & z
\end{tikzcd} \]
where $f$ in $\oc$ is a morphism with domain $x$ and fiber $y$ and $g \colon y \to z$ is in $\oc$. 
Formally, the expanded category $\expandC$ is isomorphic to $\sd(U\oc)^\op$ and this representation of morphisms utilizes the canonical factorization system from \cref{prop canonical fact}.
\end{definition}

Indeed, if $X = U\oc$, then $(\iudecop(X), \ildec(X))$ is a strict factorization system on $\sd (X)$, so $(\ildec(X)^\op, \iudecop(X)^\op)$ is a strict factorization system on $\sd(X)^\op$.
Here, $\iudecop(X)^\op \cong (\udec(X)^\op)^\op \cong \oc$, while $\ildec(X)^\op \cong \ldec(X)^\op \cong \fc^\op$. 
Our representation for morphisms utilizes the strict factorization system $(\fc^\op, \oc)$ on $\expandC$.

We now explicitly spell out the distributive law governing the multiplication in terms of the operadic category structure on $\oc$ \cite{Beck:DL,RosebrughWood:DLF}.

\begin{proposition}\label{prop distrib law}
Suppose we have a composable pair of morphisms
\begin{equation*} \begin{tikzcd}[sep=small]
x \rar{f} & y \rar["\lr{g}", squiggly] & z
\end{tikzcd} \end{equation*}
in $\expandC$, with $f\in \oc$ and $\lr{g} \in \fc^\op$.
If $h = \varphi_1 (f \colon gf \to g)$, then the composite $\lr{g} \circ f$ is
\[
\begin{tikzcd}[sep=small]
x \rar["\lr{gf}", squiggly] & \varphi_0(gf) \rar["h"] & z.
\end{tikzcd}
\]
\end{proposition}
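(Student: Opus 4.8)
The plan is to compute everything inside $\sd(X)$ for $X \coloneqq U\oc$, which is a category by \cref{boors thm} (since $X$ is $2$-Segal by \cref{thm udec}), and then transport the answer to $\expandC = \sd(X)^\op$. Because $\sd(X)_1 = X_3$ and $\sd(X)_2 = X_5$, and a category has unique inner-horn fillers, the composite of two $1$-simplices is the remaining face of the unique $2$-simplex carrying them; so it suffices to exhibit suitable elements of $X_5$ and read off faces using $d_0^\sd = d_2 d_3$, $d_1^\sd = d_1 d_4$, and $d_2^\sd = d_0 d_5$. The two morphisms to be composed have representatives $s_\top(\overline f) \in X_3$ (for $f \in \oc$, lying in the right class $\iudecop(X)^\op$) and $s_\bot([g]) \in X_3$ (for $\lr g \in \fc^\op$, lying in the left class $\ildec(X)^\op$), where $\overline f \in \udec(X)_1 = X_2$ and $[g] \in \ldec(X)_1 = X_2$ are the evident representatives.

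First I would compute $\lr g \circ f$ as a single element of $X_3$. Writing $w$ for the codomain of $g$ in $\oc$, the filling $2$-simplex I would use is
\[ \Sigma = \bigl( x \xrightarrow{f} y \xrightarrow{\id} y \xrightarrow{g} w \xrightarrow{\tau} u \xrightarrow{\id} u \bigr) \in X_5. \]
One checks that $d_0^\sd \Sigma$ and $d_2^\sd \Sigma$ recover $s_\top(\overline f)$ and $s_\bot([g])$; here, although $d_2^\sd$ involves the top face $d_5^X$, that operator acts trivially on $\Sigma$ because its relevant vertex $x_4 = u$ is a chosen local terminal, so every fiber $\varphi_0(x_i \to u)$ is just $x_i$ by \eqref{OCA:uniquemap}. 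The remaining face is the composite, and a short inner-face calculation gives
\[ \lr g \circ f = d_1^\sd \Sigma = \bigl( x \xrightarrow{f} y \xrightarrow{g} w \to u \bigr) \in X_3, \]
which I abbreviate $(f,g)$.

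The real content is to rewrite $(f,g)$ in the factored form of the statement, i.e.\ to compute its canonical factorization for the strict factorization system of \cref{prop canonical fact}. I would apply the factorization map $s_0 s_2 \colon X_3 \to X_5$ to obtain
\[ \Sigma' = s_0 s_2(f,g) = \bigl( x \xrightarrow{\id} x \xrightarrow{f} y \xrightarrow{g} w \xrightarrow{\id} w \xrightarrow{\tau} u \bigr), \]
whose two nondegenerate edges $d_0^\sd \Sigma'$ and $d_2^\sd \Sigma'$ are the factors. The edge $d_0^\sd \Sigma' = d_2 d_3 \Sigma'$ uses only inner faces and evaluates to $(x \xrightarrow{\id} x \xrightarrow{gf} w \to u) = s_\bot([gf])$, the representative of $\lr{gf} \in \fc^\op$, whose intermediate object is $\varphi_0(gf)$.

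The main obstacle is the other edge $d_2^\sd \Sigma' = d_0 d_5 \Sigma'$, where the top face $d_5^X$ is genuinely the fiber functor, since now its relevant vertex is $w$ rather than a local terminal. Computing $d_5^X \Sigma'$ amounts to taking fibers of the slice chain over $w$; I expect its successive arrows to be $\varphi_1(\id_x) = \id$, then $\varphi_1(f \colon gf \to g) = h$, then $\varphi_1(g \colon g \to \id_w) = \tau_z$ (the last identification using that $\varphi_0(\id_w)$ is a chosen local terminal by \eqref{OCA:idfiber}). Discarding the first vertex via $d_0$ leaves $(\varphi_0(gf) \xrightarrow{h} z \to u \to u) = s_\top(\overline h)$, the representative of $h \in \oc$. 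Thus $(f,g)$ factors canonically as $x \xrightarrow{\lr{gf}} \varphi_0(gf) \xrightarrow{h} z$, which together with the computation $\lr g \circ f = (f,g)$ yields the claim. Throughout I would take care that the duality $\expandC = \sd(X)^\op$ correctly exchanges source with target and interchanges the roles of the two factor classes, as this bookkeeping is the easiest place to slip.
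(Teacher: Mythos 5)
Your proposal is correct and follows essentially the same route as the paper: your filling simplex $\Sigma$ is precisely $s_1s_3\sigma$ for $\sigma = (x \xrightarrow{f} y \xrightarrow{g} w \to u)$, and your $\Sigma'$ is $s_0s_2\sigma$, which are exactly the two simplices the paper uses (via \cref{second sfs} and \cref{prop canonical fact}) to identify $\sigma$ as the composite $\lr{g}\circ f$ and then refactor it. The only cosmetic difference is that you evaluate faces on explicit chains (computing $d_5^X\Sigma'$ directly from the fiber-diagram description of $d_\top^X$), whereas the paper first reduces to $d_3^X\sigma$ by simplicial identities; the content, including the use of \eqref{OCA:idfiber} and \eqref{OCA:uniquemap} to handle the modified top faces, is the same.
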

\begin{proof}
In $\fc$ we have $[g] \colon z \dashrightarrow y$, which implies the domain of $g$ is $y$.
We then have the following element $\sigma \in X_3 = \sd(X)_1$
\[ \begin{tikzcd}[sep=small]
x \rar{f} & y \rar{g} & w \rar & u.
\end{tikzcd} \]
We first claim that this morphism in $\sd(X)$ represents (the opposite of) $\lr{g} \circ f$ in $\expandC$.
To see this, note the factorization of $\sigma$ provided by \cref{second sfs} is given by $s_1s_3\sigma \in X_5$, whose constituent maps can be computed using the following identities
\[ 
\begin{gathered}
d_2^\sdrm s_1s_3  = d_0d_5 s_1 s_3 = d_0s_1d_4s_3 = s_0d_0  \\
d_0^\sdrm s_1s_3  = d_2d_3 s_1 s_3  = d_2 s_1 d_2 s_3 = s_2d_2
\end{gathered}
\]
in $X$. 
Thus the first map of $s_1s_3\sigma \in \sd(X)_2$ is $s_0d_0 \sigma = (y \to y \to w \to u)$ which corresponds to $[g] \colon z \dasharrow y$ under $\ildec(X) \cong \fc$, while the second map of $s_1s_3\sigma \in \sd(X)_2$ is $s_2d_2 \sigma = (x \to y \to u \to u)$ which corresponds to $f^\op \colon y \to x$ under the isomorphism $\iudecop(X) \cong \oc^\op$.

We now compute the factorization of $\sigma \in \sd(X)_1$ provided by \cref{prop canonical fact}.
This is given by $s_0s_2\sigma \in X_5 = \sd(X)_2$.
We have the following basic identities, emphasizing that the top face maps appearing in the first line are all $d_\top^X$.
\[ 
\begin{gathered}
d_2^\sdrm s_0s_2  = d_0d_5 s_0 s_2 = d_0s_0d_4s_2 =  s_2 d_3 \\
d_0^\sdrm s_0s_2  = d_2d_3 s_0 s_2  = d_2 s_0 d_2 s_2 = s_0 d_1
\end{gathered}
\]
Now $d_3^X \sigma$ is the top line in the following:
\[
\begin{tikzcd}[sep=small]
m \dar[no head, densely dotted] \rar["h"] & z \rar \dar[no head, densely dotted] & u' \dar[no head, densely dotted]\\
x \rar["f"] & y \rar["g"] & w \rar["\id_w"] & w,
\end{tikzcd} \]
so we see that the first map $s_2 d_3^X\sigma$ corresponds to $h^\op \colon z \to m$ under the isomorphism $\iudecop(X) \cong \oc^\op$.
The second morphism is $s_0d_1\sigma = (x\to x \to w \to u)$ which corresponds to $[gf] \colon m \dashrightarrow x$ under $\ildec(X) \cong \fc$.
\end{proof}

It is of course possible to establish the existence of the expanded category $\expandC$ without reference to 2-Segal sets.
One takes the statement of \cref{prop fiber composition} as the definition of composition in the category of fiber inclusions, shows that this forms a category (with $[\tau_x]$ the identities), and proves directly that the rule from \cref{prop distrib law} defines a distributive law between $\oc$ and $\fc^\op$.
Then $\expandC$ is formed by combining these categories \cite[2.6]{RosebrughWood:DLF}.
This was how we originally came across $\expandC$ (and also \cref{prop canonical fact}), when we were applying this strategy to not necessarily unary operadic categories.
The correct definition of the category of fiber inclusions is still unclear in the non-unary case, but we would like to explore this further.

\begin{remark}
Suppose $\oc$ is a unary operadic category satisfying the blow-up axiom and the unique fiber condition, so that isomorphisms in $\fc$ coincide coincide with virtual isomorphisms (\cref{cor virtual isos}).
Then the category of isomorphisms in $\expandC$ coincides with the category called $\oc_\textup{vrt}\int\oc_\textup{iso}$ in \cite{BataninMarkl:KDOC}.
This is simply because the category of isomorphisms in $\expandC$ are those formal composites $g \circ \lr{f}$ such that both $g$ and $\lr{f}$ are isomorphisms, by \cite[Proposition 2.13]{RosebrughWood:DLF}.
\end{remark}

\section{Hypermoment structure}\label{sec hypermoment}
Our initial motivation was to investigate the reverse direction of \cref{berger theorem}.
So far we have only discussed (strict) factorization systems, and have not addressed the issue of whether or not $\sd(U\oc)$ constitutes a hypermoment category.
A hypermoment category consists of an orthogonal factorization system and a cardinality functor to Segal's category $\Gamma$ (the opposite of finite pointed sets), satisfying several conditions.
We recall the following definition from \cite[\S3]{Berger:MCO}, though we note that in this section we only consider cardinality functors $\gamma$ which are constant at $\underline{1} \in \Gamma$.

\begin{definition}[Hypermoment category]\label{def hypermoment}
The data for a hypermoment category consists of a category $\ec$, an active/inert factorization system $(\ec_\actrm, \ec_\intrm)$ on $\ec$, and a cardinality functor $\gamma$ from $\ec$ to $\Gamma$ compatible with the factorization system.
Given such data, an object $x$ is a \emph{unit} if $\gamma(x)=\underline{1}$ and every active map with codomain $x$ has a unique inert section.
This data determines a \emph{hypermoment category} if for each $y \in \ec$ and each inert map $\underline{1} \rightarrowtail \gamma(y)$ in $\Gamma$, there is an essentially unique inert lift $x \rightarrowtail y$ where $x$ is a unit.
We have $\ec$ is \emph{unital} if every object $y$ admits an essentially unique active map $x' \ract y$ whose domain is a unit.
Finally, $\ec$ is \emph{rigid} if every isomorphism is an automorphism, and automorphisms act trivially (on the left) on active morphisms whose domain is a unit. 
\end{definition}

As observed in \cite[2.1]{Grandis:WSEMCC}, each strict factorization system determines a unique enveloping orthogonal factorization system (see also \cite[3.1]{RosebrughWood:DLF}).
When applied to the strict factorization system $(\oc^\op, \fc)$ on $\ec \coloneqq \expandC^\op \cong \sd(U\oc)$, we call morphisms in the left class \emph{active maps} and morphisms in the right class \emph{inert maps}, and we obtain an orthogonal factorization system $(\ec_\actrm, \ec_\intrm)$ on $\ec$.
The classes of active and inert maps are given by
\begin{align*}
\ec_\actrm &\coloneqq \{ [g] \circ f^\op \mid [g] \text{ is invertible} \} \\
\ec_\intrm &\coloneqq \{ [g] \circ f^\op \mid f^\op \text{ is invertible} \}
\end{align*}
and we write $x \ract y$ for active maps and $x \rightarrowtail y$ for inert maps.

\begin{remark}\label{remark ambidexterity}
The pair $(\ec_\intrm, \ec_\actrm)$ is also an orthogonal factorization system on $\ec$.
This is a general fact about \emph{ambidextrous} strict factorization systems, that is those strict factorization systems $(\mathcal{A}, \mathcal{B})$ such that $(\mathcal{B}, \mathcal{A})$ is also a strict factorization system (equivalently, the associated distributive law is invertible).
In more detail, suppose that $(\mathcal{L}, \mathcal{R})$ is the enveloping orthogonal factorization system for $(\mathcal{A}, \mathcal{B})$ and $(\mathcal{L}', \mathcal{R}')$ that for $(\mathcal{B}, \mathcal{A})$.
As observed in \cite[2.1]{Grandis:WSEMCC} for $(\mathcal{A},\mathcal{B})$, if we have $ab = b'a'$ with $b$ an isomorphism, then $b'$ is an isomorphism as well (here $a,a' \in \mathcal{A}$ and $b,b' \in \mathcal{B}$).
Ambidexterity gives the reverse implication, so $\mathcal{L} = \mathcal{R}'$.
Similarly we have $\mathcal{R} = \mathcal{L}'$, and we conclude $(\mathcal{L}',\mathcal{R}') = (\mathcal{R}, \mathcal{L})$.
\end{remark}

\begin{definition}\label{def unit}
We say that an object $x$ is a \emph{unit} of $\ec$ if every active map with codomain $x$ has precisely one inert section.
\[ \begin{tikzcd}[column sep=small]
x \ar[rr, tail, "\exists !"] \ar[dr,"="'] & & y \ar[dl,-act] \\
& x
\end{tikzcd} \]
\end{definition}

\begin{lemma}\label{lem units imply no maps out}
If $x$ is a unit of $\ec = \expandC^\op \cong \sd(U\oc)$ and $p \colon x \to y$ is any map in $\oc$, then $p$ is an isomorphism.
In particular, any unit is a local terminal object.
\end{lemma}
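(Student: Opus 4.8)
The plan is to present $p$ as an active map of $\ec$ with codomain $x$, apply the unit hypothesis to produce an inert section, and then read off from the distributive law of \cref{prop distrib law} that this section forces $p$ to be invertible. Throughout, write $X = U\oc$, so that $\ec = \sd(X)$ and $\expandC = \ec^\op$.

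First I would observe that $p \colon x \to y$, viewed as a (pure) morphism of the right class $\oc \subset \expandC$, determines a morphism $\bar p \colon y \to x$ of $\ec = \expandC^\op$ lying in the left class $\oc^\op \subseteq \ec_\actrm$; in particular $\bar p$ is active with codomain $x$. Since $x$ is a unit, \cref{def unit} supplies an inert section $s \colon x \to y$ with $\bar p \circ s = \id_x$. Passing back to $\expandC$, the section becomes a morphism $s^\op \colon y \to x$ with $s^\op \circ p = \id_x$. Because $s \in \ec_\intrm$, the explicit description of the inert class lets me write $s^\op = f \circ \lr{g}$, where $\lr{g} \in \fc^\op$ and $f \in \oc$ is an isomorphism.

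Next I would expand $s^\op \circ p = f \circ \lr{g} \circ p$ using \cref{prop distrib law}. Setting $h = \varphi_1(p \colon gp \to g)$, that proposition gives $\lr{g} \circ p = h \circ \lr{gp}$, so $s^\op \circ p = (fh) \circ \lr{gp}$ is already split into its $\fc^\op$-part $\lr{gp}$ and its $\oc$-part $fh$. Strictness of the factorization system $(\fc^\op, \oc)$, compared against the factorization $\id_x = \id_x \circ \id_x$, then forces $\lr{gp} = \id_x$ and $fh = \id_x$. The first identity, by \cref{prop idens in fc}, unwinds to $gp = \tau_x$, so $g$ has codomain a chosen local terminal object and hence $g = \tau_y$; then $h = \varphi_1(p \colon \tau_x \to \tau_y)$, which \eqref{OCA:uniquemap} identifies with $p$ itself. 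Therefore $fp = \id_x$ with $f$ invertible, so $p = f^{-1}$ is an isomorphism. The final assertion is the special case $p = \tau_x$: then $\tau_x$ is invertible, so $x$ is isomorphic to its chosen local terminal object and is therefore a local terminal object.

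I expect the only real difficulty to be bookkeeping: keeping $p$ straight through the identifications $\oc \hookrightarrow \expandC$, $\expandC = \ec^\op$, and $\ec = \sd(X)$, and—crucially—remembering that the inert section lives in the orthogonal class $\ec_\intrm$ rather than the strict class $\fc$, so that it contributes the invertible $\oc$-factor $f$ that ultimately inverts $p$. Once the section is written as $f \circ \lr{g}$, the remainder is a direct application of \cref{prop distrib law} together with the strictness of the factorization and axiom \eqref{OCA:uniquemap}.
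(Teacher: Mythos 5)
Your proof is correct, and its skeleton is the same as the paper's: regard $p^\op$ as an active map with codomain $x$, use the unit hypothesis to produce an inert section whose $\oc$-component $f$ is invertible, compose with $p$, and let strictness of the factorization force both constituents of the resulting composite to be identities, whence $fp=\id_x$ and $p=f^{-1}$. Where you genuinely diverge is in how the $\oc$-component of that composite gets identified with $p$. The paper factors $p^\op\circ[g]$ abstractly as $[k]\circ h^\op$, gets $[k]=\id$ and $fh=\id_x$ from strictness, and then invokes the \emph{other} strict factorization system $(\fc,\oc^\op)$ of \cref{second sfs} (ambidexterity) to read the resulting equality $p^\op\circ[g]=h^\op$ as two $(\fc,\oc^\op)$-factorizations of one map, concluding $[g]=\id$ and $h=p$. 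You never use \cref{second sfs}: working entirely inside the single system $(\fc^\op,\oc)$ on $\expandC$, you compute the composite by the explicit distributive law of \cref{prop distrib law}, so the $\oc$-component is $h=\varphi_1(p\colon gp\to g)$, and once strictness gives $\lr{gp}=\id_x$ and $fh=\id_x$ you finish with operadic-category input: $gp=\tau_x$ by \cref{prop idens in fc}, hence $g=\tau_y$, hence $h=p$ by \eqref{OCA:uniquemap}. (The step from $\lr{gp}=\id_x=\lr{\tau_x}$ to $gp=\tau_x$ silently uses that morphisms of $\fc$ \emph{are} morphisms of $\oc$, which is immediate from the definition of $\fc$ but deserves a word.) The trade-off: the paper's route is purely formal and, as remarked after \cref{lem units char}, applies verbatim to any category equipped with an ambidextrous strict factorization system, while yours is tied to the operadic setting but is more explicit, pinning the inert section down concretely as $f\circ\lr{\tau_y}$ and substituting the operadic axioms for the appeal to ambidexterity.
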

\begin{proof}
The map $p^\op \colon y \ract x$ is an active map with codomain $x$, hence it has a unique inert section $x \rightarrowtail y$.
We first factor this inert map into a map $f^\op$ in $\oc^\op$ followed by a map $[g]$ in $\fc$, as in the triangle below left, using the strict factorization system $(\oc^\op, \fc)$ from \cref{prop canonical fact}.
\[ \begin{tikzcd}[column sep=small]
x \ar[rr,tail] \ar[dr,"f^\op"',-act]  && u \ar[rr,-act,"p^\op"] && x\\
& y \ar[ur,dashed,"{[g]}"'] \ar[rr,-act,"h^\op"'] & & x \ar[ur,dashed,"{[k]}"']
\end{tikzcd} \]
Notice that $f^\op$ is an isomorphism in $\oc^\op$ since this part of the definition of the class of inert maps.
We then factor $p^\op \circ [g]$ as $[k] \circ h^\op$.
Since the identity on $x$ is $[k] \circ (fh)^\op$ and we are working in a strict factorization system, we have $[k]$ and $(fh)^\op$ are identities. 
Thus $p^\op \circ [g] = [k] \circ h^\op = h^\op$, so using the other strict factorization system $(\fc, \oc^\op)$ from \cref{second sfs}, we conclude that $h = p$ and that $[g]$ is an identity.
We then have $fp = fh = \id_x$.
Since $f$ is invertible, so is $p$.

The second statement follows by taking $p=\tau_x \colon x\to u$.
\end{proof}

\begin{lemma}\label{lem units char}
An object $x$ is a unit of $\ec$ if and only if every map in $\oc$ with domain $x$ is an isomorphism.
\end{lemma}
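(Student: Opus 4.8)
The forward implication is precisely the first assertion of \cref{lem units imply no maps out}, so the plan is only to treat the converse. Assume that every morphism of $\oc$ with domain $x$ is an isomorphism. Applying this to $\tau_x \colon x \to u$ shows immediately that $x$ is a local terminal object. I would then prove the stronger statement that \emph{every} morphism of $\ec \cong \sd(U\oc)$ with codomain $x$ is invertible. Granting this, each active map $\alpha \colon y \ract x$ is an isomorphism, so its unique section is $\alpha^{-1}$, and that section is inert because isomorphisms lie in both classes of the orthogonal factorization system $(\ec_\actrm,\ec_\intrm)$. Hence $x$ is a unit.

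To establish the stronger statement I would use the explicit description of the edgewise subdivision. Writing $X = U\oc$, a morphism of $\ec = \sd(X)$ is a $3$-simplex of $X$, i.e.\ a chain $\sigma = (x_0 \xrightarrow{a_1} x_1 \xrightarrow{a_2} x_2 \to u)$ in $\oc$ ending at a chosen local terminal object. Unwinding the face map $d_0^{\sd} = d_1 d_2$ shows that the codomain of $\sigma$ in $\ec$ is $x_0$. Thus a morphism with codomain $x$ is exactly such a chain with $x_0 = x$, and in it both the first leg $a_1 \colon x \to x_1$ and the total composite $a_2 a_1 \colon x \to x_2$ are morphisms of $\oc$ with domain $x$. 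By hypothesis they are isomorphisms, whence $a_2 = (a_2 a_1)\, a_1^{-1}$ is an isomorphism as well.

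I would then feed this into the canonical factorization of \cref{prop canonical fact}. Computing the factorization $\sigma = [a_2 a_1] \circ p_1^{\op}$, where $p_1 = \varphi_1(a_1 \colon a_2 a_1 \to a_2)$ is the induced map of fibers and $p_1^\op$ is the $\iudecop(X)\cong\oc^\op$ part while $[a_2 a_1]$ is the $\ildec(X)\cong\fc$ part, I would observe that $p_1$ is an isomorphism because $a_1$ is and because $\varphi$ is a functor, and that $[a_2 a_1]$ is invertible by \cref{thm fiber isos} since $x_2 \cong x \cong u$ is a local terminal object. Therefore $\sigma$ is a composite of isomorphisms in $\ec$, hence itself an isomorphism, which is what was needed.

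The main obstacle is bookkeeping rather than conceptual: one must correctly identify the source and target of a morphism of $\sd(X)$ and pin down its image under the strict factorization system, in order to see that the two $\oc$-morphisms controlling it — the first leg $a_1$ and the total composite $a_2 a_1$ — both issue from $x$, so that the hypothesis applies to each of them (and hence also to $a_2$). Once this is in place, the fact that fiber functors preserve isomorphisms together with the classification of invertible fiber inclusions in \cref{thm fiber isos} finishes the argument at once.
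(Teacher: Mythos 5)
Your proof is correct, and the forward direction is handled identically (by citing \cref{lem units imply no maps out}), but your converse takes a genuinely different route. The paper factors an active map $w \ract x$ through the \emph{reversed} strict factorization system $(\fc,\oc^\op)$ of \cref{second sfs}, as $f^\op\circ[g]$: the hypothesis applies directly to $f$ (which has domain $x$), and the invertibility of $[g]$ is then purely formal, via the ambidexterity observation of \cref{remark ambidexterity} (comparing the two $(\ec_\intrm,\ec_\actrm)$-factorizations of the active map). You instead factor through the canonical system $(\oc^\op,\fc)$ of \cref{prop canonical fact}, writing a morphism with codomain $x$ as a chain $(x \xrightarrow{a_1} x_1 \xrightarrow{a_2} x_2 \to u)$ and its factorization as $[a_2a_1]\circ p_1^\op$ --- exactly the computation appearing in the proof of \cref{prop distrib law}, and your bookkeeping here is accurate --- and you then verify invertibility of each factor by hand: the $\oc^\op$-part because the fiber functor $\varphi$ preserves the isomorphism $a_1$, and the $\fc$-part by the classification of isomorphisms in $\fc$ (\cref{thm fiber isos}), using that $x_2$ is local terminal since $a_2a_1$ and $\tau_x$ are invertible. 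Each approach buys something: yours is fully explicit and establishes the stronger statement that \emph{every} morphism of $\ec$ with codomain $x$ (not just every active one) is invertible; the paper's is shorter and uses nothing specific to operadic categories beyond the hypothesis itself, which is what licenses the remark immediately following the lemma that the argument applies to an arbitrary ambidextrous strict factorization system --- a generalization that your proof, leaning as it does on \cref{thm fiber isos}, would not support.
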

\begin{proof}
The forward direction was proved in \cref{lem units imply no maps out}.
Suppose that every morphism in $\oc$ with domain $x$ is an isomorphism, and let $w \ract x$ be an active map of $\ec$.
We factor this map as 
\[ \begin{tikzcd}
w \rar[tail, dashed, "{[g]}"] & z  \rar[-act, "f^\op"] & x
\end{tikzcd} \]
using the strict factorization system $(\fc, \oc^\op)$ from \cref{second sfs}.
By assumption, $f\colon x \to z$ is an isomorphism.
Since $(\ec_\intrm,\ec_\actrm)$ is an orthogonal factorization system by \cref{remark ambidexterity}, we also know $[g]$ is an isomorphism.
Hence $w\ract x$ is an isomorphism, which implies it has a unique inert section. 
We conclude that $x$ is a unit.
\end{proof}

\begin{remark}
The proofs of \cref{lem units imply no maps out} (excluding the statement about local terminals) and \cref{lem units char} are general and apply to any category $\ec$ endowed with an ambidextrous strict factorization system $(\mathcal{A}, \mathcal{B})$.
That is, if $(\mathcal{L}, \mathcal{R})$ is the enveloping orthogonal factorization system, then the units of $(\ec, (\mathcal{L}, \mathcal{R}))$ are precisely those $x$ such that every map of $\mathcal{A}$ with codomain $x$ is an isomorphism.
\end{remark}

As we are trying to build rigid hypermoment categories (see \cref{berger theorem}), the following lemmas are relevant.

\begin{lemma}\label{lem isos are idens}
Let $\oc$ be a unary operadic category satisfying the blow-up axiom.
If every isomorphism in $\oc$ is an automorphism, then every isomorphism in $\oc$ is an identity.
\end{lemma}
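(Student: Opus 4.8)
The plan is to reduce the claim to the second bullet of \cref{isos and loc term fibers}, which says that a morphism is an identity exactly when its fiber is a chosen local terminal object. So let $f$ be any isomorphism; by hypothesis it is an automorphism $f \colon x \to x$, and it suffices to show that its fiber $\varphi_0(f)$ is a chosen local terminal object, for then $f = \id_x$.

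The key step is to compare $f$ with $\id_x$ inside the slice $\oc_{/x} \subset \udec(\oc)$ and then apply the fiber functor. I would first note that $\id_x$ and $f$ are isomorphic as objects of $\oc_{/x}$: the map $f^{-1}$ satisfies $f \circ f^{-1} = \id_x$ and hence defines a morphism $\alpha \colon \id_x \to f$ over $x$, whose inverse is the morphism $f \to \id_x$ given by $f$ itself (since $\id_x \circ f = f$). Because $\varphi$ is a functor it preserves this isomorphism, yielding an isomorphism $\varphi_1(\alpha) \colon \varphi_0(\id_x) \to \varphi_0(f)$ in $\oc$. By \eqref{OCA:idfiber} the source $\varphi_0(\id_x)$ is a chosen local terminal object $u$, while the target is the fiber $y = \varphi_0(f)$ of $f$.

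At this point the hypothesis does the real work: since $\varphi_1(\alpha) \colon u \to y$ is an isomorphism in $\oc$, it must be an automorphism, which forces $u = y$. Thus the fiber of $f$ equals the chosen local terminal object $\varphi_0(\id_x)$, and \cref{isos and loc term fibers} then gives $f = \id_x$. The only genuine content is the slice-category comparison producing the isomorphism between the two fibers; once that is in place, the hypothesis and the cited proposition close the argument with no further obstacle, so I expect the first step to be the crux rather than a difficulty.
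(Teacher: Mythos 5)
Your proof is correct and takes essentially the same route as the paper: the paper also compares the automorphism with the identity inside the slice over its (co)domain, applies the fiber functor $\varphi$ to obtain an isomorphism between $\varphi_0(f)$ and the chosen local terminal object $\varphi_0(\id_x)$ from \eqref{OCA:idfiber}, and then invokes \cref{isos and loc term fibers} to conclude $f$ is an identity. The only cosmetic difference is the direction of the slice morphism used (the paper takes $f \colon f \to \id_x$, you take $f^{-1} \colon \id_x \to f$), which is immaterial.
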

\begin{proof}
Suppose that $g\colon c \to c$ is an automorphism of $\oc$.
Then the fiber of $g$ is a local terminal object by \cref{isos and loc term fibers}. 
Alternatively, we have $\varphi_1(g) = (w\to u)$ below is an isomorphism, hence an identity.
\[ \begin{tikzcd}[row sep=small, column sep=tiny]
w \ar[rr] \dar[no head, densely dotted] & & u \dar[no head, densely dotted]\\
c \ar[rr,"g"] \ar[dr,"g"'] & & c \ar[dl,"\id_c"]  \\
& c
\end{tikzcd} \]
Since the fiber of $g$ is $u$, \cref{isos and loc term fibers} implies that $g$ is an identity.
\end{proof}

\begin{lemma}\label{lem isos idens fc}
Let $\oc$ be a unary operadic category satisfying the blow-up axiom.
If every isomorphism in $\oc$ is an automorphism, then every isomorphism in the category of fiber inclusions $\fc$ is an identity.
\end{lemma}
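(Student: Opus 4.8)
The plan is to leverage the classification of isomorphisms in $\fc$ already established in \cref{thm fiber isos}, which says that $[g]\colon y \dashrightarrow x$ (associated to $g\colon x \to z$ with fiber $y$) is invertible precisely when $z$ is a local terminal object. Combining the hypothesis that every isomorphism in $\oc$ is an automorphism with \cref{lem isos are idens}, we know that every isomorphism in $\oc$ is in fact an \emph{identity}. The strategy is to show that under these conditions, an invertible $[g]$ in $\fc$ must actually be an identity morphism, using \cref{prop idens in fc}, which characterizes identities in $\fc$ as those $[\tau_x]$ coming from maps to chosen local terminal objects.

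First I would take an isomorphism $[g]\colon y \dashrightarrow x$ in $\fc$, where $g\colon x\to z$ has fiber $y$. By \cref{thm fiber isos}, $z$ is a local terminal object. The key observation is that a local terminal object $z$ need not a priori be a \emph{chosen} local terminal object, but the unique map $\tau_z\colon z \to u$ to the chosen local terminal in its component is an isomorphism (being a map between local terminals in the same connected component). By \cref{lem isos are idens} this isomorphism $\tau_z$ is an identity, so $z = u$ is itself the chosen local terminal object. Thus $g\colon x \to u$ is a map to a chosen local terminal object, meaning $g = \tau_x$. Then \cref{prop idens in fc} identifies $[g] = [\tau_x]$ as the identity $\id^\fc_x$.

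The main point to get right is the reduction from ``$z$ is a local terminal object'' (the output of \cref{thm fiber isos}) to ``$z$ is the \emph{chosen} local terminal object.'' This is exactly where the hypothesis feeds in through \cref{lem isos are idens}: without knowing isomorphisms are identities, one could only conclude $z \cong u$, which classifies $[g]$ as an isomorphism but not as an identity. I would phrase this carefully, noting that $g$ being a map into a local terminal object gives that $g$ factors as $\tau_z^{-1}\circ \tau_x$, and once $\tau_z$ is an identity we get $g = \tau_x$ outright. I expect no serious obstacle here, as all the hard work is already contained in \cref{thm fiber isos} and \cref{lem isos are idens}; this lemma is essentially a short synthesis of those two results together with the identification of identities in $\fc$ from \cref{prop idens in fc}.
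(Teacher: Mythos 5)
Your proposal is correct and follows essentially the same route as the paper: invoke \cref{thm fiber isos} to conclude the codomain $z$ of $g$ is local terminal, use \cref{lem isos are idens} to upgrade ``local terminal'' to ``chosen local terminal'' (so $g = \tau_x$), and conclude via \cref{prop idens in fc}. The only difference is that you spell out the intermediate step (that $\tau_z$ is an isomorphism between local terminals, hence an identity), which the paper leaves implicit.
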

\begin{proof}
Suppose $[g] \colon y \dashrightarrow x$ is invertible.
By \cref{thm fiber isos}, the codomain of $g \colon x \to z$ is a local terminal object.
The previous lemma implies that $z$ is a chosen local terminal object, so $g = \tau_x$.
\Cref{prop idens in fc} implies that $[g]=[\tau_x]$ is an identity.
\end{proof}

In this special situation, we have $(\ec_\actrm, \ec_\intrm) = (\oc^\op, \fc)$ and every isomorphism of $\ec$ is an identity.

\begin{proposition}\label{prop reverse berger}
Suppose $\oc$ is a unary operadic category satisfying the blow-up axiom, such that every isomorphism is an automorphism.
If the only morphisms in $\oc$ with domain a (chosen) local terminal object are identities, then $\ec = \expandC^\op$ is a rigid unital hypermoment category.
Further, applying \cref{berger theorem} to $\ec$ recovers $\oc$.
\end{proposition}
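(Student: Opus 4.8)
The plan is to name the hypermoment data explicitly, read off the units from the classification of isomorphisms already in hand, verify the three defining conditions directly, and then trace Berger's recipe to recover $\oc$. Throughout I use the observation recorded just before the statement: in this special situation $(\ec_\actrm, \ec_\intrm) = (\oc^\op, \fc)$ and every isomorphism of $\ec$ is an identity. I equip $\ec$ with this orthogonal factorization system and with the cardinality functor $\gamma \colon \ec \to \Gamma$ constant at $\underline{1}$; compatibility with the factorization system is automatic, since $\gamma$ sends every morphism to $\id_{\underline{1}}$, which is at once active and inert in $\Gamma$. The first substantive point is to identify the units. By \cref{lem units char} an object $x$ is a unit exactly when every morphism of $\oc$ out of $x$ is an isomorphism; combining this with \cref{lem isos are idens} and the hypothesis that the only maps out of a chosen local terminal object are identities, the units of $\ec$ are precisely the chosen local terminal objects of $\oc$. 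Indeed, each chosen local terminal has only identity maps out of it, hence is a unit, and conversely a unit $x$ has $\tau_x$ invertible, so $\tau_x = \id_x$ and $x = u$.

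Next I would check the three conditions. Rigidity is immediate: every isomorphism of $\ec$ is an identity, hence an automorphism, and the identity automorphisms act trivially. For the lifting condition of \cref{def hypermoment}, fix $y \in \ec$; the unique inert map $\underline{1} \rightarrowtail \gamma(y) = \underline{1}$ of $\Gamma$ is the identity, and an inert lift with unit domain is a morphism $[g] \colon x \dashrightarrow y$ of $\fc$ whose domain $x = \varphi_0(g)$ is a chosen local terminal. By \cref{isos and loc term fibers} the fiber of $g \colon y \to z$ is a chosen local terminal precisely when $g$ is an identity, so $g = \id_y$ and the lift is $[\id_y]$; as every isomorphism of $\ec$ is an identity, this lift is unique, hence a fortiori essentially unique. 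For unitality, an active map into $y$ with unit domain is a map $f^\op$ with $f \colon y \to u$ in $\oc$ landing in a chosen local terminal, so $f = \tau_y$; thus $\tau_y^\op \colon u \ract y$ is the essentially unique such active map. Hence $\ec$ is a rigid unital hypermoment category.

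Finally I would identify the output of \cref{berger theorem}. The active part of $\ec$ is $\ec_\actrm = \oc^\op$, whose dual is $\oc$ as a bare category; the recovered cardinality functor is constant at the one-point set, matching the unary structure, and the chosen local terminal objects of the recovered operadic category are its units, which we have identified with those of $\oc$. What remains is to match fiber functors. Berger's construction extracts the fiber of an active map as the domain of an inert map produced, via the unit sections, from the active/inert factorization; since $\fc$ is by definition the encoding of the fiber functor, sending each $g \colon y \to z$ to the inert map $[g] \colon \varphi_0(g) \dashrightarrow y$, and since the distributive law of \cref{prop distrib law} computes exactly the factorizations that Berger's recipe invokes, this reading reproduces $\varphi$. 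Carrying out this last comparison---unwinding Berger's definition of the fiber functor in terms of the factorization system $(\oc^\op, \fc)$ and matching it against the $\fc$-encoding of $\varphi$---is the step I expect to be the main obstacle; everything else follows formally from the lemmas of \cref{sec fiber isos,sec hypermoment}.
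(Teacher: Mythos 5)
Your proof is correct and takes essentially the same route as the paper: the constant cardinality functor, the identification of units with the (chosen) local terminal objects via \cref{lem units char} and \cref{lem isos are idens}, the unique inert lift $[\id_y]$ of the point via \cref{isos and loc term fibers}, unitality witnessed by $\tau_y^\op$, and rigidity from every isomorphism of $\ec$ being an identity. The fiber-functor comparison you flag as the remaining obstacle is not carried out in the paper either; its proof simply concludes by observing that the operadic category produced by \cref{berger theorem} is $\ec_\actrm^\op = \oc$, the inert maps being exactly $\fc$.
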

\begin{proof}
As mentioned at the start of this section, we take the cardinality functor $\ec \to \Gamma$ to be identically $\underline{1}$.
Thus the units from \cref{def hypermoment} and \cref{def unit} coincide.
By our assumption and \cref{lem units char}, the units are precisely the local terminal objects.
Since there is only one inert endomorphism of $\underline{1}$ in $\Gamma$, to prove that $\ec$ is a hypermoment category we must show that for each $x\in \ec$, there is an essentially unique inert map $w \rightarrowtail x$ with $w$ a unit.
\Cref{lem isos are idens} implies that the category of inert maps $\ec_\intrm$ is equal to the category of fiber inclusions $\fc$, and that every local terminal is a chosen local terminal.
By \cref{isos and loc term fibers}, $[\id_x]$ is the unique map of $\fc$ whose domain is a unit and whose codomain is $x$.
Thus $\ec$ is a hypermoment category.

The unitality axiom for a hypermoment category says that each object $x$ admits an essentially unique active map from a unit.
Since $\ec_\actrm = \oc^\op$ by \cref{lem isos idens fc} and units are the chosen local terminals, it is clear there is a unique map $\tau_x^\op \colon u \ract x$.
Rigidity is automatic since every isomorphism in $\ec$ is an identity.
The operadic category associated to $\ec$ is $\ec_\actrm^\op = \oc$.
\end{proof}

\ifhs
\begin{appendices}
\else
\appendix
\fi

\section{Upper d\'ecalage of an upper 2-Segal set}\label{app upper dec}

Given an upper 2-Segal set $Z$, the simplicial set $\oc = \udec(Z)$ is a category by \cref{lem upper 2-segal char}.
It comes equipped with a canonical fiber map \[ 
\begin{gathered}  
\varphi \colon \udec (\oc) = \udec\udec (Z) \to \udec (Z) = \oc \\
Z_{n+2} \ni \sigma \xmapsto{\varphi_n} d_{n+2}\sigma \in Z_{n+1}. 
\end{gathered}
\]
Meanwhile, $Z_0 \xrightarrow{s_0} Z_1 = \oc_0$ endows $\oc$ with chosen local terminal objects. 
Together, these give $\oc$ the structure of an operadic category.

\begin{lemma}
Suppose $Z$ is upper 2-Segal and $\oc = \udec(Z)$. 
The elements of $s_0(Z_0) \subset Z_1 = \oc_0$ are a set of local terminal objects of $\oc$.
More precisely, each object of $\oc$ admits exactly one morphism to an object of $s_0(Z_0)$.
\end{lemma}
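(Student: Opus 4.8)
The plan is to read the category $\oc=\udec(Z)$ off directly: its objects are the edges $Z_1$, its morphisms are the $2$-simplices $Z_2$ with domain $d_1$ and codomain $d_0$, and identities are given by $s_0$. The objects in question are the degenerate edges $s_0(Z_0)$. For existence I would simply exhibit, for each $x\in Z_1$, the morphism $s_1x\colon x\to s_0(d_0x)$: since $d_1 s_1 x = x$ and $d_0 s_1 x = s_0 d_0 x$, the simplex $s_1x$ has domain $x$ and codomain the degenerate edge at the $Z$-target of $x$.

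Next I would identify which object of $s_0(Z_0)$ can receive a map from a given $x$, and simultaneously reduce local terminality to a single component. The key observation is that $x\mapsto d_0x$ (the ``target vertex in $Z$'') is constant along morphisms of $\oc$: for $\sigma\colon x\to y$ one has $d_0x=d_0d_1\sigma=d_0d_0\sigma=d_0y$ by the identity $d_0d_1=d_0d_0$. Hence a morphism $x\to s_0v$ can exist only when $v=d_0x$, while every $x$ with $d_0x=v$ is joined to $s_0v$ by $s_1x$; thus the connected component of $s_0v$ is exactly $\{x\mid d_0x=v\}$. It remains to show that $s_0v$ is terminal in this component, i.e.\ that the $s_1x$ above is the \emph{only} morphism $x\to s_0v$.

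The main obstacle is precisely this uniqueness, because the upper $2$-Segal condition imposes no relation among the three faces of a single $2$-simplex (the defining pullback squares start at $n=2$, with no axiom governing a lone $\sigma\in Z_2$); one must climb to $Z_3$. The mechanism I would use is the $n=2$ instance of the defining square, which makes $\gamma\mapsto(d_2\gamma,d_0\gamma)$ a bijection $Z_3\cong Z_2\times_{Z_1}Z_2$. Given any $\sigma\colon x\to s_0v$, the two lifts $s_1\sigma$ and $s_2\sigma$ have the \emph{same} image $(\sigma,\,s_0s_0v)$ under this bijection: the simplicial identities give $d_2s_1\sigma=d_2s_2\sigma=\sigma$, and $d_0s_1\sigma=s_0d_0\sigma=s_0s_0v=s_1d_0\sigma=d_0s_2\sigma$. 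Injectivity of the bijection forces $s_1\sigma=s_2\sigma$ in $Z_3$. Applying $d_3$ (with $d_3s_1=s_1d_2$ and $d_3s_2=\id$) yields $s_1d_2\sigma=\sigma$, and then applying $d_1$ (with $d_1s_1=\id$) gives $d_2\sigma=d_1\sigma=x$; hence $\sigma=s_1x$. The crux is recognizing that the hypothesis $d_0\sigma\in s_0(Z_0)$ is exactly what makes the two degenerate lifts $s_1\sigma$ and $s_2\sigma$ collide under the Segal bijection, and that collision is where upper $2$-Segality does all the work.
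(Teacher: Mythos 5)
Your proof is correct and is essentially the paper's own argument: both establish uniqueness by showing that the two degenerate lifts $s_1\sigma$ and $s_2\sigma$ have the same image under the bijection $(d_2,d_0)\colon Z_3 \to Z_2\times_{Z_1}Z_2$ (your $n=2$ upper 2-Segal pullback square is literally the same square the paper invokes by saying ``$\oc$ is a category''), and then recover $\sigma = s_1 d_1\sigma$ by applying face operators. The remaining differences are cosmetic: the paper applies $d_1$ directly to $s_1\sigma = s_2\sigma$ instead of $d_3$ followed by $d_1$, and it leaves the existence statement and the target-vertex/component bookkeeping implicit where you spell them out.
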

\begin{proof}
In an upper 2-Segal set, if $\sigma \in Z_2$ is such that $d_0\sigma = s_0d_0d_1\sigma$ then $\sigma = s_1 d_1 \sigma$.
Indeed, if the first equality holds, then $s_1 \sigma = s_2 \sigma \in Z_3 = \oc_2$ since $\oc$ is a category; we simply must show that $d_0$ and $d_2$ of these elements agree. 
Of course $d_2$ of both of these is just $\sigma$, and for $d_0$ we have
\[ d_0 s_1 \sigma = s_0 d_0 \sigma = s_0 s_0 d_0d_1 \sigma = s_1 s_0 d_0 d_1 \sigma = s_1 d_0 \sigma = d_0 s_2 \sigma. \]
Using $s_1 \sigma = s_2\sigma$, we compute $\sigma = d_1 s_1 \sigma = d_1 s_2 \sigma = s_1 d_1 \sigma$, as desired.

Now suppose that $f\in Z_1 = \oc_0$ is an arbitrary object of $\oc$.
If $\sigma \in Z_2$ is any element with $d_1 \sigma = f$ and $d_0 \sigma = s_0z \in s_0(Z_0)$, then 
\[
  d_0 f = d_0 d_1 \sigma = d_0 d_0 \sigma = d_0 s_0 z = z.
\]
Thus $d_0 \sigma = s_0d_0f = s_0d_0d_1\sigma$, so we conclude $\sigma = s_1d_1 \sigma = s_1f$.
We have shown there is precisely one morphism $\sigma \in \oc_1 = Z_2$ with domain $f$ and codomain in $s_0(Z_0)$.
\end{proof}

\begin{proposition}
If $Z$ is an upper 2-Segal set, then $\oc = \udec(Z)$ is a unary operadic category.
\end{proposition}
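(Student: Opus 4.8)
The plan is to verify the three axioms \eqref{OCA:idfiber}, \eqref{OCA:uniquemap}, and \eqref{OCA:fibercomp} for the data assembled just above, namely the category $\oc = \udec(Z)$ together with the chosen local terminal objects $s_0(Z_0) \subset Z_1 = \oc_0$ and the fiber functor $\varphi$ defined by $\varphi_n(\sigma) = d_{n+2}\sigma$ on $Z_{n+2} = \udec(\oc)_n$. The preceding lemma already establishes that each object of $\oc$ admits exactly one morphism to an object of $s_0(Z_0)$, so $\oc$ is genuinely endowed with chosen local terminal objects; this identifies the map $\tau_f$ for an object $f \in Z_1 = \oc_0$ as $s_1 f \in Z_2 = \oc_1$. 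The entire proof then reduces to pushing each axiom through the dictionary $\oc_n = Z_{n+1}$ and $\varphi_n = d_{n+2}$, translating each operadic condition into a simplicial identity in $Z$.

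First I would check \eqref{OCA:idfiber}: for an object $f \in Z_1 = \oc_0$, the identity $\id_f$ is $s_0 f \in Z_2 = \oc_1 = \udec(\oc)_0$, and $\varphi_0(\id_f) = d_2 s_0 f = s_0 d_1 f$, which lies in $s_0(Z_0)$ and is therefore a chosen local terminal object. Next, for \eqref{OCA:uniquemap}, I would use the identification $\tau_f = s_1 f$ from the lemma: then $\varphi_0(\tau_f) = d_2 s_1 f = f$, giving $\varphi_0(\tau_f) = f$ as required, and the statement about $\varphi_1$ on a map $\tau_x \to \tau_y$ becomes an analogous simplicial computation in $Z_3 = \oc_2$. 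These two axioms are essentially bookkeeping and should present no difficulty.

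The main obstacle will be axiom \eqref{OCA:fibercomp}, the compatibility of iterated fibers with composition, since this is the genuinely content-bearing condition and the only one where the upper 2-Segal (pullback) hypothesis on $Z$ must be invoked. The composable pair $g, h$ in $\oc_{/x}$ corresponds to an element of $Z_4 = \oc_3$, and the two sides of the asserted equality become two ways of applying the top face maps $d_{n+2}$, i.e.\ iterated instances of $\varphi$. The plan is to recognize that the content of \eqref{OCA:fibercomp} for iterated fibers is exactly a commuting square of top face maps, and that this square is a pullback precisely because $Z$ is upper 2-Segal --- this is the same identification of squares already exploited in the proof of \cref{lem upper 2-segal char} and in \cref{thm udec}. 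Concretely, the equality $\varphi_0\varphi_1 = \varphi_0 d_\top$ at the relevant level (compare \cref{lem a5 general}) follows from the pullback square
\[ \begin{tikzcd}
Z_{n+2} \rar{d_0} \dar[swap]{d_{n+1}} & Z_{n+1} \dar{d_n} \\
Z_{n+1} \rar[swap]{d_0} & Z_n
\end{tikzcd} \]
and the Grothendieck--Segal identities for $\oc$, which let one commute face maps past one another. I would carry out the $n=2,3$ cases explicitly, since these are exactly what \eqref{OCA:fibercomp} encodes, reading the two fiber computations as the two composites around the square and concluding their equality from the pullback property together with functoriality of the lower face maps.
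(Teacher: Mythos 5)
Your overall structure---verifying the three axioms through the dictionary $\oc_n = Z_{n+1}$, $\varphi_n = d_{n+2}$, with the preceding lemma supplying the chosen local terminal objects---is exactly the paper's, and your computations for \eqref{OCA:idfiber} and the first half of \eqref{OCA:uniquemap} coincide with the ones given there. For the second half of \eqref{OCA:uniquemap} you should still exhibit an actual simplex representing the morphism $\tau_x \to \tau_y$ in $\udec(\oc)_1 = Z_3$: the paper takes $s_2\sigma$, checks $d_0 s_2\sigma = \tau_y$, $d_1 s_2 \sigma = \tau_x$, $d_2 s_2 \sigma = \sigma$, and then computes $\varphi_1(s_2\sigma) = d_3 s_2\sigma = \sigma$. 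This is mild, but it is a place where something must genuinely be produced rather than asserted to be ``analogous.''

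The real problem is your reading of \eqref{OCA:fibercomp}. You assert it is the only axiom where the upper 2-Segal (pullback) hypothesis must be invoked, and you plan to conclude the required equality ``from the pullback property.'' This has the logic backwards. Unwinding the dictionary, the equality needed (the conclusion of \cref{lem a5 general}) is $\varphi_{n-2}\varphi_{n-1} = \varphi_{n-2}\, d_n$, which reads $d_n d_{n+1} = d_n d_n$ as maps $Z_{n+1} \to Z_{n-1}$; this is the simplicial identity $d_i d_j = d_{j-1} d_i$ (with $i=n$, $j=n+1$), valid in \emph{any} simplicial set, and the paper's proof of this axiom is exactly that one line, followed by taking $n=2,3$. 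No pullback can serve as the engine here: a pullback square provides existence and uniqueness of simplices with prescribed faces, which is not the shape of the assertion (an equality of two parallel composites of face operators). Where the upper 2-Segal hypothesis actually enters is elsewhere: it makes $\oc = \udec(Z)$, and hence $\udec(\oc) = \udec\udec(Z)$, a category at all (\cref{lem upper 2-segal char}), so that composable pairs in $\oc_{/x}$ correspond to simplices of $Z$ and $\varphi$ is a functor; and it is used in the preceding lemma you cite, whose uniqueness argument needs the category structure. (Pullback squares pitting $\varphi$ against top face maps govern the blow-up axiom and \cref{thm udec}---a different statement, requiring \emph{lower} 2-Segality, which you should not expect to hold here.) If you carry out your promised explicit $n=2,3$ computations you will land on the simplicial identity above and the axiom will follow, so the plan is salvageable; but the mechanism you single out as the content-bearing step is not the one doing the work.
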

\begin{proof}
If $x \in \oc_0 = Z_1$ is an object, then $\id_x = s_0x \in \oc_1 = Z_2$ satisfies $\varphi_0(\id_x) = \varphi_0 s_0 x = d_2 s_0 x = s_0 d_1 x \in s_0(Z_0)$ is a chosen local terminal object. Hence \eqref{OCA:idfiber} holds.
Likewise, a map to a chosen local terminal $\tau_x \colon x \to u$ is $s_1x$ by the proof of the preceding lemma, so $\varphi_0(\tau_x) = \varphi_0 s_1 x = d_2 s_1 x = x$. 
This establishes the first part of \eqref{OCA:uniquemap}.
If $\sigma \in \oc_1 = Z_2$ is a morphism $x\to y$, then $s_2\sigma \in Z_3 = \oc_2 = \udec (\oc)_1$ represents the morphism $\sigma \colon \tau_x \to \tau_y$, since $d_0s_2\sigma = s_1d_0\sigma = s_1y = \tau_y$ and $d_1s_2\sigma = s_1 d_1\sigma = s_1 x = \tau_x$ and $d_2 s_2 \sigma = \sigma$.
\[ \begin{tikzcd}[column sep=small]
x \ar[dr, "\tau_x"'] \ar[rr,"\sigma"] && y \ar[dl,"\tau_y"] \\
& u
\end{tikzcd} \]
Taking fibers, we have $\varphi_1 (s_2\sigma) = d_3 s_2 \sigma = \sigma$, so the second part of \eqref{OCA:uniquemap} holds.
The conclusion of \cref{lem a5 general} holds since $\varphi_{n-2} \varphi_{n-1} = d_n d_{n+1} = d_n d_n = \varphi_{n-2} d_n$.
Taking $n=2,3$ yields \eqref{OCA:fibercomp}.
\end{proof}

\ifhs
\end{appendices}
\fi

\ifhs
\addcontentsline{toc}{section}{Acknowledgements}
\section*{Acknowledgements}
\else
\subsection*{Acknowledgements}
\fi

\ifhs
This work was supported by a grant from the Simons Foundation (\#850849). 
This material is partially based upon work supported by the National Science Foundation under Grant No. DMS-1928930 while the author participated in a program supported by the Mathematical Sciences Research Institute. 
The program was held in the summer of 2022 in partnership with the Universidad Nacional Autónoma de México.
\fi
I am grateful to Michael Batanin, Joachim Kock, Justin Lynd, and Martin Markl for fruitful discussions related to the topic of this paper.

\ifhs
\addcontentsline{toc}{section}{References}
\fi

\end{document}